\def\LT@start{%
\let\LT@start\endgraf
\endgraf\penalty\z@\vskip\LTpre
\dimen@\pagetotal
\advance\dimen@ \ht\ifvoid\LT@firsthead\LT@head\else\LT@firsthead\fi
\advance\dimen@ \dp\ifvoid\LT@firsthead\LT@head\else\LT@firsthead\fi
\advance\dimen@ \ht\LT@foot
\dimen@ii\vfuzz
\vfuzz\maxdimen
\setbox\tw@\copy\z@
\setbox\tw@\vsplit\tw@ to \ht\@arstrutbox
\setbox\tw@\vbox{\unvbox\tw@}%
\vfuzz\dimen@ii
\advance\dimen@ \ht
\ifdim\ht\@arstrutbox>\ht\tw@\@arstrutbox\else\tw@\fi
\advance\dimen@\dp
\ifdim\dp\@arstrutbox>\dp\tw@\@arstrutbox\else\tw@\fi
\advance\dimen@ -\pagegoal
\ifdim \dimen@>\z@\unskip\vfil\break\fi
\global\@colroom\@colht
\ifvoid\LT@foot\else
\advance\vsize-\ht\LT@foot
\global\advance\@colroom-\ht\LT@foot
\dimen@\pagegoal\advance\dimen@-\ht\LT@foot\pagegoal\dimen@
\maxdepth\z@
\fi
\ifvoid\LT@firsthead\copy\LT@head\else\box\LT@firsthead\fi
\output{\LT@output}}
\definecolor{dblue}{rgb}{0,0,0.70}
\subjclass[2020]{Primary 03E55; Secondary 03F05}
\newtheorem{theorem}{Theorem}[section]	
\newtheorem*{theorem*}{Theorem}
\newaliascnt{lemma}{theorem}
\newtheorem{lemma}[lemma]{Lemma}
\newtheorem*{lemma*}{Lemma}
\newaliascnt{proposition}{theorem}
\newtheorem{proposition}[proposition]{Proposition}
\newaliascnt{corollary}{theorem}
\newtheorem{corollary}[corollary]{Corollary}
\theoremstyle{remark}
\newaliascnt{remark}{theorem}
\newaliascnt{question}{theorem}
\newtheorem*{question*}{Question}
\newaliascnt{definition}{theorem}
\newtheorem{definition}[definition]{Definition}
\newaliascnt{example}{theorem}
\newaliascnt{convention}{theorem}
\def\l@subsection{\@tocline{2}{0pt}{1pc}{5pc}{}} \def\l@subsection{\@tocline{2}{0pt}{2pc}{6pc}{}}
\title{On Separating Wholeness Axioms}
\author{Hanul Jeon}
\email{ \href{mailto:hj344@cornell.edu}{hj344@cornell.edu}}
\urladdr{ \href{https://hanuljeon95.github.io}{https://hanuljeon95.github.io} }
\address{Department of Mathematics, Cornell University, Ithaca, NY 14853}
\mathchardef\mhyphen="2D
\newcommand{\restricts}{\upharpoonright}
\newcommand{\lr}{\leftrightarrow}
\DeclareRobustCommand\widecheck[1]{{\mathpalette\@widecheck{#1}}}
\def\@widecheck#1#2{%
    \setbox\z@\hbox{\m@th$#1#2$}%
    \setbox\tw@\hbox{\m@th$#1%
       \widehat{%
          \vrule\@width\z@\@height\ht\z@
          \vrule\@height\z@\@width\wd\z@}$}%
    \dp\tw@-\ht\z@
    \@tempdima\ht\z@ \advance\@tempdima2\ht\tw@ \divide\@tempdima\thr@@
    \setbox\tw@\hbox{%
       \raise\@tempdima\hbox{\scalebox{1}[-1]{\lower\@tempdima\box
\tw@}}}%
    {\ooalign{\box\tw@ \cr \box\z@}}}
\newcommand{\dom}{\operatorname{dom}}
\newcommand{\rank}{\operatorname{rank}}
\newcommand{\AC}{\mathsf{AC}}
\newcommand{\ZFC}{\mathsf{ZFC}}
\newcommand{\BTEE}{\mathsf{BTEE}}
\newcommand{\WA}{\mathsf{WA}}
\newcommand{\uWA}{\underline{\mathsf{WA}}}
\newcommand{\Sep}{\mathsf{Sep}}
\newcommand{\Con}{\mathsf{Con}}
\newcommand{\prejiter}{\mathsf{pre}j\mathsf{iter}}
\newcommand{\jiter}{j\mathsf{iter}}
\newcommand{\critseq}{\mathsf{critseq}}
\newcommand{\jht}{j\mhyphen\!\operatorname{ht}}
\newcommand{\uSigma}{\underline{\Sigma}}
\newcommand{\uPi}{\underline{\Pi}}
\newcommand{\uDelta}{\underline{\Delta}}
\newcommand{\sfj}{\mathsf{j}}
\newcommand{\bfG}{\mathbf{G}}
\begin{document}
\maketitle

\begin{abstract}
    In this paper, $\ZFC+\WA_{n+1}$ implies the consistency of $\ZFC+\WA_n$ for $n\ge 0$.
    We also prove that $\ZFC+\WA_n$ is finitely axiomatizable, and $\ZFC+\WA$ is not finitely axiomatizable.
\end{abstract}

\section{Introduction}

The Wholeness axiom, which was proposed by Corazza, is a weakening of the existence of a Reinhardt embedding $j\colon V\to V$ obtained by avoiding the Replacement schema for formulas with the elementary embedding $j$.
More precisely, the Wholeness axiom is formulated over the expanded language $\{\in , j\}$, the language with the membership relationship $\in$ and a unary function for an elementary embedding $j\colon V\to V$, and includes the Separation schema for the expanded language but Replacement schema restricted to formulas \emph{without} $j$.
Unlike that the existence of a Reinhardt embedding is inconsistent with $\AC$, the Wholeness axiom is compatible with $\AC$ modulo large cardinal axioms that are not known to incompatible with $\mathsf{ZFC}$.

Hamkins \cite{Hamkins2001WA} proposed a hierarchy on Wholeness axioms $\WA_n$ by restricting the Separation schema over the expanded language to $\Sigma^j_n$-formulas, $\Sigma_n$ formulas with the new symbol $j$.
He also proved that $\WA_0$ is compatible with $V=\mathrm{HOD}$ and that $\WA_0$ is not equivalent to any of $\WA_n$. 
Hamkins also suggested that each $\WA_n$ is not equivalent to $\WA_m$ for $n<m$, but it was open whether we can separate the Wholeness hierarchy in terms of consistency strength, and whether each $\ZFC+\WA_n$ is finitely axiomatizable.

The main goal of this paper is to establish the strength and the finite axiomatizability of $\WA_n$. In this paper, we will see the following results hold:

\begin{theorem*}[\ref{Corollary: ZFC WA0 consistency}, \ref{Corollary: ZFC WAn consistency}]
    $\ZFC+\WA_{n+1}$ proves the consistency of $\ZFC+\WA_n$ for $n\ge 0$.
\end{theorem*}

\begin{theorem*}[\ref{Corollary: Sect4-WAn finite}, \ref{Corollary: Sect4-WAnotfinite}]
    $\ZFC+\WA_n$ is finitely axiomatizable for each $n$. As a result, $\ZFC+\WA$ is not finitely axiomatizable.
\end{theorem*}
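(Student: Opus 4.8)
The plan is to handle the two halves separately; the failure of finite axiomatizability for $\ZFC+\WA$ will fall out of the consistency results together with G\"odel's theorem, so the substance lies in showing each $\ZFC+\WA_n$ is finitely axiomatizable, which I take up first. This theory has three schematic ingredients — elementarity of $j$ for $\in$-formulas, the $\Sigma^j_n$-Separation schema, and the ordinary $\ZFC$-Replacement schema for $\in$-formulas — and the goal is to collapse all of them into one $\{\in,j\}$-sentence. The bounded ingredients I would dispatch with truth predicates. Already $\WA_0$ proves that each $j\restricts V_\alpha$ is a set, since its graph is a $\Sigma^j_0$-definable subclass of $V_\alpha\times V_{j(\alpha)}$; consequently the truth of a $\Sigma^j_n$-formula at given parameters can be evaluated from a single set-sized chunk $j\restricts V_\gamma$ together with the usual $\Sigma_n$-satisfaction apparatus, which yields a $\ZFC+\WA_0$-definable partial truth predicate for $\Sigma^j_n$-formulas. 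Substituting this predicate into Separation collapses the entire $\Sigma^j_n$-Separation schema to a single ``universal instance,'' and, likewise, the $\ZFC$-definable $\Sigma_m$-truth predicate makes ``$j$ is $\Sigma_m$-elementary'' a single sentence for each fixed $m$.

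The remaining obstacle is $\ZFC$ itself, which is not finitely axiomatizable in the $\in$-language (by reflection and G\"odel's theorem); but the new function symbol circumvents this — the point being that the sentence we build involves $j$, and $\WA_n$ proves no reflection principle for $j$-formulas. Concretely, I would let $\sigma_n$ assert a fixed finite fragment $\ZFC_{n+c}$, the universal $\Sigma^j_n$-Separation instance, and ``$j\colon V\to V$ is nontrivial and $\Sigma_{n+c}$-elementary,'' for a suitable constant $c$, and then show by meta-induction on $m\ge n+c$ that $\sigma_n$ proves $\ZFC_{m+1}$ and the $\Sigma_{m+1}$-elementarity of $j$. In the inductive step the embedding manufactures reflecting structures: a putative failure of $\Sigma_{m+1}$-Replacement at a set $a$ is witnessed inside some $(V_\gamma,\in,j\restricts V_\gamma)$, whose image under $j$ is controlled by the fixed points $j^{(i)}(\operatorname{crit}(j))$ of the critical sequence, and $\Sigma^j_n$-Separation together with the truth predicate is precisely what lets one assemble the sets involved. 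I expect this inductive climb to be the main difficulty, and I expect the bottom of the induction to need extra care for small $n$ — especially $n=0$, where $\Sigma^j_0$-Separation may be too weak to carry the truth-predicate reduction and the embedding must supply the missing separation instances, mirroring the one-step versus two-step loss in the consistency results. It is essential that $\Sigma^j_n$ is bounded — this is what makes both the truth-predicate compression and the inductive climb possible — and indeed the construction must fail in the limit, since $\ZFC+\WA$ is not finitely axiomatizable, as we now show.

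The $\{\in,j\}$-formulas are exhausted by $\bigcup_n\Sigma^j_n$, so $\WA=\bigcup_n\WA_n$ and $\ZFC+\WA$ is the increasing union of the theories $\ZFC+\WA_n$. If $\ZFC+\WA$ were axiomatized by a single sentence $\tau$, then $\ZFC+\WA\vdash\tau$ would give, by compactness, $\ZFC+\WA_n\vdash\tau$ for some $n$; since conversely $\tau$ proves $\ZFC+\WA\supseteq\ZFC+\WA_{n+2}$, we would obtain $\ZFC+\WA_n\vdash\ZFC+\WA_{n+2}\vdash\Con(\ZFC+\WA_n)$, the last step by \autoref{Corollary: Sect4-WAnplus2 and WAn}. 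By G\"odel's second incompleteness theorem $\ZFC+\WA_n$, and hence $\ZFC+\WA$, would be inconsistent. Since an inconsistent theory is trivially finitely axiomatizable, this conclusion is to be read under the hypothesis $\Con(\ZFC+\WA)$ (which holds relative to a rank-into-rank axiom); under that hypothesis, $\ZFC+\WA$ is not finitely axiomatizable.
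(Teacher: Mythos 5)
Your argument is correct in outline and, for the non-finite-axiomatizability half, is literally the paper's proof: compactness reduces a putative finite axiomatization of $\ZFC+\WA$ to some $\ZFC+\WA_n$, which then proves $\WA_{n+2}$ and hence $\Con(\ZFC+\WA_n)$ by \autoref{Corollary: Sect4-WAnplus2 and WAn}, contradicting G\"odel's second incompleteness theorem; you are in fact slightly more careful than the paper in making the hypothesis $\Con(\ZFC+\WA)$ explicit. For the finite axiomatizability of $\ZFC+\WA_n$, the key idea is also the same --- compress the $\Sigma^j_n$-Separation schema into one sentence via a definable partial satisfaction predicate --- but you take a longer route on two points. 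First, the paper simply cites Hamkins for the finite axiomatizability of $\ZFC+\WA_0$ (which already packages the non-finitely-axiomatizable $\ZFC$ part and the elementarity schema), and only adds the single universal instance of Separation for the new predicate $\models_{\Sigma^j_n(\Sigma_\infty)}$; your meta-inductive bootstrap deriving $\ZFC_{m+1}$ and $\Sigma_{m+1}$-elementarity from a fixed fragment is essentially a re-proof of Hamkins's theorem, and it is indeed the delicate part you flag. Second, the paper builds its truth predicate not from set-sized chunks $j\restricts V_\gamma$ but from the reflection $V_{j^m(\kappa)}\prec V$ together with the $j$-height/degree bookkeeping of \autoref{Proposition: Sect2-Reflection0}, defining $\models_{\Delta^j_0(\Sigma_\infty)}$ first and then prefixing unbounded quantifiers to get $\models_{\Sigma^j_n(\Sigma_\infty)}$ at complexity exactly $\Sigma^j_n$; this complexity computation is the one point your sketch glosses over, and it matters, since the single ``universal Separation instance'' must itself be an instance of $\Sigma^j_n$-Separation to be provable in $\ZFC+\WA_n$ (also, defining the predicate already requires the totality of the critical sequence, i.e.\ $\Sigma^j_1$-Induction, so ``$\ZFC+\WA_0$-definable'' is an overstatement, though harmless for $n\ge 1$ since the paper likewise only handles $n\ge 1$ this way and falls back on Hamkins for $n=0$).
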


Let us describe the main idea of the proof. Let $j\colon V\to V$ be an elementary embedding witnessing the Wholeness axiom with the critical point $\kappa$.
The first step to show the above result is observing that $V_\kappa$ reflects all first-order formulas of $V$, and the critical sequence $\langle j^n(\kappa)\mid n<\omega\rangle$ is unbounded over the class of ordinals. (See \autoref{Proposition: Sect2-ConsequenceWA0}.)
Hence we may expect to define the truth predicate for specific formulas, and in fact, we can easily see that we can define the truth predicate for formulas without $j$. It boosts the idea that we may build a hierarchy of formulas by considering $\Sigma_\infty$, the class of all formulas without $j$, as atomic formulas, and build up the Levy-like hierarchy with $j$. We may expect the lowest level of the hierarchy, which we call $\Delta^j_0(\Sigma_\infty)$, would have the truth predicate. It turns out that this expectation actually holds.

Then how can we use the truth predicate to establish the consistency? The truth predicate we get is partial, and we cannot directly apply the soundness since it does not satisfy the full first-order logic.
Here we will use proof-theoretic techniques to make use of the partial truth predicate.
We will design sequent calculus $\bfG_{\WA_0}$ for $\ZFC+\WA_0$ without the Cut rule that also proves every consequence of $\ZFC+\WA_0$.
Hence if $\ZFC+\WA_0$ proves the contradiction, we have a cut-free proof of the empty sequent from $\bfG_{\WA_0}$. We can see that every formula in the cut-free proof of the empty sequent is $\Delta^j_0(\Sigma_\infty)$, and the induction argument over the proof shows the partial truth predicate satisfies the falsity, a contradiction.
We need $\ZFC+\WA_1$ (more precisely, $\Pi^j_1$-Induction) since we need to apply induction to a $\Pi^j_1$-sentence, which is impossible over $\ZFC+\WA_0$.
Proving the consistency of $\ZFC+\WA_n$ from $\ZFC+\WA_{n+1}$ follows a similar, but more convoluted argument.

\section*{Acknowledgement}
The author would be grateful for an anonymous referee for pointing out sloppy definitions and arguments and suggesting ideas for better consistency results. Without the reviewer's suggestion, the author could not prove the consistency of $\ZFC+\WA_n$ from $\ZFC+\WA_{n+1}$.
The author would also like to thank the author's advisor, Justin Moore for giving feedback on an earlier version of this paper.

\section{Preliminaries}

Let us clarify our notations before introducing preliminary notions. First-order theories are our central objects, and the definition of first-order expressions is standard. We assume that the formal definition of terms uses a fixed enumeration of variables $v_i$ for $i<\omega$, although we mostly informally denote them by $x$, $y$, $z$, etc. Also, we can always substitute quantified variables so that no quantifier uses the same variable in a single formula. Thus we always assume that each quantifier in a first-order formula bounds a uniquely corresponding variable.
For a function symbol $f$ and a natural number $n$, we use the notation $f^n$ to denote the $n$th iterate of $f$; so for example, $f^2(x)$ denotes the term $f(f(x))$.

We will also construct a `model' of a given first-order theory, and it requires defining substituting variables by elements in a domain. Finite tuples are functions whose domain is a finite ordinal. We denote a finite tuple of elements as vectors like $\vec{a}$. For a tuple of elements $\vec{a}$ and the $i$th variable $v_i$, $\vec{a}_{v_i}$ means the $i$th element of $\vec{a}$. If $i$ is larger than the length of $\vec{a}$, then we take $\vec{a}_{v_i}$ as the empty set. We usually use the notation $\vec{a}_x$ instead of $\vec{a}_{v_i}$, and the reader should understand $\vec{a}_x$ as $\vec{a}_{v_i}$ for some appropriate $i$ such that $x=v_i$.
Also, for a finite tuple $\vec{a}$ and a variable $v_i$, $\vec{a}[b/v_i]$ is a tuple obtained by replacing $i$th element of $\vec{a}$ to $b$. If $i$ is larger than the length of $\vec{a}$, then $\vec{a}[b/v_i]$ is a finite tuple of length $(i+1)$ obtained from $\vec{a}$ by appending enough number of empty sets and $b$ at its $i$th place. We will use the notation $\vec{a}[b/x]$ instead of $\vec{a}[b/v_i]$, and we should understand the former as the latter in a natural manner. Also, we do not distinguish variables and elements unless the distinction is necessary. That is, we write $\phi(a_0,\cdots,a_m)$ or $\phi(\vec{a})$ instead of $\phi(v_0,\cdots,v_m)[\vec{a}]$.
We use $\circ$ for an unknown binary connective of the first-order logic, and $\mathsf{Q}$ for an unspecified quantifier. Bounded quantifiers are often treated as formally distinct objects from unbounded quantifiers.

The distinction between an object theory and a metatheory is often important in this paper, but its distinction might be implicit. We sometimes call formulas in the object theory as codes for formulas to distinguish them from actual formulas over the metatheory. Also, we often call natural numbers on the metatheory as standard natural numbers. A useful rule-of-thumb to note is that natural numbers to denote the complexity of formulas (like $n$ in $\Sigma_n$) in this paper are always standard.

\subsection{Various hierarchies of formulas}
In this subsection, we define various types of hierarchy on the formula of set theory. Most set theorists work with the usual Levy hierarchy, with the following definition:
\begin{definition}
    $\Sigma_n$ and $\Pi_n$-formulas are defined recursively as follows:
    \begin{itemize}
        \item $\Sigma_0=\Pi_0$ are the class of formulas with no unbounded quantifiers.
        \item A formula is $\Pi_n$ if it is of the form  $\forall x \phi$, where $\phi$ is $\Sigma_{n-1}$.
        \item A formula is $\Sigma_n$ if it is of the form  $\exists x \phi$, where $\phi$ is $\Pi_{n-1}$.
    \end{itemize}
\end{definition}
For a given theory $T$ and a formula $\phi$, $\phi$ is \emph{$\Sigma_n$ or $\Pi_n$ over $T$}, or \emph{$T$-provably $\Sigma_n$ or $\Pi_n$} if $T$ proves $\phi$ is equivalent to some $\Sigma_n$ or $\Pi_n$ formula respectively.
If $T$ proves $\phi$ is equivalent to both of a $\Sigma_n$ and a $\Pi_n$ formula, then we say $T$ is \emph{$\Delta_n$ over $T$.}
We omit $T$ and simply say a formula is $\Sigma_n$, $\Pi_n$, or $\Delta_n$ if the context is clear. Also, we call $\Sigma_0$-formulas as $\Delta_0$-formulas for convenience.

It is well-known folklore that repeating the same type of quantifiers does not affect being $\Sigma_n$ or $\Pi_n$; For example, for a bounded formula $\phi(x,y)$, $\exists x\exists y\phi(x,y)$ is equivalent to a $\Sigma_1$-formula although it is not literally $\Sigma_1$.
Let us formulate this folklore into a solid language for future use. The point of the subsequent lemmas is that they are provable over very weak subsystem of $\mathsf{ZFC}$, namely, \emph{bounded Zermelo set theory without powerset} $\mathsf{Z}_0^-$, comprising Extensionality, Foundation, Pairing, Union, Infinity, and Separation for bounded formulas.
The following results are presented in \cite{FriedmanLiWong2016KPalpharec}, and similar proofs for them appear in \cite{JeonWalshOrdinalAnalysis}, but we reproduce the results and their proofs for future use.

\begin{lemma}[$\mathsf{Z}_0^-$] \label{Lemma: Sect2-Basics}
    The following formulas are all equivalent to a $\Delta_0$-formula:
    \begin{enumerate}
        \item $z=\{x,y\}$.
        \item $\{x,y\}\in z$.
        \item $z=\langle x,y\rangle$, where $\langle x,y\rangle = \{\{x\},\{x,y\}\}$ is a Kuratowski ordered pair.
        \item $z = \bigcup x$.
        \item $z = x\cup y$.
        \item $z = \varnothing$.
    \end{enumerate}
    Furthermore, the Kuratowski definition of ordered pair is correct in the following sense: If $\langle x_0,x_1\rangle = \langle y_0,y_1\rangle$, then $x_0=y_0$ and $x_1=y_1$.
\end{lemma}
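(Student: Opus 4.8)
The plan is to prove each clause by exhibiting an explicit $\Delta_0$-formula and checking the equivalence. The only real content is the careful bounding of quantifiers, for which I would use over and over the simple observation that any auxiliary set one needs in order to build a pair or a Kuratowski pair is forced to be an element of the set being described, so the existential quantifier naming it can be bounded by that set.

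First I would dispatch the cases that are essentially pure logic. For (1), $z=\{x,y\}$ is logically equivalent to
\[
x\in z\ \wedge\ y\in z\ \wedge\ \forall w\in z\,(w=x\vee w=y),
\]
which is $\Delta_0$. Case (6), $z=\varnothing$, is equivalent to $\forall w\in z\,(w\neq w)$; case (4), $z=\bigcup x$, to
\[
\forall w\in z\,\exists y\in x\,(w\in y)\ \wedge\ \forall y\in x\,\forall w\in y\,(w\in z);
\]
and case (5), $z=x\cup y$, to $\forall w\in z\,(w\in x\vee w\in y)\wedge\forall w\in x\,(w\in z)\wedge\forall w\in y\,(w\in z)$. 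All of these are $\Delta_0$, and none of the equivalences uses more than first-order logic. What $\mathsf{Z}_0^-$ buys us is that Pairing and Union guarantee the terms $\{x,y\}$, $\langle x,y\rangle$, $\bigcup x$, $x\cup y$, $\varnothing$ denote, while Extensionality guarantees they denote uniquely; this is what lets us pass freely, inside any formula, between $\phi(t)$, $\exists z\,(z=t\wedge\phi(z))$, and $\forall z\,(z=t\to\phi(z))$ for these terms $t$, which is how the lemma will actually get used later.

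Next I would reduce (2) and (3) to (1). For (2), $\{x,y\}\in z$ holds iff $\exists w\in z\,(w=\{x,y\})$, and by (1) the matrix is (equivalent to) a $\Delta_0$-formula, so the whole thing is $\Delta_0$. For (3), the identity $z=\{\{x\},\{x,y\}\}$ forces $\{x\}\in z$ and $\{x,y\}\in z$, so $z=\langle x,y\rangle$ is equivalent to
\[
\exists a\in z\,\exists b\in z\,\bigl[\,a=\{x,x\}\ \wedge\ b=\{x,y\}\ \wedge\ \forall c\in z\,(c=a\vee c=b)\,\bigr],
\]
which is $\Delta_0$ by (1) (using $y:=x$ for the first conjunct). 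The point is that each layer of pairing one peels off lands inside $z$, so the quantifiers introduced to name the intermediate sets are all boundable by $z$; the same idea would handle deeper nestings if needed.

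Finally, for the ``furthermore'' clause I would argue directly from Extensionality and Pairing, splitting on whether $x_0=x_1$. If $x_0=x_1$ then $\langle x_0,x_1\rangle=\{\{x_0\}\}$, and from $\{\{y_0\},\{y_0,y_1\}\}=\{\{x_0\}\}$ one reads off $\{y_0\}=\{x_0\}$ and $\{y_0,y_1\}=\{x_0\}$, hence $y_0=y_1=x_0$. If $x_0\neq x_1$ then $\{x_0\}\neq\{x_0,x_1\}$, so $\langle x_0,x_1\rangle$ genuinely has two elements; since $\{x_0\}$ is one of the two elements of $\langle y_0,y_1\rangle$ and it cannot equal $\{y_0,y_1\}$ without collapsing $\langle y_0,y_1\rangle$ to a singleton, we get $\{x_0\}=\{y_0\}$, so $x_0=y_0$, and then $\{x_0,x_1\}=\{x_0,y_1\}$ together with $x_1\neq x_0$ forces $x_1=y_1$. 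I expect this last case analysis — keeping straight which identifications of singletons with doubletons are and are not possible — to be the only step requiring genuine care; everything else is a mechanical unwinding of definitions.
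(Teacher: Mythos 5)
Your proposal is correct and follows essentially the same route as the paper: the explicit $\Delta_0$-formulas you exhibit for clauses (1)--(6) are the ones the paper writes down (with your clause (3) merely bounding the auxiliary existentials by $z$ explicitly rather than citing clause (2)), and your verification of Kuratowski correctness differs only in organizing the case split around $x_0=x_1$ versus $x_0\neq x_1$ instead of around which of the identifications $\{x_0\}=\{y_0,y_1\}$ or $\{x_0,x_1\}=\{y_0\}$ hold. Both case analyses are sound and reach the same conclusion.
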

\begin{proof}
    We can see that the following equivalences hold:
    \begin{enumerate}
        \item $z = \{x,y\} \iff (x\in z \land y\in z) \land \forall w\in z(w=x\lor w=y)$.
        \item $\{x,y\}\in z \iff \exists w\in z (w = \{x,y\})$.
        \item $z = \langle x,y\rangle \iff (\{x\}\in z\land \{x,y\}\in z)\land \forall w\in z (w = \{x\} \lor w = \{x,y\})$.
        \item $z=\bigcup x \iff [\forall w\in z\exists y\in x (w\in y)] \land [\forall y\in x\forall w\in y (w\in z)]$.
        \item $z=x\cup y \iff [\forall w\in x(w\in z)\land \forall w\in y(w\in z)] \land [\forall w\in z(z\in x \lor z\in y)]$.
        \item $z=\varnothing \iff \forall w\in z (w\neq w)$.
    \end{enumerate}
    Now let us turn our view to the correctness of Kuratowski ordered pair.
    Suppose that $\langle x_0,x_1\rangle = \langle y_0,y_1\rangle$, that is, 
    \begin{equation*}
        \{\{x_0\},\{x_0,x_1\}\} = \{\{y_0\},\{y_0,y_1\}\}.
    \end{equation*}
    Then we have $\{x_0\}=\{y_0\}\lor \{x_0\}=\{y_0,y_1\}$.
    Similarly, we have  $\{x_0,x_1\}=\{y_0\}\lor \{x_0,x_1\}=\{y_0,y_1\}$. Based on this, let us divide the cases:
    \begin{itemize}
        \item If $\{x_0\} = \{y_0,y_1\}$, then $x_0=y_0=y_1$. Thus we get $\{x_0,x_1\}=\{y_0\}$, so $x_1=y_0$. In sum, $x_0=x_1=y_0=y_1$.

        \item If $\{x_0,x_1\}=\{y_0\}$, then by an argument similar to the previous one, we have $x_0=x_1=y_0=y_1$.

        \item Now consider the case $\{x_0\}=\{y_0\}$ and $\{x_0,x_1\}=\{y_0,y_1\}$. Then we have $x_0=y_0$, so $\{x_0,x_1\}=\{x_0,y_1\}$. Hence $x_1\in \{x_0,y_1\}$, so either $x_1=x_0$ or $x_1=y_1$.
        But if $x_1=x_0$, then from $y_1\in \{x_0,x_1\}$ we get $x_1=y_1$. \qedhere
    \end{itemize}
\end{proof}

\begin{lemma}[$\mathsf{Z}_0^-$] \label{Lemma: Sect2-orderedpair-property}
    There are $\Delta_0$-definable functions $\pi_0$ and $\pi_1$ such that $\pi_0(\langle x,y\rangle)=x$ and $\pi_1(\langle x,y\rangle)=y$.
    
    Furthermore, if $\phi(x)$ is a $\Delta_0$-formula, then so is $\phi(\langle x,y\rangle)$, $\phi(\pi_0(x))$, and $\phi(\pi_1(x))$.
\end{lemma}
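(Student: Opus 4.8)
The plan is to give explicit $\Delta_0$ definitions of the graphs of $\pi_0$ and $\pi_1$, and then to establish the substitution claim by an induction on formulas that treats all the relevant terms simultaneously.

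For $\pi_0$, I would let ``$w=\pi_0(z)$'' abbreviate
\[
\big[\,\theta(z)\land\forall u\in z\,(w\in u)\,\big]\ \lor\ \big[\,\lnot\theta(z)\land w=\varnothing\,\big],
\]
where $\theta(z)$, expressing ``$z$ is an ordered pair'', is $\exists u\in z\,\exists a\in u\,\exists u'\in z\,\exists b\in u'\,(z=\langle a,b\rangle)$. This is $\Delta_0$: ``$z=\langle a,b\rangle$'' is $\Delta_0$ by \autoref{Lemma: Sect2-Basics}(3), and the witnessing quantifiers in $\theta$ are legitimately bounded since $a,b\in\bigcup z$ whenever $z=\langle a,b\rangle$. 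Working in $\mathsf{Z}_0^-$ one checks that $\bigcap\langle a,b\rangle=\{a\}$, so the displayed formula defines a total function with $\pi_0(\langle a,b\rangle)=a$; well-definedness on genuine pairs is exactly the correctness of the Kuratowski pair from \autoref{Lemma: Sect2-Basics}. For $\pi_1$ I would proceed the same way, using that when $z=\langle a,b\rangle$ the value $b$ is the unique member of $\bigcup z$ not lying in every element of $z$ in case $a\neq b$, and equals $\pi_0(z)$ in case $a=b$; the auxiliary conditions ``$w\in\bigcup z$'', ``$w\in\bigcap z$'' and ``some member of $\bigcup z$ fails to lie in $\bigcap z$'' all unfold to bounded formulas, so the graph of $\pi_1$ is again $\Delta_0$, with totality and functionality as before.

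For the ``furthermore'' part, I would isolate two properties of a term $t$: (a) ``$w=t$'' is $\Delta_0$, and (b) ``$w\in t$'' is $\Delta_0$. Besides bare variables, the terms $\{x\}$, $\{x,y\}$, $\bigcup x$, $\langle x,y\rangle$, $\pi_0(x)$, $\pi_1(x)$ all satisfy (a) and (b): (a) comes from \autoref{Lemma: Sect2-Basics} or from the previous paragraph, and (b) is obtained by bounding the witness — for instance ``$w\in\pi_0(x)$'' is equivalent to $\exists u\in x\,\exists y\in u\,(y=\pi_0(x)\land w\in y)$, since $\pi_0(x)\in\bigcup x$ when $x$ is a pair and $\pi_0(x)=\varnothing$ otherwise. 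I would then prove, by induction on the $\Delta_0$ formula $\phi$, that $\phi(t)$ is $\Delta_0$ for every admissible term $t$ at once. Atomic formulas reduce to (a), (b), or ``$t\in y\iff\exists w\in y\,(w=t)$''; propositional connectives are immediate; a bounded quantifier $\mathsf{Q}v\in y\,\psi$ whose bound $y$ is not the substituted variable is handled by applying the inductive hypothesis to $\psi$. The one case needing care is $\mathsf{Q}v\in w\,\psi(v)$ with $t$ put for $w$: here one rewrites the range using the shape of $t$, namely $\exists v\in\langle x,y\rangle\,\psi(v)$ becomes $\psi(\{x\})\lor\psi(\{x,y\})$, $\exists v\in\bigcup x\,\psi(v)$ becomes $\exists u\in x\,\exists v\in u\,\psi(v)$, and $\exists v\in\pi_0(x)\,\psi(v)$ becomes $\exists u\in x\,\exists u'\in u\,\exists v\in u'\,(v\in\pi_0(x)\land\psi(v))$ because every member of $\pi_0(x)$ lies in $\bigcup\bigcup x$ (and likewise for $\pi_1(x)$); in each subcase the inductive hypothesis for the proper subformula $\psi$ applies, now to the still-admissible terms $\{x\}$, $\{x,y\}$, $x$, $y$, or just to $\psi$ itself.

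The main obstacle is precisely this last case: one must check that substituting a term for the bounding variable of a bounded quantifier still leaves something bounded, and that the unfolding terminates. What makes it terminate is that the displayed set of admissible terms is closed under the operation ``pass to the terms appearing when the range $v\in t$ is re-expressed'', so organising the induction on $\phi$ with all admissible $t$ quantified simultaneously — and with $\psi$ always a proper subformula — closes the loop. Everything else is routine unwinding with \autoref{Lemma: Sect2-Basics}.
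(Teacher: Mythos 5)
Your proposal is correct and follows essentially the same route as the paper: explicit $\Delta_0$ definitions of the graphs of $\pi_0$ and $\pi_1$ built from \autoref{Lemma: Sect2-Basics} (your case-split functional-graph formulation versus the paper's $\pi_0(x)=\bigcup\{u\in\bigcup x\mid\forall v\in x(u\in v)\}$ is only a cosmetic difference), followed by induction on bounded formulas for the substitution claim. The only divergence is that you carry out in detail the term-in-the-bound case of that induction, which the paper dispatches in one line by appeal to the projections being rudimentary, hence simple; your unfolding of $\mathsf{Q}v\in t\,\psi$ for each admissible $t$ is a correct way to fill that in.
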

\begin{proof}
    Consider the following functions:
    \begin{itemize}
        \item $\pi_0(x)=\bigcup\{ u\in\bigcup x\mid \forall v\in x (u\in v)\}$.
        \item $\pi_1(x) = \bigcup\{u\in \bigcup x \mid \exists v\in \bigcup x \forall p\in x [p=\{v\}\lor p=\{u,v\}]\}$.
    \end{itemize}
    Clearly, $\mathsf{Z}_0^-$ proves the above functions are well-defined. Moreover, we can see that the formulas $\pi_0(x)=y$ and $\pi_1(x)=y$ are equivalent to $\Delta_0$-formulas: First, we can see
    \begin{align*}
        z\in \pi_0(x) &\iff \exists w [y\in w_0 \land w_0\in \mathop{\textstyle \bigcup} x \land \forall v\in x(y\in v)] \\ 
        &\iff \exists w [y\in w_0 \land (\exists w_1\in w_0 (w_0\in w_1)) \land \forall v\in x(y\in v)] \\
        &\iff \exists w_1 \in x\exists w_0\in w_1 (y\in w\land\forall v\in x (y\in v)),
    \end{align*}
    which is $\Delta_0$, and we can see
    \begin{align*}
        \pi_0(x)\subseteq y &\iff \forall z (z\in \pi_0(x)\to z\in y) \\
        &\iff \forall z \left[ [\exists w_1 \in x\exists w_0\in w_1 (y\in w\land\forall v\in x (y\in v))]\to z\in y\right]\\
        &\iff \forall z\forall w_1\in x\forall w_0\in w_1 [(z\in w_0\land \forall v\in x(z\in v))\to z\in y] \\
        &\iff \forall w_1\in x\forall w_0\in w_1\forall z\in w_0 [\forall v\in x(z\in v))\to z\in y].
    \end{align*}
    and 
    \begin{equation*}
        \pi_0(x)=y \iff \pi_0(x)\subseteq y \land \forall z\in y(z\in \pi_0(x)),
    \end{equation*}
    so $\pi_0(x)=y$ is equivalent to a $\Delta_0$-formula.
    In a similar way, but with a longer proof, we can see that $\pi_1(x)=y$ is equivalent to a $\Delta_0$-formula.

    The second part follows from induction on a bounded formula $\phi$. We may also prove it by using the fact that ordered pair functions and projections $\pi_0$, $\pi_1$ are rudimentary, hence simple. (See \cite{Jensen1972Fine}). 
\end{proof}

\begin{lemma}[$\mathsf{Z}_0^-$]\label{Lemma: Sect2-TwoUniversalReducing}
    Let $\phi_0(x_0,x_1,v_0,\cdots, v_{n-1})$ be a $\Delta_0$-formula. Then $\phi_0(\pi_0(x),\pi_1(x),v_0,\cdots, v_{n-1})$ is also $\Delta_0$ and 
    \begin{equation*}
        \forall x_0\forall x_1\exists v_0\cdots\mathsf{Q} v_{n-1} \phi_0(x_0,x_1,v_0,\cdots, v_{n-1}) \lr \forall x \exists v_0\cdots\mathsf{Q} v_{n-1} \phi_0(\pi_0(x),\pi_1(x),v_0,\cdots, v_{n-1})
    \end{equation*}
\end{lemma}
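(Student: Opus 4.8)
The plan is to separate the statement into its two halves — the $\Delta_0$-classification and the biconditional — and to reduce the first to \autoref{Lemma: Sect2-orderedpair-property} and the second to Pairing together with the correctness of the Kuratowski pair.

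For the complexity claim, I would obtain $\phi_0(\pi_0(x),\pi_1(x),v_0,\dots,v_{n-1})$ from $\phi_0(x_0,x_1,v_0,\dots,v_{n-1})$ by two successive substitutions, invoking the ``furthermore'' clause of \autoref{Lemma: Sect2-orderedpair-property} each time. First, substituting the term $\pi_0(x_0)$ for $x_0$ gives a $\Delta_0$-formula, and renaming the free variable $x_0$ to $x$ keeps it $\Delta_0$; then substituting $\pi_1(x_1)$ for $x_1$ gives a $\Delta_0$-formula, and renaming $x_1$ to $x$ — which merely identifies two free variables and does not touch the quantifier structure — again keeps it $\Delta_0$. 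Hence $\phi_0(\pi_0(x),\pi_1(x),\vec v)$ is $\Delta_0$, and consequently $\exists v_0\cdots\mathsf{Q} v_{n-1}\,\phi_0(\pi_0(x),\pi_1(x),\vec v)$ sits at exactly the same level of the Levy hierarchy as $\exists v_0\cdots\mathsf{Q} v_{n-1}\,\phi_0(x_0,x_1,\vec v)$, so that the two displayed formulas really do differ only by one leading universal quantifier.

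For the biconditional, I would abbreviate $\theta(x_0,x_1):=\exists v_0\cdots\mathsf{Q} v_{n-1}\,\phi_0(x_0,x_1,\vec v)$, choosing the $v_i$ disjoint from $x$ so that substitution is legitimate; the claim becomes $\forall x_0\forall x_1\,\theta(x_0,x_1)\lr\forall x\,\theta(\pi_0(x),\pi_1(x))$. Left to right: working in $\mathsf{Z}_0^-$, assume the left side, fix $x$, and apply the hypothesis to $x_0:=\pi_0(x)$ and $x_1:=\pi_1(x)$, which are sets because the projection functions are total by \autoref{Lemma: Sect2-orderedpair-property}. Right to left: assume the right side, fix $x_0,x_1$, form $x:=\langle x_0,x_1\rangle=\{\{x_0\},\{x_0,x_1\}\}$ by three applications of Pairing, and use the correctness of the Kuratowski pair from \autoref{Lemma: Sect2-Basics} (together with the definitions of the projections in \autoref{Lemma: Sect2-orderedpair-property}) to conclude $\pi_0(x)=x_0$ and $\pi_1(x)=x_1$; then $\theta(\pi_0(x),\pi_1(x))$ is literally $\theta(x_0,x_1)$, which holds.

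I do not anticipate a real obstacle: the mathematical content is the familiar folklore that a leading block of like quantifiers can be amalgamated, and the only points requiring care are that this survives in the weak base theory $\mathsf{Z}_0^-$ — namely that Pairing alone suffices to build the Kuratowski pair, that $\pi_0,\pi_1$ are total and $\Delta_0$-definable (precisely \autoref{Lemma: Sect2-orderedpair-property}), and that the iterated term substitution is carried out on suitably renamed variables so that no capture occurs and $\Delta_0$-ness is preserved at each step.
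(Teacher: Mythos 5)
Your proposal is correct and follows essentially the same route as the paper, which simply cites \autoref{Lemma: Sect2-orderedpair-property} for the $\Delta_0$-preservation and leaves the biconditional as "easy to prove"; your left-to-right instantiation at $\pi_0(x),\pi_1(x)$ and right-to-left construction of $\langle x_0,x_1\rangle$ via Pairing plus the correctness of the Kuratowski pair is exactly the intended elementary argument.
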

\begin{proof}
    The first part follows from \autoref{Lemma: Sect2-orderedpair-property}. The last part is easy to prove.
\end{proof}

From the previous lemma, we get the following:
\begin{lemma}[$\mathsf{Z}_0^-$]\label{Corollary: Sect2-PrenexFormReducing} \pushQED{\qed}
    If $\phi$ and $\psi$ are $\Sigma_n$, then $\exists x\phi(x)$, $\phi\land\psi$, $\phi\lor\psi$ are also equivalent to a $\Sigma_n$-formula. 
    
    Similarly, if $\phi$ and $\psi$ are $\Pi_n$, then $\forall x\phi(x)$, $\phi\land\psi$, $\phi\lor\psi$ are also equivalent to a $\Pi_n$ formula.
\end{lemma}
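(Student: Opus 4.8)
The plan is to run a simultaneous induction on $n$ for the $\Sigma_n$ and the $\Pi_n$ statements (either follows from the other by driving negations inward, but carrying both is cleaner). The starting point is the observation that, unwinding the definition, every \emph{literal} $\Sigma_n$ formula has the single-alternating prenex shape $\exists x_1\forall x_2\exists x_3\cdots\mathsf{Q}x_n\,\theta$ with $\theta$ a $\Delta_0$ matrix, and dually every $\Pi_n$ formula has the shape $\forall x_1\exists x_2\cdots\mathsf{Q}x_n\,\theta$. So in every case it suffices to (i) rewrite the combination in question as a prenex formula with a $\Delta_0$ matrix in which each alternation block contains at most two quantifiers, and then (ii) collapse every block to a single quantifier. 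Since $\phi$ and $\psi$ need only be \emph{equivalent} (over $\mathsf{Z}_0^-$) to such formulas, we may assume they are literally of this shape.

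For step (i): first rename bound variables so that no two quantifiers occurring in $\phi$, $\psi$ (and the extra $\exists x$) bind the same variable, and no bound variable of one of $\phi,\psi$ is free in the other. Then apply the purely logical prenexing equivalences $\mathsf{Q}y(\alpha\circ\beta)\lr(\mathsf{Q}y\,\alpha)\circ\beta$ for $\circ\in\{\land,\lor\}$ and $y\notin\mathrm{FV}(\beta)$, repeatedly, to pull all quantifiers to the front, interleaving the prefixes of $\phi$ and $\psi$ block by block so that equally-typed quantifiers become adjacent. For $\exists x\,\phi$ with $\phi$ literally $\Sigma_n$ and $n\ge1$, the new $\exists x$ simply joins the leading $\exists$ of $\phi$. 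In all three cases the resulting matrix is a Boolean combination of $\Delta_0$ matrices, hence $\Delta_0$, and the prefix is $\exists\vec{x}_1\forall\vec{x}_2\cdots\mathsf{Q}\vec{x}_n$ with each $\vec{x}_i$ of length $\le2$ (exactly the hypothesis allowed by \autoref{Lemma: Sect2-TwoUniversalReducing}).

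For step (ii): collapse a $\forall\forall$ block using \autoref{Lemma: Sect2-TwoUniversalReducing} directly, and an $\exists\exists$ block using its existential dual, which one gets by negating, applying \autoref{Lemma: Sect2-TwoUniversalReducing}, and negating back (the negation of a prenex formula with $\Delta_0$ matrix is again such a formula). Concretely, $\forall x_i\forall y_i\,\chi\lr\forall z_i\,\chi(\pi_0(z_i),\pi_1(z_i))$, and dually for $\exists$, where $\chi$ is the prenex tail with $\Delta_0$ matrix; the crucial point is the ``furthermore'' clause of \autoref{Lemma: Sect2-orderedpair-property}, which guarantees that after the substitution $x_i\mapsto\pi_0(z_i)$, $y_i\mapsto\pi_1(z_i)$ the matrix is still $\Delta_0$, while the quantifier structure of $\chi$ is untouched. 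Applying this to the blocks from innermost to outermost, and using replacement of provably equivalent subformulas to perform an inner collapse underneath the outer quantifiers, turns the prefix into single alternating quantifiers over a $\Delta_0$ matrix, i.e.\ a genuine $\Sigma_n$ formula. The $\Pi_n$ assertions are proved by the mirror-image argument (or deduced from the $\Sigma_n$ ones by the negation bookkeeping already mentioned).

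I do not expect a genuine obstacle here; the work is bookkeeping. The points to be careful about are: doing the bound-variable renaming \emph{before} prenexing, so that the quantifier-pulling equivalences are applicable; checking $\Delta_0$-closure both under the Boolean combination of matrices and — the less trivial half — under the projection substitutions, which is exactly \autoref{Lemma: Sect2-orderedpair-property}; and correctly formulating and justifying the existential dual of \autoref{Lemma: Sect2-TwoUniversalReducing}. Everything used is either pure first-order logic or a $\Delta_0$-preservation fact already established over $\mathsf{Z}_0^-$, so the whole argument is available in $\mathsf{Z}_0^-$. (For $n=0$ the $\exists x$ clause is vacuous, since $\exists x\,\phi$ with $\phi\in\Delta_0$ is $\Sigma_1$, while the $\land,\lor$ clauses are immediate from the definition of $\Sigma_0$; iterating the lemma also recovers the general folklore that a block of same-typed quantifiers in front of a $\Sigma_n$, resp.\ $\Pi_n$, formula can be replaced by a single quantifier.)
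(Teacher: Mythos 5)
Your proposal is correct and follows essentially the same route as the paper: rename bound variables, prenex and interleave the quantifier prefixes so that like quantifiers become adjacent, then contract each doubled block from the inside out via the Kuratowski pairing functions of \autoref{Lemma: Sect2-TwoUniversalReducing} (and its existential dual), with \autoref{Lemma: Sect2-orderedpair-property} guaranteeing the matrix stays $\Delta_0$. You are merely more explicit than the paper about the bookkeeping (the negation argument for the $\exists\exists$ dual and the replacement of equivalent subformulas under outer quantifiers), which the paper leaves implicit.
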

\begin{proof}
    Let us only consider the case when $\phi$ and $\psi$ are $\Pi_n$. The case for $\Sigma_n$ is symmetric.

    We first claim that if $\phi(x)$ is $\Pi_n$, then $\forall x\phi(x)$ is equivalent to a $\Pi_n$-formula.
    Suppose that $\phi(x)$ is of the form 
    \begin{equation*}
        \forall v_0\exists v_1\cdots\mathsf{Q} v_{n-1} \phi_0(x,v_0,\cdots,v_{n-1})
    \end{equation*}
    for some $\Delta_0$-formula $\phi_0$.
    By \autoref{Lemma: Sect2-TwoUniversalReducing}, we have
    \begin{equation*}
        \forall x\forall v_0\exists v_1\cdots\mathsf{Q} v_{n-1} \phi_0(x,v_0,v_1,\cdots, v_{n-1}) \lr \forall x \exists v_1\cdots\mathsf{Q} v_{n-1} \phi_0(\pi_0(x),\pi_1(x),v_1,\cdots, v_{n-1}).
    \end{equation*}

    Next, let us prove that $\phi\land\psi$ is equivalent to a $\Pi_n$-formula.  The case for $\phi\lor\psi$ is analogous.
    Since $\phi$ and $\psi$ are $\Pi_n$, we have $\Delta_0$-formulas $\phi_0$ and $\psi_0$ such that
    \begin{equation} \label{Formula: Sect2-Phi-Pin}
        \phi(\vec{x}) \equiv \forall u_0 \exists  u_1 \cdots \mathsf{Q} u_{n-1} \phi_0(\vec{x}, u_0,u_1,\cdots, u_{n-1})
    \end{equation}
    and
    \begin{equation} \label{Formula: Sect2-Psi-Pin}
        \psi(\vec{x}) \equiv \forall v_0 \exists v_1 \cdots \mathsf{Q} v_{n-1} \psi_0(\vec{x}, v_0,v_1,\cdots, v_{n-1}).
    \end{equation}
    We may assume that all of $u_i$ and $v_i$ are different variables by substitution, so by predicate-logic level computation, we have
    \begin{equation*}
        \phi(\vec{x})\land\psi(\vec{x}) \iff \\
        \forall u_0\forall v_0 \exists u_1\exists v_1\cdots \mathsf{Q}u_{n-1}\mathsf{Q}v_{n-1}[\phi_0(\vec{x}, u_0,u_1,\cdots, u_{n-1})\land \psi_0(\vec{x},, v_0,v_1,\cdots, v_{n-1})].
    \end{equation*}
    Now inductively reduce duplicated quantifiers from the inside to the outside by applying \autoref{Lemma: Sect2-TwoUniversalReducing}.
    As a result, we can see that $\phi\land\psi$ is equivalent to
    \begin{equation*}
        \forall w_0\exists w_1\cdots \mathsf{Q} w_{n-1} [\phi_0(\vec{x}, \pi_0(w_0), \pi_0(w_1),\cdots, \pi_0(w_{n-1}))\land \psi_0(\vec{x}, \pi_1(w_0), \pi_1(w_1),\cdots, \pi_1(w_{n-1}))]. \qedhere 
    \end{equation*}
\end{proof}

Observe that the above proof works uniformly on formulas, so we can see the following meta-corollary holds:
\begin{corollary}\label{Corollary: Sect2-Effective-normalization} \pushQED{\qed}
    Suppose that $\phi(x)$ and $\psi(x)$ are $\Sigma_n$-formulas.
    Then we can effectively decide $\Sigma_n$-formulas $D_{\exists x\phi}$, $D_{\phi\land\psi}$, and $D_{\phi\lor\psi}$ such that $\mathsf{Z}^-_0$ proves $\exists x\phi(x)\lr D_{\exists x\phi(x)}$, $\phi\land\psi\lr D_{\phi\land\psi}$, and $\phi\lor\psi\lr D_{\phi\lor\psi}$.
    
    Similarly, if $\phi(x)$ and $\psi(x)$ are $\Pi_n$-formulas, then we can effectively decide $\Pi_n$-formulas $D_{\forall x\phi}$, $D_{\phi\land\psi}$, and $D_{\phi\lor\psi}$ such that $\mathsf{Z}^-_0$ proves $\forall x\phi(x)\lr D_{\forall x\phi(x)}$, $\phi\land\psi\lr D_{\phi\land\psi}$, and $\phi\lor\psi\lr D_{\phi\lor\psi}$.
    \qedhere 
\end{corollary}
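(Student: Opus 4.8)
The plan is to observe that nothing genuinely new has to be proved: the algorithms asked for can simply be read off from the proof of \autoref{Corollary: Sect2-PrenexFormReducing}, which was already written constructively, and the sentence preceding the corollary (``the above proof works uniformly on formulas'') is morally the whole argument. First I would fix a reasonable Gödel coding of first-order formulas and make precise what ``effectively decide'' is to mean, namely that $D_{\exists x\phi}$, $D_{\phi\land\psi}$, $D_{\phi\lor\psi}$ and their $\Pi_n$ counterparts are values of a primitive recursive (indeed very simple) function applied to the codes of $\phi$ and $\psi$ together with the data exhibiting that $\phi,\psi$ are in $\Sigma_n$ (resp.\ $\Pi_n$) prenex form.

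Next I would walk through the three constructions and record that each is a finite composition of effective syntactic operations. For $\exists x\phi$ with $\phi\equiv\exists v_0\forall v_1\cdots\mathsf{Q}v_{n-1}\phi_0$: rename bound variables so $x$ does not clash, note the two leading existential quantifiers, and collapse them by the $\Sigma_n$-analogue of \autoref{Lemma: Sect2-TwoUniversalReducing}, i.e.\ replace $x$ and $v_0$ throughout $\phi_0$ by $\pi_1(w)$ and $\pi_0(w)$ for a fresh $w$; by \autoref{Lemma: Sect2-orderedpair-property} the matrix remains $\Delta_0$, so the output is again literally $\Sigma_n$. For $\phi\land\psi$ (and dually $\phi\lor\psi$): rename the quantified variables of $\psi$ apart from those of $\phi$, interleave the two quantifier blocks into the prenex form displayed in the proof of \autoref{Corollary: Sect2-PrenexFormReducing}, and then run the inside-out loop that applies \autoref{Lemma: Sect2-TwoUniversalReducing} $n$ times, merging each adjacent like-quantifier pair through the projections $\pi_0,\pi_1$. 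Renaming, interleaving, and substituting $\pi_i$-terms are all plainly primitive recursive in the codes, and the loop iterates a standard number $n$ of times, so the composite operation $D_\bullet$ is primitive recursive.

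Finally I would argue correctness: every elementary step is exactly one of the equivalences established over $\mathsf{Z}_0^-$ in \autoref{Lemma: Sect2-orderedpair-property}, \autoref{Lemma: Sect2-TwoUniversalReducing}, and the proof of \autoref{Corollary: Sect2-PrenexFormReducing}, and the concatenation of finitely many $\mathsf{Z}_0^-$-provable equivalences is $\mathsf{Z}_0^-$-provable, yielding $\mathsf{Z}_0^-\vdash \phi' \lr D_{\phi'}$ for each of the three forms $\phi'$. The only point that deserves to be stated explicitly — and the only place any care at all is needed — is uniformity: one checks that the proof of \autoref{Corollary: Sect2-PrenexFormReducing} never branches on the particular shape of $\phi,\psi$ beyond the data the coding already records (their length, their quantifier alternation pattern, and that the matrix is $\Delta_0$), and in particular that $n$ is a fixed standard integer, so ``reduce duplicated quantifiers from the inside to the outside'' is a finite iteration rather than a metatheoretic induction. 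Granting this, the corollary is immediate; there is no substantive obstacle, and the ``hard'' part is merely the bookkeeping of choosing a clean enough formula coding and pinning down the meaning of ``effective''.
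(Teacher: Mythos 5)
Your proposal is correct and follows essentially the same route as the paper, which likewise disposes of the corollary by observing that the proof of \autoref{Corollary: Sect2-PrenexFormReducing} is uniform and effective in the codes of $\phi$ and $\psi$ (the paper then simply writes out the resulting $D$-formulas explicitly). Your added bookkeeping about Gödel coding and primitive recursiveness is a faithful elaboration of the same argument rather than a different one.
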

Its proof follows from observing the proof of \autoref{Corollary: Sect2-PrenexFormReducing}. Let us provide the corresponding $D$-formulas when $\phi$ and $\psi$ are $\Pi_n$ for future use: Let $\phi(\vec{x})=\phi(x_0,\cdots,x_m)$ and $\psi(\vec{x}) = \psi(x_0,\cdots, x_m)$ be $\Pi_n$-formulas as stated in the proof of \autoref{Corollary: Sect2-PrenexFormReducing}, \eqref{Formula: Sect2-Phi-Pin} and \eqref{Formula: Sect2-Psi-Pin}. Then
\begin{itemize}
    \item $D_{\forall x_0 \phi}$ is $\forall w \exists u_1\cdots \mathsf{Q} u_{n-1}\phi_0(\pi_0(w),x_1,\cdots,x_m,\pi_1(w),u_1,\cdots, u_{n-1})$
    \item $D_{\phi\land\psi}(\vec{x})$ is  
    $\forall w_0\exists w_1\cdots \mathsf{Q} w_{n-1} [\phi_0(\pi_0(w_0), \cdots, \pi_0(w_{n-1}))\land \psi_0(\pi_1(w_0), \pi_1(w_{n-1}))].$
    \item $D_{\phi\lor\psi}(\vec{x})$ is  
    $\forall w_0\exists w_1\cdots \mathsf{Q} w_{n-1} [\phi_0(\pi_0(w_0), \cdots, \pi_0(w_{n-1}))\lor \psi_0(\pi_1(w_0), \pi_1(w_{n-1}))].$
\end{itemize}

\subsection{Hierarchies of formulas with $j$}
Due to the subtle nature of the Wholeness axiom, we should distinguish formulas with the elementary embedding symbol $j\colon V\to V$ and without the elementary embedding symbol. Following the notation introduced in \cite{MatthwesPhD} and \cite{JeonMatthews2022}, we distinguish formulas with $j$ and those without $j$.

\begin{definition}
    We call $\Sigma_n$ or $\Pi_n$ formuals with $j$ as $\Sigma_n^j$ or $\Pi_n^j$ respectively. $\Sigma_n$ and $\Pi_n$ mean class of formulas without $j$.
\end{definition}

\begin{Fleischmann}
A major drawback of the usual Levy hierarchy is that there is no reason to believe each of $\Sigma_n$ and $\Pi_n$ are provably closed under bounded quantifiers. To be precise, $\ZFC$ proves that we can `alter' the order of a bounded $\forall$ and an unbounded $\exists$ in the sense that
\begin{equation*}
    \forall x\in a\exists y \phi(x,y)\to \exists b \forall x\in a\exists y\in b\phi(x,y).
\end{equation*}
We can see that the above formula is exactly what the axiom of Collection claims. However, we do not have Collection for formulas with $j$. Thus adding the bounded quantifier to a $j$-formula may increase its complexity unlike what happens over the usual $\ZFC$ context. A related issue about this complexity issue is that the claim that a $\Sigma^j_n$-definable function is really a function is no longer $\Sigma^j_n$-describable. 
Thus we define a new hierarchy by including bounded quantifiers by following the definition provided in \cite{JeonMatthews2022}.

\begin{definition}
    We define the \emph{Levy-Fleischmann hierarchy} inductively as follows:
    \begin{itemize}
        \item $\uDelta_0=\uSigma_0=\uPi_0$ is the collection of all bounded formulas.
        \item $\uSigma_n$ is the least collection of formulas containing $\uPi_{n-1}$ closed under $\land$, $\lor$, bounded quantifiers, and unbounded $\exists$.
        \item $\uPi_n$ is the least collection of formula containing $\uSigma_{n-1}$ closed under $\land$, $\lor$, bounded quantifiers, unbounded $\forall$, and formulas of the form $\phi\to\psi$, where $\phi$ is $\uSigma_{n-1}$ and $\psi$ is $\uPi_n$.
    \end{itemize}

    We call a formula $\uDelta_n$ if it is both of $\uSigma_n$ and $\uPi_n$.
\end{definition}
Levy-Fleischmann hierarchy was defined in the intuitionistic context, so it includes the implication in the definition of $\uPi_n$. Under the classical context, we may drop the condition for the implication in the definition of $\uPi_n$, and it will result in the equivalent definition. It is easy to see that $\uSigma_1$ formulas are also known as $\Sigma$-formulas.
Also, remind that all formulas over the language of set theory are some of $\uSigma_n$ or $\uPi_n$ for some $n$. Furthermore, we will see later that $\uSigma_n$-formulas are closed under recursive definitions.
We also use the notation $\uSigma_n^j$, $\uPi_n^j$, and $\uDelta_n^j$ to distinguish $j$-formulas from formulas without $j$.
\end{Fleischmann}

Lastly, for a given class of formulas $\Gamma$, we define $\Sigma_n(\Gamma)$, $\Pi_n(\Gamma)$ 
as the Levy hierarchy starting from $\Gamma$. That is, $\Delta_0(\Gamma)=\Sigma_0(\Gamma)=\Pi_0(\Gamma)$ is the least class of formulas containing $\Gamma$ and closed under logical connectives and bounded quantifiers, and we define $\Sigma_n(\Gamma)$ and $\Pi_n(\Gamma)$ similarly.
We also use notation like $\Sigma_n^j(\Gamma)$ to distinguish formulas with $j$. Especially, $\Delta^j_0(\Sigma_\infty)$ is the least class of formulas containing $\Sigma_\infty$-formulas, that is, formulas without $j$, which is closed under logical connectives and quantifiers bounded by terms.
\begin{definition}
    $\Delta^j_0(\Sigma_\infty)$ is the least class of formulas satisfying the following clauses:
    \begin{itemize}
        \item If $\phi(x_0,\cdots,x_{m-1})$ is $\Sigma_\infty$ and $t_0,\cdots, t_{m-1}$ are terms, then $\phi(t_0,\cdots,t_{m-1})$ is $\Delta^j_0(\Sigma_\infty)$.
        \item If $\phi$ and $\psi$ are $\Delta^j_0(\Sigma_\infty)$, then so are $\lnot\phi$ and $\phi\circ\psi$, where $\circ$ is either $\land$, $\lor$, or $\to$.
        \item If $\phi(x)$ is $\Delta^j_0(\Sigma_\infty)$ and $t$ is a term, then so is $\mathsf{Q} x\in t \phi(x)$.
    \end{itemize}
    Note that every term in the language takes the form $j^n(y)$ for some (meta-)natural $n$ and a free variable $y$.
    We define $\Sigma^j_n(\Sigma_\infty)$ and $\Pi^j_n(\Sigma_\infty)$ inductively as like we define $\Sigma_n$ and $\Pi_n$.
\end{definition}

Let us remark that we can reduce the repeating quantifiers in $\Sigma^j_n$- and $\Pi^j_n$-formulas similar to what \autoref{Lemma: Sect2-TwoUniversalReducing} state and the reduction procedure is uniform.
Their proof is similar to that of \autoref{Lemma: Sect2-TwoUniversalReducing}, \autoref{Corollary: Sect2-PrenexFormReducing}, and \autoref{Corollary: Sect2-Effective-normalization} so we only provide their statements.
Furthermore, the proof of the following lemmas does not use any properties of $j$, so they are provable over $\mathsf{Z}_0^-$.

\begin{lemma}[$\mathsf{Z}^-_0$] \pushQED{\qed}\label{Lemma: Sect2-jPrenexFormReducing}
    If $\phi$ and $\psi$ are $\Sigma^j_n$, then $\exists x\phi(x)$, $\phi\land\psi$, $\phi\lor\psi$ are also equivalent to a $\Sigma^j_n$-formula. 
    
    Similarly, if $\phi$ and $\psi$ are $\Pi^j_n$, then $\forall x\phi(x)$, $\phi\land\psi$, $\phi\lor\psi$ are also equivalent to a $\Pi^j_n$ formula.
    \qedhere 
\end{lemma}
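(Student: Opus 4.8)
The plan is to adapt the proofs of \autoref{Lemma: Sect2-TwoUniversalReducing} and \autoref{Corollary: Sect2-PrenexFormReducing} to the language with $j$. Every $\Sigma^j_n$- or $\Pi^j_n$-formula is, up to provable equivalence, a prenex formula consisting of alternating \emph{blocks} of quantifiers (each block a string of like quantifiers) over a $\Delta^j_0$ matrix, and each of $\exists x(-)$, $\forall x(-)$, $(-)\land(-)$, $(-)\lor(-)$, applied to prenex formulas of this shape, yields a prefix that either has one extra block or has two adjacent blocks of the same type to be merged; so everything reduces to collapsing a block of like quantifiers over a suitable matrix to a single quantifier without raising the complexity. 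Accordingly I would prove, simultaneously for all $n$ by induction on $n$, that over $\mathsf{Z}_0^-$ the class $\Sigma^j_n$ is closed under $\land$, $\lor$, and (for $n\ge 1$) $\exists x$, and dually $\Pi^j_n$ is closed under $\land$, $\lor$, and $\forall x$; the base case $n=0$ is just the closure of $\Delta^j_0$ under propositional connectives.

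The one step where the $j$-free argument does not transfer verbatim is the collapse itself. In \autoref{Lemma: Sect2-TwoUniversalReducing} one collapses $\forall u\,\forall v$ to $\forall x$ by literally substituting the $\Delta_0$-definable projections $u\mapsto\pi_0(x)$, $v\mapsto\pi_1(x)$ and then uses \autoref{Lemma: Sect2-orderedpair-property} to see that the matrix stays $\Delta_0$; crucially this substitution introduces no new quantifiers. That move fails with $j$: if the matrix contains a subterm $j^k(u)$, the substitution produces $j^k(\pi_0(x))$, which is not a term of $\{\in,j\}$, and rewriting it legally costs an \emph{unbounded} existential quantifier, since there is no bound for $\pi_0(x)$ in terms of $x$ in the absence of Powerset. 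Instead I would collapse by ``destructuring'' $x$ through bounded quantifiers, keeping $u,v$ as honest variables of the matrix. Using that $z=\langle x,y\rangle$ and $z=\bigcup x$ are $\Delta_0$ (\autoref{Lemma: Sect2-Basics}), that $\mathsf{Z}_0^-$ proves $\langle u,v\rangle$ exists with $u,v\in\bigcup\langle u,v\rangle$, and the uniqueness of Kuratowski pairs, one checks directly
\begin{align*}
    \exists u\,\exists v\,\theta(u,v) &\lr \exists x\,\bigl[\exists a\in\bigcup x\ \exists b\in\bigcup x\ (x=\langle a,b\rangle\land\theta(a,b))\bigr],\\
    \forall u\,\forall v\,\theta(u,v) &\lr \forall x\,\bigl[\forall a\in\bigcup x\ \forall b\in\bigcup x\ (x=\langle a,b\rangle\to\theta(a,b))\bigr],
\end{align*}
and when $\theta$ is $\Delta^j_0$ the bracketed matrices are again $\Delta^j_0$. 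Iterating this collapses any block of like quantifiers over a $\Delta^j_0$ matrix. For a block standing outside a matrix that is no longer $\Delta^j_0$ one needs refined forms that also pull the following quantifier outward, e.g.\ for a block of existentials immediately followed by a universal $\forall z$,
\begin{equation*}
    \exists u\,\exists v\,\forall z\,\theta(u,v,z) \lr \exists x\,\forall z\,\bigl[\mathrm{Pair}(x)\land\forall a\in\bigcup x\ \forall b\in\bigcup x\ (x=\langle a,b\rangle\to\theta(a,b,z))\bigr],
\end{equation*}
with $\mathrm{Pair}(x)$ the $\Delta_0$-formula ``$x$ is a Kuratowski pair'', and its dual; here too the matrix after $\exists x\,\forall z$ is $\Delta^j_0$ whenever $\theta$ is.

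With these collapse identities and the induction hypothesis, one processes a prenex formula from the innermost block outward: after each collapse the new matrix is a short prefix over a $\Delta^j_0$-formula, i.e.\ $\Sigma^j_m$ or $\Pi^j_m$ with $m<n$, and the closure properties available at lower levels — together with the elementary facts that like quantifiers commute, that $A\to\mathsf{Q}x\,B\lr\mathsf{Q}x(A\to B)$ when $x$ is not free in $A$, and that a $\Delta^j_0$-conjunct or disjunct not mentioning the quantified variables can be pushed into the matrix — let one absorb the bounded quantifiers and the $\mathrm{Pair}$-guard introduced by destructuring into the appropriate level. As with \autoref{Corollary: Sect2-PrenexFormReducing}, the construction is uniform in the input formulas, so it likewise yields the effective/normalized version in the spirit of \autoref{Corollary: Sect2-Effective-normalization} that the later sections rely on.

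The hard part is exactly this last bookkeeping. Since $\mathsf{Z}_0^-$ proves no Collection — and in particular none for formulas with $j$ — a bounded $\forall a\in c$ left standing in front of an unbounded $\exists$, or a bounded $\exists a\in c$ in front of an unbounded $\forall$, cannot be absorbed; so the collapse has to be organised so that every bounded quantifier introduced by destructuring is produced adjacent to, and then merged with, an unbounded quantifier of the same type before the next block out is treated. Verifying that this matching can always be arranged — which is where the refined collapse identities, rather than just the two displayed ones, get used — is the technical core of the argument.
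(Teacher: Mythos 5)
Your proposal is correct in substance, but it takes a genuinely more careful route than the paper, which simply declares the proof ``similar'' to \autoref{Lemma: Sect2-TwoUniversalReducing} and \autoref{Corollary: Sect2-PrenexFormReducing} and asserts that no properties of $j$ are used. You have put your finger on the one place where the $j$-free argument does not transfer verbatim: substituting $\pi_0(x),\pi_1(x)$ into a $\Delta^j_0$ matrix produces pseudo-terms $j^k(\pi_0(x))$, and since $\pi_0(x)$ need not lie in any term-definable set when $x$ is not a pair (and $\mathsf{Z}_0^-$ has no Powerset), eliminating them costs an unbounded quantifier unless one invokes elementarity to rewrite $j^k(\pi_0(x))$ as $\pi_0(j^k(x))$ --- which would contradict the claim that the lemma is provable in $\mathsf{Z}_0^-$ alone. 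Your destructuring identities, with $a,b$ ranging over $\bigcup x$ guarded by the $\Delta_0$ condition $x=\langle a,b\rangle$, repair this correctly and keep the matrix $\Delta^j_0$ (after unwinding $\mathsf{Q}a\in\bigcup x$ as $\mathsf{Q}c\in x\,\mathsf{Q}a\in c$).

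The one place where you make the argument harder than it needs to be is the final ``bookkeeping'' paragraph. You worry that the bounded quantifiers introduced by destructuring must be matched with adjacent unbounded quantifiers of the same type, since without Collection for $j$-formulas a stray bounded quantifier in front of an unbounded one raises the complexity. But the destructuring guard $\forall a\in\bigcup x\,\forall b\in\bigcup x\,(x=\langle a,b\rangle\to\;\cdot\;)$ mentions only $x$, $a$, $b$, and none of the remaining quantified variables of the prefix; so it commutes past the \emph{entire} residual prefix and can be wrapped around the $\Delta^j_0$ matrix itself:
\begin{equation*}
    \forall u\,\forall v\,\mathsf{Q}_1 w_1\cdots\mathsf{Q}_{n-1}w_{n-1}\,\phi_0(u,v,\vec{w})
    \;\lr\;
    \forall x\,\mathsf{Q}_1 w_1\cdots\mathsf{Q}_{n-1}w_{n-1}\,\forall a\in\mathop{\textstyle\bigcup}x\,\forall b\in\mathop{\textstyle\bigcup}x\,\bigl(x=\langle a,b\rangle\to\phi_0(a,b,\vec{w})\bigr),
\end{equation*}
the non-pair $x$ being handled vacuously. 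With this form, every collapse leaves a genuine prenex formula over a $\Delta^j_0$ matrix, no bounded quantifier is ever stranded in front of an unbounded one, and your ``refined'' identities are not needed. The construction remains uniform in the input codes, so the effective version \autoref{Corollary: Sect2-jprenex-Effective-normalization} follows as you say.
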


\begin{corollary}\label{Corollary: Sect2-jprenex-Effective-normalization} \pushQED{\qed}
    Suppose that $\phi(x)$ and $\psi(x)$ are $\Sigma^j_n$-formulas.
    Then we can effectively decide $\Sigma^j_n$-formulas $D_{\exists x\phi}$, $D_{\phi\land\psi}$, and $D_{\phi\lor\psi}$ such that $\mathsf{Z}^-_0$ proves $\exists x\phi(x)\lr D_{\exists x\phi(x)}$, $\phi\land\psi\lr D_{\phi\land\psi}$, and $\phi\lor\psi\lr D_{\phi\lor\psi}$.
    
    Similarly, if $\phi(x)$ and $\psi(x)$ are $\Pi^j_n$-formulas, then we can effectively decide $\Pi^j_n$-formulas $D_{\forall x\phi}$, $D_{\phi\land\psi}$, and $D_{\phi\lor\psi}$ such that $\mathsf{Z}^-_0$ proves $\forall x\phi(x)\lr D_{\forall x\phi(x)}$, $\phi\land\psi\lr D_{\phi\land\psi}$, and $\phi\lor\psi\lr D_{\phi\lor\psi}$.
    \qedhere 
\end{corollary}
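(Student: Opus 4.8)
The plan is to transport the chain \autoref{Lemma: Sect2-TwoUniversalReducing} $\Rightarrow$ \autoref{Corollary: Sect2-PrenexFormReducing} $\Rightarrow$ \autoref{Corollary: Sect2-Effective-normalization} to the language with $j$, replacing the bounded base formulas by bounded formulas that may mention $j$ and $\Sigma_n,\Pi_n$ by $\Sigma^j_n,\Pi^j_n$, and then to observe that every transformation involved is a fixed recursive operation on codes. The single fact that makes the transport painless is that the projections $\pi_0,\pi_1$ of \autoref{Lemma: Sect2-orderedpair-property} are $\Delta_0$-definable \emph{without} $j$: substituting $\pi_0(x),\pi_1(x)$ for free variables never introduces a new occurrence of $j$ and never raises quantifier complexity.

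First I would record the $j$-version of the substitution step. The second part of \autoref{Lemma: Sect2-orderedpair-property} is proved by induction on the build-up of a bounded formula; since adjoining atomic $j$-subformulas as further base cases changes nothing (the terms $\pi_0,\pi_1$ contain no $j$), it holds verbatim for bounded $j$-formulas: if $\phi$ is $\Sigma^j_0$ then so are $\phi(\pi_0(x))$ and $\phi(\pi_1(x))$, provably over $\mathsf{Z}_0^-$. Combined with the correctness of the Kuratowski pair (\autoref{Lemma: Sect2-Basics}), this yields the $j$-analogue of \autoref{Lemma: Sect2-TwoUniversalReducing}: for a $\Sigma^j_0$-formula $\phi_0$,
\[
  \forall x_0\forall x_1\exists v_0\cdots\mathsf{Q} v_{n-1}\,\phi_0(x_0,x_1,\vec v)\;\lr\;\forall x\,\exists v_0\cdots\mathsf{Q} v_{n-1}\,\phi_0(\pi_0(x),\pi_1(x),\vec v),
\]
together with the dual statement for two leading existential quantifiers, over $\mathsf{Z}_0^-$. \autoref{Lemma: Sect2-jPrenexFormReducing} then follows exactly as \autoref{Corollary: Sect2-PrenexFormReducing}, by induction on $n$: to see that $\exists x\,\phi$ is $\Sigma^j_n$ when $\phi=\exists v_0\,\psi$ with $\psi\in\Pi^j_{n-1}$, one moves $\exists x$ next to $\exists v_0$ and collapses the two outermost existentials by the two-variable reduction, appealing to the inductive hypothesis that $\psi(\pi_0(x),\pi_1(x),\dots)$ is again $\Pi^j_{n-1}$; for $\phi\land\psi$ and $\phi\lor\psi$ with both $\Sigma^j_n$ one renames bound variables apart, prenexes, and then removes duplicated quantifiers from the innermost layer outward by repeated two-variable reduction, each step legal because the relevant matrix is closed under $\land,\lor$ by the inductive hypothesis at level $n-1$. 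The $\Pi^j_n$ case is dual; in the classical setting no separate implication clause is needed.

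The meta-corollary is then immediate: renaming bound variables, prenexing, substituting $\pi_0,\pi_1$, and recursing on $n$ are all carried out by inspecting only the logical skeleton of $\phi$ and $\psi$, never the internal structure of their atomic subformulas, so the procedure runs uniformly and produces $D_{\exists x\phi}$, $D_{\phi\land\psi}$, $D_{\phi\lor\psi}$ (and the $\Pi^j_n$ counterparts) together with the required $\mathsf{Z}_0^-$-proofs of the stated equivalences. Writing $\phi(\vec x)\equiv\forall u_0\exists u_1\cdots\mathsf{Q} u_{n-1}\,\phi_0(\vec x,\vec u)$ and $\psi(\vec x)\equiv\forall v_0\exists v_1\cdots\mathsf{Q} v_{n-1}\,\psi_0(\vec x,\vec v)$ in the $\Pi^j_n$ case, one reads off $D_{\forall x_0\phi}$ as $\forall w\exists u_1\cdots\mathsf{Q} u_{n-1}\,\phi_0(\pi_0(w),x_1,\dots,x_m,\pi_1(w),u_1,\dots,u_{n-1})$, and $D_{\phi\land\psi},D_{\phi\lor\psi}$ exactly as in the list of $D$-formulas displayed after \autoref{Corollary: Sect2-Effective-normalization}, now with $\phi_0,\psi_0$ permitted to mention $j$.

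The one genuine obstacle is the $\Sigma^j_0$-substitution step, which deserves a second look precisely because Collection fails for $j$-formulas: one must be sure that substituting $\pi_0(x),\pi_1(x)$ into a bounded $j$-formula cannot push an unbounded quantifier past the bound of another quantifier. This is guaranteed because $\pi_0,\pi_1$ are rudimentary (\cite{Jensen1972Fine}), hence simple, and being $j$-free they remain simple however $j$ happens to occur elsewhere in the formula; the check is mechanical, and with it in hand the remainder is the same bookkeeping as in the $j$-free case.
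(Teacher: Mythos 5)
Your proposal is correct and follows essentially the same route as the paper, which proves this corollary simply by pointing back to \autoref{Lemma: Sect2-TwoUniversalReducing}, \autoref{Corollary: Sect2-PrenexFormReducing}, and \autoref{Corollary: Sect2-Effective-normalization} and observing that none of those arguments uses any property of $j$. Your extra care about the $j$-free, $\Delta_0$-definable projections $\pi_0,\pi_1$ not raising the complexity of bounded $j$-formulas is exactly the point that makes the transport work, and is consistent with the paper's remark.
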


\subsection{$\BTEE$ and $\WA$}
In this subsection, we review the \emph{Basic Theory of Elementary Embeddings} $\BTEE$, the \emph{Wholeness Axiom} $\WA$ and their consequences.
Most of the definitions and results in this subsection are due to \cite{Corazza2006}.
We start from a `minimal' theory for an elementary embedding $j\colon V\to V$, named  the \emph{Basic Theory of Elementary Embeddings} $\BTEE$.
\begin{definition}
    $\BTEE$ is a combination of the following claims:
    \begin{itemize}
        \item Elementarity: for any formula $\phi(\vec{x})$ without $j$, we have $\forall \vec{a}[\phi(\vec{a})\to\phi(j(\vec{a}))]$.
        \item Critical Point: $j$ has a critical point, that is, there is an ordinal $\alpha$ such that $j(\alpha)>\alpha$ and $j(\xi)=\xi$ for all $\xi<\alpha$.
    \end{itemize}
\end{definition}

$\BTEE$ is so weak that it even does not allow applying induction on $\omega$ to formulas with $j$. Thus we postulate induction over $\omega$ for $j$-formulas as a separate claim. Also, we consider Separation restricted to formulas in $\Gamma$ to gauge the strength of restricted Separation:
\begin{definition}
    Let $\Gamma$ be a class of formulas. Then $\Gamma$-Induction is induction schema on $\omega$ for formulas in $\Gamma$. Similarly, $\Gamma$-Separation is a Separation schema restricted to formulas in $\Gamma$.
\end{definition}
It is easy to see that $\Gamma$-Separation shows not only $\Gamma$-Induction but also induction for the negation of formulas in $\Gamma$.

We sometimes consider $\Delta_n$ properties, which is both $\Sigma_n$ and $\Pi_n$. Unlike the latter two, there is no single entity of $\Delta_n$-formulas that is independent of the background theory. Thus defining axioms restricted to $\Delta_n$-properties requires a different formulation.
\begin{definition}
    $\Delta^j_n$-Separation is the following statement: For each $\Sigma^j_n$-formula $\phi(x)$ and $\Pi^j_n$-formula $\psi(x)$, a set $a$ and a parameter $p$, 
    if we know $\forall x\in a (\phi(x,p)\lr \psi(x,p))$, then $\{x\in a\mid \phi(x,p)\}$ exists for any $a$.
    We can define $\Delta^j_n$-Induction in a similar manner.
\end{definition}

With an appropriate induction scheme, we can show we can recursively define a function over the set of natural numbers:
\begin{theorem}[{\cite[Theorem 4.6]{Corazza2006}}]
\label{Theorem: Sect2-Corazza-Recursion-n+2}
    Suppose that $F\colon V\to V$ is a $\Sigma^j_n$-definable class function with parameter $p$, that is, there is a $\Sigma^j_n$-formula with parameter $p$ expressing $y= F(x)$.
    \begin{enumerate}
        \item If we assume $\Sigma^j_{n+2}$-Induction, then there is a $\Sigma^j_{n+2}$-definable $G\colon \omega\to V$ with parameter $p$ such that $G\restricts k$ is a set and $G(k)=F(G\restricts k)$ for each $k<\omega$.
        \item If $n=0$, then $\Sigma^j_1$-Induction shows we can find $\Sigma^j_1$-definable $G$ with parameter $p$. 
    \end{enumerate}
    The choice of $G$ does not depend on the parameter $p$.
\end{theorem}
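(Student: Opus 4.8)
The plan is to carry out the standard ``set of finite approximations'' proof of the recursion theorem, but to keep a careful tally of the Levy complexity of each auxiliary formula, since the absence of Collection for $j$-formulas means that prepending a bounded quantifier $\forall i<k$ to a $\Sigma^j_n$ matrix cannot be collapsed back to $\Sigma^j_n$ and instead costs us levels.

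First I would fix the approximation predicate
\[
    \mathrm{Apx}(g,k)\ :\equiv\ \text{``$g$ is a function''}\ \land\ \dom(g)=k\ \land\ \forall i<k\,\bigl(g(i)=F(g\restricts i)\bigr),
\]
where ``$g$ is a function'', ``$\dom(g)=k$'', ``$y=g(i)$'' and ``$z=g\restricts i$'' are all $\Delta_0$ by \autoref{Lemma: Sect2-Basics} and \autoref{Lemma: Sect2-orderedpair-property}. Since $y=F(x)$ is $\Sigma^j_n$, the matrix $g(i)=F(g\restricts i)$ is equivalent to a $\Sigma^j_n$ formula whose only extra parameters are bounded in $g$ and $i$; using the $\Sigma^j_n$-prenexing of \autoref{Lemma: Sect2-jPrenexFormReducing} and folding the bounded $\forall i<k$ into the leading unbounded $\forall$ of the resulting $\Pi^j_{n-1}$ part, I would check that $\forall i<k\,(g(i)=F(g\restricts i))$ is $\Pi^j_{n+1}$ when $n\ge 1$, while for $n=0$ (where $\Sigma^j_0$ just means bounded) it is again bounded. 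Hence $\mathrm{Apx}(g,k)$ is $\Pi^j_{n+1}$ for $n\ge 1$ and $\Delta^j_0$ for $n=0$, so $\exists g\,\mathrm{Apx}(g,k)$ is $\Sigma^j_{n+2}$ for $n\ge 1$ and $\Sigma^j_1$ for $n=0$.

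Then I would prove three routine facts. \emph{Uniqueness:} $\mathrm{Apx}(g,k)\land\mathrm{Apx}(h,k)$ implies $g=h$, by induction on $i\le k$ showing $g\restricts i=h\restricts i$; since this formula is $\Delta_0$, only $\Delta_0$-induction is used. \emph{Existence:} $\forall k\,\exists g\,\mathrm{Apx}(g,k)$, by induction on $k$ with base case $g=\varnothing$ and successor step $g\mapsto g\cup\{\langle k,F(g)\rangle\}$, using that $F$ is total and functional; the induction formula is exactly the $\Sigma^j_{n+2}$ (resp.\ $\Sigma^j_1$) statement above, so this is the step that consumes the hypothesis $\Sigma^j_{n+2}$-Induction (resp.\ $\Sigma^j_1$-Induction). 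Finally I would set $G(k)=y\ :\iff\ \exists g\,(\mathrm{Apx}(g,k+1)\land g(k)=y)$, which by the count is $\Sigma^j_{n+2}$ (resp.\ $\Sigma^j_1$); Existence and Uniqueness make $G$ a total class function. Uniqueness also shows that $G\restricts k$ is the unique $w$ with $\mathrm{Apx}(w,k)$, so $G\restricts k$ is a set — the point worth flagging, since $\Sigma^j_{n+2}$-Replacement is unavailable — and the recursion equation $G(k)=F(G\restricts k)$ is just the $i=k$ clause of $\mathrm{Apx}(g,k+1)$.

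The only genuinely delicate step is the complexity bookkeeping in the second paragraph: one must be certain that $\forall i<k$ in front of $\Sigma^j_n$ lands exactly in $\Pi^j_{n+1}$ and \emph{not} back in $\Sigma^j_n$ (which is precisely where the failure of $j$-Collection bites), and that the $\Delta_0$ function/domain conjuncts and the bounded-term substitutions $g\restricts i$, $g(i)$ do not inflate the complexity — for which \autoref{Lemma: Sect2-orderedpair-property}, \autoref{Lemma: Sect2-jPrenexFormReducing} and \autoref{Corollary: Sect2-jprenex-Effective-normalization} are exactly the tools. Everything else is the ordinary recursion-theorem argument, carried out with no more than $\Delta_0$-induction.
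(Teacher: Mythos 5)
Your proposal is correct and follows essentially the same route as the paper's proof: the same approximation predicate ($\mathrm{Apx}$ is the paper's $\gamma$), the same uniqueness argument via $\Delta_0$-induction, the same existence argument via $\Sigma^j_{n+2}$- (resp.\ $\Sigma^j_1$-) induction on $\exists g\,\gamma(g,m)$, and the same existential definition of $G$. The complexity bookkeeping you flag as the delicate point is exactly the point the paper makes (the bounded $\forall i<k$ pushes $\gamma$ to $\Pi^j_{n+1}$ absent $j$-Collection), and your tally matches the paper's.
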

For the later analysis, let us provide the proof for the above theorem:
\begin{proof}
    Let $\phi(x,y,p)$ be a $\Sigma^j_n$-formula defining $F(x)=y$. 
    Now let $\gamma(g,m,p)$ be the following formula:
    \begin{equation*}
        \gamma(g,m,p)\equiv \text{``$g$ is a function''} \land \dom g = m \land \forall i\in m \phi(g\restricts i, g(i),p).
    \end{equation*}
    If $\phi(x,y,p)$ is $\Sigma^j_n$, then $\gamma(g,m,p)$ is $\Pi^j_{n+1}$ due to the universal quantifier over $i$.
    Now we will claim the following statements: For every natural number $m$,
    \begin{itemize}
        \item If $\gamma(f,m,p)$ and $\gamma(g,m,p)$, then $f=g$.
        \item There is $g$ such that $\gamma(g,m,p)$.
    \end{itemize}
    The first claim follows from applying induction to the formula $i<m \to f(i)=g(i)$, which is $\Delta_0$.
    The second claim follows from applying induction on $m$ to the formula $\exists g \gamma(g,m,p)$, which is $\Sigma^j_{n+2}$. Now define $G(m)$ as follows:
    \begin{equation*}
        G(m)=y \iff \exists g [\gamma(g,m+1,p)\land g(m)=y].
    \end{equation*}
    If $\phi(x,y,p)$ is $\Delta^j_0$, then $\gamma(g,m,p)$ is also $\Delta^j_0$.
    Hence the formula $\exists g \gamma(g,m,p)$ is $\Sigma^j_1$, so $\Sigma^j_1$-Induction suffices to prove $\forall m<\omega\exists g \gamma(g,m,p)$. Finally, the defining formula for $G$ is $\Sigma^j_1$ with parameter $p$. It is clear that the defining formula of $G$ is uniform in $p$.
\end{proof}
The reader might be curious why the definitional complexity of $G$ is $\Sigma^j_{n+2}$ and not $\Sigma^j_n$.
We will show a trick on how to reduce the complexity of $G$.
However, let us point out that our naive $\mathsf{ZFC}$-minded argument does not work: It is unclear whether bounded quantifiers do not affect the complexity of formulas. If we work over $\mathsf{ZFC}$, then the axiom of Collection
\begin{equation*}
    \forall x\in a\exists y\phi(x,y)\to \exists b\forall x\in a\exists y\in b \phi(x,y)
\end{equation*}
provides a way to reduce bounded quantifiers. However, we do not have Collection for $j$-formulas.

But in this case, we can reduce bounded quantifiers in a different way. Let us observe that the bounded quantifier that increases the complexity of the formula $G(m)=y$ is $\forall i\in m$, which is bounded by a natural number.
We can see that under $\Sigma^j_n$-Induction, adding a quantifier bounded by a natural number to a $\Sigma^j_n$-formula does not increase the complexity of a $\Sigma^j_n$-formula:
\begin{lemma}[$\mathsf{Z}^-_0+\Sigma^j_n$-Induction]
    Let $\phi(i,y)$ be a $\Pi^j_{n-1}$-formula Then we can prove the following: For a natural number $m<\omega$,
    \begin{equation*}
        \forall i<m \exists y\phi(i,y)\to \exists f[(\text{$f$ is a function}) \land \dom f=m\land \forall i<m \phi(i,f(i))].
    \end{equation*}
\end{lemma}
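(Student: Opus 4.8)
The proof is by $\Sigma^j_n$-Induction on $k<\omega$. One should not try to induct directly on the displayed implication, since its antecedent $\forall i<k\,\exists y\,\phi(i,y)$ — a universal quantifier bounded by $k$ placed in front of a $\Sigma^j_n$-formula — is not provably $\Sigma^j_n$ (nor $\Pi^j_n$) over $\mathsf{Z}^-_0$; taming exactly this phenomenon for natural-number bounds is what the lemma is for. Instead, the plan is to fix $m$ and carry it along as an external hypothesis, so that the formula actually fed to the induction has controlled complexity.

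Fix a natural number $m$ and assume $\forall i<m\,\exists y\,\phi(i,y)$. Let $\theta(f,k)$ be the conjunction of the $\Delta^j_0$-assertions that $f$ is a function and $\dom f=k$ together with $\forall i\,\forall y\,\bigl(i\in k\to(\langle i,y\rangle\in f\to\phi(i,y))\bigr)$, and let $\chi(k)$ be the formula $(k\le m)\to\exists f\,\theta(f,k)$. The base case $\chi(0)$ holds with $f=\varnothing$ (the last conjunct of $\theta$ being vacuous). For the inductive step, assume $\chi(k)$ and $k+1\le m$; then $k<m$, so $k\le m$ and $\chi(k)$ supplies $f$ with $\theta(f,k)$, while the standing hypothesis applied at $i=k$ supplies $y$ with $\phi(k,y)$; the set $f'=f\cup\{\langle k,y\rangle\}$, obtained by Pairing and Union, satisfies $\theta(f',k+1)$, so $\chi(k+1)$ holds. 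Hence $\Sigma^j_n$-Induction yields $\forall k<\omega\,\chi(k)$; instantiating at $k=m$ and using $m\le m$ gives the desired $f$, and discharging the standing hypothesis proves the lemma.

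The point that requires genuine care is that $\chi(k)$ is, provably over $\mathsf{Z}^-_0$, equivalent to a $\Sigma^j_n$-formula, so that the instance of the induction schema used above is actually an axiom. The only delicate conjunct of $\theta$ is $\forall i\,\forall y\,\bigl(i\in k\to(\langle i,y\rangle\in f\to\phi(i,y))\bigr)$, which is written with \emph{unbounded} universal quantifiers on purpose: since $i$ occurs under a universal there is no need to bound it, so no appeal to Collection (which is unavailable) is required, and the two universals fold into one by the pairing and projection machinery of \autoref{Lemma: Sect2-orderedpair-property}, \autoref{Lemma: Sect2-jPrenexFormReducing}, and \autoref{Corollary: Sect2-jprenex-Effective-normalization}. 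Its matrix $\lnot(i\in k)\lor\lnot(\langle i,y\rangle\in f)\lor\phi(i,y)$ is a disjunction of two $\Delta^j_0$-formulas with a $\Pi^j_{n-1}$-formula, hence $\Pi^j_{n-1}$ by the same lemmas; therefore the conjunct, and with it $\theta(f,k)$, is $\Pi^j_{n-1}$, so $\exists f\,\theta(f,k)$ is $\Sigma^j_n$ and $\chi(k)\equiv\lnot(k\le m)\lor\exists f\,\theta(f,k)$ is $\Sigma^j_n$ as well. (When $n=1$ these classes collapse to $\Delta^j_0$ and $\Sigma^j_1$, explaining why $\Sigma^j_1$-Induction suffices there, in line with the $n=0$ clause of \autoref{Theorem: Sect2-Corazza-Recursion-n+2}.) This bookkeeping, rather than the combinatorics of the induction, is where the real work lies.
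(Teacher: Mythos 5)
Your proof is essentially the paper's proof: fix $m$, take $\forall i<m\,\exists y\,\phi(i,y)$ as a standing hypothesis, and apply $\Sigma^j_n$-Induction to the formula $k\le m\to\exists f\,\theta(f,k)$, extending $f$ by one pair at the successor step. The paper simply asserts that this formula ``is equivalent to a $\Sigma^j_n$-formula''; your contribution is to spell out the bookkeeping, and for $n\ge 2$ your accounting is correct, since the unbounded $\forall i\,\forall y$ absorbs into the leading universal block of the $\Pi^j_{n-1}$ matrix via \autoref{Lemma: Sect2-jPrenexFormReducing}.

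The one slip is the parenthetical about $n=1$. There $\phi$ is $\Pi^j_0=\Delta^j_0$, and your deliberately unbounded conjunct $\forall i\,\forall y\,\bigl(i\in k\to(\langle i,y\rangle\in f\to\phi(i,y))\bigr)$ is genuinely $\Pi^j_1$, not $\Delta^j_0$: prefixing unbounded universals to a bounded matrix strictly raises the complexity, and \autoref{Lemma: Sect2-jPrenexFormReducing} only closes $\Pi^j_n$ under $\forall$ for $n\ge 1$. So $\exists f\,\theta(f,k)$ lands in $\Sigma^j_2$ and your instance of induction is not a $\Sigma^j_1$-instance. The fix is immediate -- write the conjunct as $\forall p\in f\,\bigl(\pi_0(p)\in k\to\phi(\pi_0(p),\pi_1(p))\bigr)$ using \autoref{Lemma: Sect2-orderedpair-property}, which is $\Delta^j_0$ when $\phi$ is, and which for $n\ge 2$ is still $\Pi^j_{n-1}$ -- but as written the $n=1$ case of your argument does not go through.
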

\begin{proof}
    Suppose that $\forall i<m\exists y\phi(i,y)$ holds. We apply induction to the following formula:
    \begin{equation*}
        k\le m\to \exists f (\text{$f$ is a function}) \land \dom f=k\land \forall i<k \phi(i,f(i)).
    \end{equation*}
    We can see that the above formula is equivalent to a $\Sigma^j_n$-formula, so we can apply $\Sigma^j_n$-Induction. The case $k=0$ is easy, and now assume that the above statement holds for a given $k$. 
    If $k+1\le m$, then $k<m$, so we can find $y$ such that $\phi(k,y)$. By the inductive hypothesis, we have a function $f\colon k\to V$ such that $\phi(i,f(i))$ holds for all $i<k$. If we take $f'=f\cup\{(k,y)\}$, then $f'$ is the desired function we find.
\end{proof}
By \autoref{Lemma: Sect2-jPrenexFormReducing}, a conjunction of $\Pi^j_{n-1}$ formulas is equivalent to a $\Pi^j_{n-1}$ formula. Hence we get
\begin{corollary}[$\mathsf{Z}^-_0+\Sigma^j_n$-Induction]\pushQED{\qed} \label{Corollaty: Sect2-BddQreduction}
    Let $\phi(i,y)$ be a $\Sigma^j_n$-formula and $m<\omega$. Then $\forall i<m\phi(i,y)$ is equivalent to a $\Sigma^j_n$-formula. \qedhere 
\end{corollary}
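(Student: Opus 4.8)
The plan is to read the corollary off the preceding lemma, the only nontrivial ingredient being the observation that a universal quantifier bounded by a natural number collapses the alternation $\forall i<m\,\exists z$ once $\Sigma^j_n$-Induction is available. Recall that a $\Sigma^j_n$-formula is, by definition, of the form $\exists z\,\psi(i,y,z)$ for some $\Pi^j_{n-1}$-formula $\psi$, so $\forall i<m\,\phi(i,y)$ is literally $\forall i<m\,\exists z\,\psi(i,y,z)$; since $m$ is a genuine variable ranging over $\omega$ this prefix cannot be merged syntactically, and absent Collection for $j$-formulas the naive complexity bound is only $\Pi^j_{n+1}$. The case $n=0$ is immediate, as bounded quantifiers never leave $\Delta_0$, so assume $n\ge 1$.

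Applying the preceding lemma to the $\Pi^j_{n-1}$-formula $\psi$ --- with $i$ as the bounded variable, the existentially quantified variable renamed to $z$, and $y$ carried along as a parameter --- gives, provably in $\mathsf{Z}_0^-+\Sigma^j_n$-Induction,
\[
    \forall i<m\,\exists z\,\psi(i,y,z)\ \longrightarrow\ \exists f\bigl[(\text{$f$ is a function})\land \dom f=m\land \forall i<m\,\psi(i,y,f(i))\bigr].
\]
The converse implication is provable already in $\mathsf{Z}_0^-$: given such an $f$, for each $i<m$ the value $f(i)$ witnesses $\exists z\,\psi(i,y,z)$. Hence $\forall i<m\,\phi(i,y)$ is, provably in $\mathsf{Z}_0^-+\Sigma^j_n$-Induction, equivalent to the displayed existential statement, and it remains only to check that the latter is $\Sigma^j_n$.

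For this I would rewrite the matrix so that its only unbounded quantifier is the leading $\exists f$. The conjuncts ``$f$ is a function'' and ``$\dom f=m$'' are $\Delta_0$ by \autoref{Lemma: Sect2-Basics} and \autoref{Lemma: Sect2-orderedpair-property}. In the presence of these conjuncts, the last one may be rewritten as $\forall p\in f\,\psi(\pi_0(p),y,\pi_1(p))$, a quantifier bounded by the variable $f$ applied to a formula which, by the evident extension of \autoref{Lemma: Sect2-orderedpair-property} to $\Pi^j_{n-1}$-formulas (an induction on complexity reducing to its $\Delta_0$ case), is again $\Pi^j_{n-1}$; and prefixing this bounded universal quantifier keeps it $\Pi^j_{n-1}$ by \autoref{Lemma: Sect2-jPrenexFormReducing}. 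Thus the matrix is a threefold conjunction of $\Pi^j_{n-1}$-formulas, hence $\Pi^j_{n-1}$ again by \autoref{Lemma: Sect2-jPrenexFormReducing}, and so $\exists f[\,\cdots\,]$ is $\Sigma^j_n$, as required. The only step demanding real care --- and it is the entire content of the statement --- is the first displayed implication: it genuinely uses $\Sigma^j_n$-Induction and is not obtainable from the purely syntactic prenexing of \autoref{Lemma: Sect2-jPrenexFormReducing} alone, since there is no Collection available to collapse $\forall i<m\,\exists z$ directly; everything else is routine complexity bookkeeping.
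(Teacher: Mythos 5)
Your proposal is correct and follows essentially the same route as the paper: write $\phi$ as $\exists z\,\psi$ with $\psi$ in $\Pi^j_{n-1}$, invoke the preceding lemma to trade $\forall i<m\,\exists z$ for a single witnessing function $f$, note the converse is trivial, and then check by the prenexing lemmas that the matrix of $\exists f[\cdots]$ is $\Pi^j_{n-1}$. The paper compresses all of this into one sentence; your write-up merely supplies the intended bookkeeping (including the correct observation that the bounded $\forall$ over the $\Pi^j_{n-1}$ matrix is absorbed into its leading universal block rather than requiring Collection).
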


Going back to the proof of \autoref{Theorem: Sect2-Corazza-Recursion-n+2}, we can see that if $F$ is defined by a $\Sigma^j_n$-formula and we have $\Sigma^j_n$-Induction, then the resulting $G$ is $\Sigma^j_n$-definable. 
Furthermore, if $F$ is $\Delta^j_n$-definable, then $\gamma(g,m)$ becomes a $\Delta^j_n$-formula. We still need a $\Sigma^j_n$-Induction to ensure $\exists g \gamma(g,m)$ holds for all $m$, but by uniqueness of $g$ satisfying $\gamma(g,m)$, the following formula also defines $G$, which is $\Pi^j_n$:
\begin{equation*}
    \forall g [\gamma(g,m+1)\to g(m)=y].
\end{equation*}
Hence if $F$ is $\Delta^j_n$-definable, then $G$ is also $\Delta^j_n$-definable.
In sum, we have the following:
\begin{corollary}[$\mathsf{Z}^-_0+\Sigma^j_n$-Induction]\label{Corollary: Sect2-optimized-recursion} \pushQED{\qed}
    Let $F\colon V\to V$ be a $\Sigma^j_n$-definable function with parameter $p$. Then the function $G\colon \omega\to V$ satisfying $G(k)=F(G\restricts k)$ is $\Sigma^j_n$-definable with parameter $p$. 
    
    Furthermore, if $F$ is $\Delta^j_n$-definable with parameter $p$, then $G$ is also $\Delta^j_n$-definable with parameter $p$. The defining formula of $G$ is uniform in $p$. \qedhere 
\end{corollary}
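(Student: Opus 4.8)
The statement is essentially a complexity-sharpened version of \autoref{Theorem: Sect2-Corazza-Recursion-n+2}, so the plan is to re-run that theorem's proof verbatim while tracking the exact complexity of every formula, using the observation recorded in the corollary immediately preceding this one: prepending a universal quantifier \emph{bounded by a natural number} to a $\Sigma^j_n$-formula yields a $\Sigma^j_n$-formula, once $\Sigma^j_n$-Induction is available.

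First I would fix a $\Sigma^j_n$-formula $\phi(x,y)$ defining $F(x)=y$ and revisit the auxiliary formula $\gamma(g,m)\equiv\text{``}g\text{ is a function''}\land\dom g=m\land\forall i\in m\,\phi(g\restricts i,g(i))$ from the proof of \autoref{Theorem: Sect2-Corazza-Recursion-n+2}. Substituting the $\Delta_0$-definable terms $g\restricts i$ and $g(i)$ into $\phi$ keeps the matrix $\Sigma^j_n$, as in the ``furthermore'' clause of \autoref{Lemma: Sect2-orderedpair-property}. Since $\gamma$ is only ever applied with $m<\omega$, the quantifier $\forall i\in m$ is bounded by a natural number, so by the preceding corollary $\forall i\in m\,\phi(g\restricts i,g(i))$ is equivalent to a $\Sigma^j_n$-formula; combining with the two $\Delta_0$ conjuncts via \autoref{Lemma: Sect2-jPrenexFormReducing} shows $\gamma(g,m)$ is $\Sigma^j_n$. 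This is precisely the improvement over the $\Pi^j_{n+1}$ bound of the original argument, where $\forall i\in m$ was counted naively.

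Next I would re-derive the two claims of the original proof unchanged: uniqueness of $g$ with $\gamma(g,m)$ by $\Delta_0$-Induction on $i<m\to f(i)=g(i)$, and totality $\forall m<\omega\,\exists g\,\gamma(g,m)$ by applying $\Sigma^j_n$-Induction to $\exists g\,\gamma(g,m)$, which is $\Sigma^j_n$ by \autoref{Lemma: Sect2-jPrenexFormReducing}. Then $G(m)=y\iff\exists g[\gamma(g,m+1)\land g(m)=y]$ defines the desired $G$, and its graph is again $\Sigma^j_n$ by \autoref{Lemma: Sect2-jPrenexFormReducing}, proving the first assertion. For the $\Delta^j_n$ strengthening, assume $\phi$ is both $\Sigma^j_n$ and $\Pi^j_n$; the $\Sigma^j_n$-bound on $\gamma$ is as above, while on the $\Pi^j_n$ side $\forall i\in m\,\phi(g\restricts i,g(i))$ is $\Pi^j_n$ because $\Pi^j_n$ absorbs bounded universal quantifiers (merge $\forall i\in m$ with the leading $\forall$ of the $\Pi^j_n$ matrix and use closure of $\Sigma^j_{n-1}$ under $\lor$ from \autoref{Lemma: Sect2-jPrenexFormReducing}; this needs no Collection), so $\gamma$ is $\Delta^j_n$. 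Totality still costs $\Sigma^j_n$-Induction, but once it is in hand, uniqueness lets us alternatively write $G(m)=y\iff\forall g[\gamma_\Sigma(g,m+1)\to g(m)=y]$, where $\gamma_\Sigma$ is the $\Sigma^j_n$-representative of $\gamma$; a short computation ($(\exists w\,\delta)\to\chi\equiv\forall w(\lnot\delta\lor\chi)$ with $\delta\in\Pi^j_{n-1}$, $\chi$ bounded) shows this is $\Pi^j_n$, so $G$ is $\Delta^j_n$-definable.

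Because the genuine content sits in the lemmas already proved, I do not expect a serious obstacle. The two places that need care are: (i) ensuring that the offending $\forall i\in m$ really is bounded by an element of $\omega$ at the point where the preceding corollary is invoked — it is, since $\gamma$ is never used with $m\notin\omega$ — and (ii) in the $\Delta^j_n$ case, remembering to plug the \emph{$\Sigma^j_n$-representation} of $\gamma$ (not its $\Pi^j_n$-representation) into the universal definition of $G$, since a leading $\forall g$ over a $\Sigma^j_n$ matrix would otherwise inflate the complexity to $\Pi^j_{n+1}$.
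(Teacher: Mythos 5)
Your proposal is correct and follows essentially the same route as the paper: the paper also obtains the sharpened bound by rerunning the proof of \autoref{Theorem: Sect2-Corazza-Recursion-n+2} with the observation that the quantifier $\forall i\in m$ is bounded by a natural number (so the preceding corollary keeps $\gamma$ at level $\Sigma^j_n$), and in the $\Delta^j_n$ case it likewise uses uniqueness of $g$ to switch to the dual definition $\forall g[\gamma(g,m+1)\to g(m)=y]$. Your two cautionary points, including plugging the $\Sigma^j_n$-representation of $\gamma$ into the universal form, are exactly the subtleties the paper's argument relies on.
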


\begin{Fleischmann}
We can mend this issue if we work with the Levy-Fleischmann hierarchy instead of the usual Levy hierarchy. If follows from by examining the usual proof of recursion theorem (or that of Theorem 4.6 of \cite{Corazza2006}.)

\begin{theorem} \pushQED{\qed}
    Let $F\colon V\to V$ be a $\uSigma^j_n$-definable class function. If we assume $\uSigma^j_n$-Induction, then we can show there is a class function $G\colon \omega\to V$ such that $G\restricts n$ is a set for each $n<\omega$ and $G(n) = F(G\restricts n)$.
    \qedhere 
\end{theorem}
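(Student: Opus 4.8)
The plan is to reprise the proof of \autoref{Theorem: Sect2-Corazza-Recursion-n+2}, working over $\mathsf{Z}^-_0$; the one change is that the Levy-Fleischmann hierarchy is closed under bounded quantifiers by fiat, which removes the complexity jump that forced the ``$+2$'' there. Fix a $\uSigma^j_n$-formula $\phi(x,y)$ defining $y=F(x)$ and set
\begin{equation*}
    \gamma(g,m)\equiv (\text{$g$ is a function})\land \dom g=m\land \forall i\in m\,\phi(g\restricts i,g(i)).
\end{equation*}
The first step is to check that $\gamma$ is itself $\uSigma^j_n$: the conjuncts ``$g$ is a function'' and ``$\dom g=m$'' are $\uDelta_0$; substituting the rudimentary operations $g\restricts i$ and $g(i)$ for $x$ and $y$ in $\phi$ keeps the result $\uSigma^j_n$ by the Levy-Fleischmann analogue of \autoref{Lemma: Sect2-orderedpair-property}; and $\uSigma^j_n$ is closed under $\land$ and under the bounded quantifier $\forall i\in m$ straight from its definition. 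This is the single point where passing to the Levy-Fleischmann hierarchy is used.

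Next I would establish the two standard facts. For uniqueness, if $\gamma(f,m)$ and $\gamma(g,m)$ then induction on $i\le m$ applied to the $\uDelta_0$-formula $f\restricts i=g\restricts i$ — invoking single-valuedness of $F$ at the successor step — yields $f\restricts m=g\restricts m$, hence $f=g$; this uses only $\uDelta^j_0$-Induction, which is subsumed by $\uSigma^j_n$-Induction. For existence, I would run $\uSigma^j_n$-Induction on $m$ for the formula $\exists g\,\gamma(g,m)$, which is $\uSigma^j_n$ because $\uSigma^j_n$ is closed under unbounded $\exists$ (for $n\ge 1$; when $n=0$ the formula lands in $\uSigma^j_1$ and the argument consumes $\uSigma^j_1$-Induction, exactly as in the $n=0$ clause of \autoref{Theorem: Sect2-Corazza-Recursion-n+2}): the base case $m=0$ is witnessed by $\varnothing$, and at the successor step, given $g$ with $\gamma(g,m)$ we use totality of $F$ to pick $y$ with $\phi(g,y)$ and check that $g\cup\{(m,y)\}$ witnesses $\gamma(\cdot,m+1)$.

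Finally I would put $G(m)=y\iff\exists g\,[\gamma(g,m+1)\land g(m)=y]$. Uniqueness makes the witnesses cohere — $\gamma(g,m)$ and $\gamma(h,m')$ with $m\le m'$ force $g=h\restricts m$ — so $G$ is a well-defined class function; for each $m<\omega$ the unique $g$ with $\gamma(g,m)$ is a set equal to $G\restricts m$, and $G(m)=g(m)=F(g\restricts m)=F(G\restricts m)$. I do not anticipate a genuine obstacle: the argument mirrors \autoref{Theorem: Sect2-Corazza-Recursion-n+2}, and the only spot that wants care is the first step, namely that feeding the rudimentary operations $g\restricts i$ and $g(i)$ into a $\uSigma^j_n$-formula does not raise its complexity — which is the Levy-Fleischmann counterpart of \autoref{Lemma: Sect2-orderedpair-property} and relies on the hierarchy's built-in closure under bounded quantifiers.
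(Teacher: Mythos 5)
Your proposal is correct and is exactly the argument the paper intends: the paper offers no written proof beyond the remark that one should re-examine the proof of \autoref{Theorem: Sect2-Corazza-Recursion-n+2}, and you carry that out, correctly isolating the one point that matters --- that closure of $\uSigma^j_n$ under bounded quantifiers keeps $\gamma(g,m)$ at level $\uSigma^j_n$ instead of pushing it to $\Pi^j_{n+1}$. Your caveat that the case $n=0$ still consumes $\uSigma^j_1$-Induction (since $\uSigma^j_0$ is not closed under unbounded $\exists$) is a sensible refinement the paper's statement glosses over.
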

\end{Fleischmann}

Now let us define an abbreviation for some formulas, which is related to iterating $j$. 
\begin{definition}
    Let $\kappa$ be a critical point of $j\colon V\to V$.
    Now consider the following formulas:
    \begin{itemize}
        \item $\prejiter(f,n,x)\equiv \text{``$f$ is a function''} \mathrel{\land} (\dom f = n+1) \mathrel{\land}  (f(0) = x) \mathrel{\land} \forall i\in\omega [0<i\le n\to f(i) = j(f(i+1))].$ 
        \item $\jiter(n,x,y) \equiv n\in\omega \to \exists f [\prejiter(f,n,x)\land f(n)=y]$.
        \item $\critseq(n,y) \equiv \jiter(n,\kappa,y)$.
    \end{itemize}
    We write $y=j^n(\kappa)$ if $\critseq(n,y)$ holds, and this notation will be justified later.
\end{definition}
Corazza \cite[(2.7)--(2.9)]{Corazza2006} called each of the above formulas $\Theta$, $\Phi$, $\Psi$ respectively with a different formulation of $\Phi$: Corazza did not include $f(n)=y$ as a part of $\Phi$ (or $\jiter$ under our notation.)

We can show the following results:
\begin{proposition} \pushQED{\qed}
$\ZFC + \BTEE$ proves the followings: 
    \begin{enumerate}
        \item (\cite[Lemma 2.16]{Corazza2006}) For each $n\in\omega$, $x$, and $y$, if $\prejiter(f,n,x)$, $x$ is an ordinal, and $y=f(n)$, then $y$ is also an ordinal.
        \item (\cite[Lemma 2.17]{Corazza2006}) $\critseq(n,y)$ defines a strictly increasing sequence of ordinals, although the resulting sequence may not be total.
        \item (\cite[Lemma 2.18]{Corazza2006}) $\forall n\in\omega\forall x\forall y\ [\jiter(n,j(x),y)\to \jiter(n+1,x,y)]$
        \item (\cite[Lemma 2.19]{Corazza2006}) $\forall n\in\omega\forall x\forall y\ [\jiter(n,x,y)\to\jiter(n,j(x),j(y))]$.
        \item (\cite[Proposition 4.4(1)]{Corazza2006}) $\forall f\forall g\forall x\forall n\in\omega\ [\jiter(f,n,x)\land \jiter(g,n,x)\to f=g]$.
        \qedhere 
    \end{enumerate}
\end{proposition}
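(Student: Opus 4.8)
The plan is to prove each clause by unwinding the definitions of $\prejiter$, $\jiter$, and $\critseq$ while respecting two features of $\ZFC+\BTEE$. First, although $\BTEE$ supplies no induction for $j$-formulas, the witnessing function $f$ occurring in $\prejiter(f,n,x)$ is an honest \emph{set}, so any property of the form ``$i\mapsto R(f(i))$'' with $R$ a $j$-free relation is a $j$-free property of $i$ with parameter $f$, hence available to ordinary $\ZFC$-induction along the finite ordinal $n+1$. Second, Elementarity may be applied to $\lnot\phi$ as well, so for a $j$-free $\phi$ the statements $\phi(z)$ and $\phi(j(z))$ are equivalent; in particular ``$z$ is an ordinal'' and ``$z<w$'' are both preserved and reflected by $j$. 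I will also use that $j$ fixes every natural number, which $\BTEE$ proves since the Critical Point axiom forces the critical point to be a limit ordinal, hence at least $\omega$.

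For clause (1), given $\prejiter(f,n,x)$ with $x$ an ordinal I would show that the set $B=\{i\le n:\ f(i)\text{ is not an ordinal}\}$ — a set by $j$-free Separation with parameter $f$ — is empty. If not, its least element $i_0$ is positive because $f(0)=x$ is an ordinal, and from $f(i_0)=j(f(i_0-1))$ with $f(i_0-1)$ an ordinal, Elementarity applied to ``$z$ is an ordinal'' makes $f(i_0)$ an ordinal too, contradicting $i_0\in B$. Clause (5) is the same bookkeeping for the $j$-free parametrized formula ``$f(i)=g(i)$'': a least $i_0$ with $f(i_0)\neq g(i_0)$ would be positive, and then $f(i_0)=j(f(i_0-1))=j(g(i_0-1))=g(i_0)$ is a contradiction. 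Clause (2) follows: the values are ordinals by clause (1) with $x=\kappa$ (an ordinal, being a critical point), functionality is clause (5), and strict monotonicity comes from the same least-counterexample scheme applied to $\{i:\lnot(f(i)<f(i+1))\}$ for a witness extended by one step — the base case being $f(0)=\kappa<j(\kappa)=f(1)$ by Critical Point, and the step passing from $f(i-1)<f(i)$ to $f(i)=j(f(i-1))<j(f(i))=f(i+1)$ by Elementarity on ``$z<w$''. That the sequence need not be total is just the remark that $\forall n\,\exists y\,\critseq(n,y)$ is a genuine instance of $\Sigma^j_1$-Induction, which $\BTEE$ lacks.

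For clause (3), from a witness $f$ for $\jiter(n,j(x),y)$ (so $f(0)=j(x)$, $f(n)=y$) I would form the shifted function $g:=\{(0,x)\}\cup\{(i+1,f(i)):i\le n\}$, a set obtained from $f$ by $j$-free operations, with $\dom g=n+2$, $g(0)=x$, $g(1)=f(0)=j(x)=j(g(0))$, and $g(i+2)=f(i+1)=j(f(i))=j(g(i+1))$ for $i<n$; thus $\prejiter(g,n+1,x)$ holds with $g(n+1)=f(n)=y$. For clause (4), instead of assembling $\{(i,j(f(i))):i\le n\}$ by hand — which would need Replacement for a $j$-formula — I would take $g:=j(f)$, which is a set because $j\colon V\to V$. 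Applying Elementarity to the $j$-free facts ``$f$ is a function with $\dom f=n+1$'' and ``$(i,f(i))\in f$'', together with $j(i)=i$ and $j(n+1)=n+1$, shows that $j(f)$ is a function with domain $n+1$ and $j(f)(i)=j(f(i))$ for every $i\le n$; hence $j(f)(0)=j(x)$, $j(f)(i+1)=j(f(i+1))=j(j(f(i)))=j(j(f)(i))$, and $j(f)(n)=j(y)$, which is precisely $\prejiter(j(f),n,j(x))\wedge j(f)(n)=j(y)$, that is, $\jiter(n,j(x),j(y))$.

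The crux — and the only reason these lemmas are not entirely routine — is the discipline of never invoking Replacement or induction for $j$-formulas: each finite recursion in $\prejiter$ has to be analysed as a $j$-free property of a fixed witnessing \emph{set}, and each ``apply $j$ termwise'' step has to be routed through $j$ acting on a set, as in clause (4), rather than through a comprehension schema. Once that is respected, each clause reduces to a short least-counterexample argument together with one application of Elementarity to a $j$-free formula.
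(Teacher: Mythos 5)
Your proof is correct. The paper itself gives no argument for this proposition --- it is stated with a \verb|\qedhere| and simply cites the corresponding lemmas of Corazza's paper --- so there is no in-paper proof to compare against; what you have written is essentially the standard argument from that source, and it respects the two constraints that matter (no Separation, Replacement, or induction for $j$-formulas; Elementarity applied also to negations so that $j$-free properties are both preserved and reflected). Two small remarks: the paper's displayed definition of $\prejiter$ contains a typo ($f(i)=j(f(i+1))$ where $f(i)=j(f(i-1))$ is clearly intended, since $f(i+1)$ falls outside $\dom f$ when $i=n$), and clause (5) as printed has an arity mismatch ($\jiter$ takes $(n,x,y)$, so the intended statement is uniqueness of the $\prejiter$ witness); you silently adopted the correct readings of both, which is the right call. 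The one step worth making explicit in clause (4) is that the identity $j(f)(i)=j(f(i))$ for all $i\le n$ is obtained from a \emph{universally quantified} instance of Elementarity applied to the $j$-free formula $(i,z)\in f$, together with $j\restricts\omega=\mathrm{id}$, rather than by an induction on $i$ over a $j$-formula --- your phrasing already suggests this, and it is exactly why the argument survives without $\Sigma^j_0$-Induction.
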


Also, if we have $\Sigma^j_0$-Induction, then we can prove the following:
\begin{proposition}[{\cite[Proposition 5.3]{Corazza2006}}] \pushQED{\qed} \label{Proposition: Sect2-ConsequencesSigmaj0Ind}
    $\ZFC+\BTEE$ with $\Sigma^j_0$-Induction proves the following: Let $\kappa$ be a critical point of $j\colon V\to V$.
    \begin{enumerate}
        \item For every $n,m\in\omega$, if $j^n(\kappa)$ exists and $m\le n$, then $j^m(\kappa)$ exists.
        \item For every $n\in\omega$ and an ordinal $\alpha$, if $j^n(V_\alpha)$ exists, then $j^n(x)$ exists for all $x\in V_\alpha$.
        \item For every $n\in\omega$ and $x$, if $j^n(x)$ exists, then $j^n(\rank x)$ also exists and $j^n(\rank x)=\rank j^n(x)$.
        \item For every $n\in\omega$ and an ordinal $\alpha$, if $j^n(V_\alpha)$ exists, then so does $j^n(\alpha)$ and $j^n(V_\alpha) = V_{j^n(\alpha)}$.
        \item For every $n\in\omega$ and $x$, if $j^n(x)$ exists, then there is an ordinal $\alpha$ such that $x\in V_\alpha$ and $j^n(V_\alpha)$ exists.
        \qedhere 
    \end{enumerate}
\end{proposition}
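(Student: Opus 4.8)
My plan for establishing the five clauses is to reduce everything to two tools: the Elementarity schema of $\BTEE$, which transfers any $j$-free property from a parameter $a$ to $j(a)$, and a disciplined use of comprehension — either ordinary $\ZFC$-Replacement, which is legitimate only for $j$-free formulas, or $\Sigma^j_0$-Induction applied to a formula that has first been \emph{guarded} by an explicit set bound so that it really is $\Sigma^j_0$. Throughout I would write $f^x_n$ for the function witnessing that $j^n(x)$ exists (unique by the preceding Proposition), so that $\dom f^x_n = n+1$, $f^x_n(0) = x$, $f^x_n(i+1) = j(f^x_n(i))$ for $i < n$, and $j^i(x) = f^x_n(i)$; the existence hypotheses in the statement simply assert that this witness exists.

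For clauses (1), (3), (4), and (5) the point is that the relevant auxiliary sequence is definable by a $j$-free formula with the given witness as a set parameter, so that $\ZFC$-Replacement collects it into a set. For (1) I would just restrict: $f^\kappa_n \restricts (m+1)$ witnesses that $j^m(\kappa)$ exists. For (3), with $f = f^x_n$ I would set $g(i) = \rank(f(i))$; this is $\ZFC$-definable from the set $f$, so $g$ is a set, and Elementarity applied to the $j$-free formula ``$b = \rank(a)$'' gives $\rank(j(a)) = j(\rank(a))$, hence $g(i+1) = j(g(i))$, so $g = f^{\rank x}_n$ and $g(n) = \rank(j^n(x))$. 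For (4), with $f = f^{V_\alpha}_n$ I would first show by induction on $i \le n$ — an induction on the $j$-free formula ``$\exists \gamma\ f(i) = V_\gamma$'', $f$ being a fixed set — that each $f(i)$ has the form $V_\beta$, the successor step invoking Elementarity on ``$a$ is an ordinal $\land\ b = V_a$''; then $g(i) :=$ the ordinal $\beta$ with $f(i) = V_\beta$ is again $\ZFC$-definable from $f$, Elementarity yields $j(V_\beta) = V_{j(\beta)}$ so that $g(i+1) = j(g(i))$, and therefore $g = f^{\alpha}_n$ with $f(n) = V_{g(n)} = V_{j^n(\alpha)}$. For (5) I would apply (3) to get $h = f^{\rank x}_n$, take $\alpha = \rank(x) + 1$ so that $x \in V_\alpha$, and check that $g(i) := V_{h(i)+1}$ is a $\ZFC$-definable-from-$h$ set with $g(i+1) = j(g(i))$ by Elementarity on ``$a$ is an ordinal $\land\ b = V_{a+1}$'', so that $g = f^{V_\alpha}_n$.

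The one clause that genuinely forces the $\Sigma^j_0$-Induction hypothesis, and which I expect to be the main obstacle, is (2). Here the natural witness for $j^n(x)$ would be built by iterating $j$, but the recursion clause $g(i+1) = j(g(i))$ mentions $j$, so $\ZFC$-Replacement is no longer available. My device would be to use the supplied witness $f = f^{V_\alpha}_n$ as a bounding set, proving by $\Sigma^j_0$-Induction on $k \le n$ the statement
\[
P(k):\quad \exists g \subseteq (n{+}1) \times {\textstyle\bigcup}\ran f\ \bigl[\,g\ \text{a function} \land \dom g = k{+}1 \land g(0) = x \land \forall i < k\,(g(i{+}1) = j(g(i))) \land \forall i \le k\,(g(i) \in f(i))\,\bigr],
\]
which is $\Sigma^j_0$ precisely because $g$ is quantified over the fixed set $(n{+}1) \times \bigcup \ran f$. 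The base case holds since $x \in V_\alpha = f(0)$, and in the successor step I would extend $g$ by $g(k{+}1) := j(g(k))$, using $g(k) \in f(k)$ together with $f(k{+}1) = j(f(k))$ and Elementarity on ``$a \in b \to j(a) \in j(b)$'' to conclude $g(k{+}1) \in f(k{+}1)$; then $P(n)$ supplies exactly $f^x_n$. The only real subtlety in the whole proposition is maintaining this discipline — never applying Replacement or Induction to a formula that mentions $j$ without first exhibiting a set that bounds every quantified object — and the ``$j^n(\cdots)$ exists'' hypotheses are exactly what supply those bounding sets.
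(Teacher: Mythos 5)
Your proposal is correct, and it reconstructs essentially the argument the paper is relying on: the paper itself supplies no proof here but simply cites Corazza's Proposition 5.3, whose proof proceeds exactly as you do --- $j$-free Replacement from the witnessing set for clauses (1), (3), (4), (5), and a $\Sigma^j_0$-Induction with the quantifier over the partial witness guarded by a set built from $f^{V_\alpha}_n$ for clause (2). The only cosmetic point is that $\exists g \subseteq t$ should be read as $\exists g \in \mathcal{P}(t)$ with the power set taken as a parameter, since subset-bounded quantifiers are not literally bounded in the Levy sense; with that reading your $P(k)$ is genuinely $\Sigma^j_0$ and the induction goes through.
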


If we further assume $\Sigma^j_1$-Induction, we can see that iterating $j$ defines a total function:
\begin{proposition}[{\cite[Proposition 4.4]{Corazza2006}}] \pushQED{\qed} \label{Proposition: Sect2-ConsequencesSigmaj1Ind}
    $\ZFC+\BTEE$ with $\Sigma^j_1$-Induction proves the following:
    \begin{enumerate}
        \item For given $n<\omega$ and $x$, there is a unique $y$ such that $\jiter(n,x,y)$. In other words, $j^n(x)$ exists for all $n<\omega$.
        \item For given $n<\omega$, there is a unique ordinal $y$ such that $\critseq(n,y)$. In other words, $j^n(\kappa)$ exists for all $n<\omega$.
        \qedhere 
    \end{enumerate}
\end{proposition}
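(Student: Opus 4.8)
The plan is to establish (1) by $\Sigma^j_1$-Induction on $n$ with the parameter $x$ held fixed, and then to read off (2) as the special case $x=\kappa$. The essential preliminary point is a complexity count. Since $j$ is a genuine unary function symbol, every term $j(f(i))$ denotes a set, so in $\prejiter(f,n,x)$ the only quantifiers are the bounded one ranging over indices $i\le n<\omega$ together with the implicit bounded quantifiers needed to speak of $f(i)\in\ran f$; hence $\prejiter(f,n,x)$ is a bounded ($\Delta^j_0$) formula, and therefore $\theta(n)\equiv\exists f\,\prejiter(f,n,x)$ is $\Sigma^j_1$. This is precisely the shape to which $\Sigma^j_1$-Induction applies.

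First I would verify the base case and the inductive step for $\theta$. For $n=0$ the singleton $f=\{\langle 0,x\rangle\}$ satisfies $\prejiter(f,0,x)$. For the step, assume $\theta(n)$ and fix $f$ with $\prejiter(f,n,x)$; then $g=f\cup\{\langle n+1,j(f(n))\rangle\}$ is a set — this is the only place totality of $j$ is used — and it is readily checked to be a function with $\dom g=n+2$, $g(0)=x$, and the required recursion, so $\prejiter(g,n+1,x)$ and thus $\theta(n+1)$. Hence $\Sigma^j_1$-Induction gives $\forall n<\omega\,\theta(n)$, and choosing a witness $f$ and putting $y=f(n)$ yields $\jiter(n,x,y)$. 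Uniqueness of $y$ requires no induction on $j$-formulas: if $\jiter(n,x,y)$ and $\jiter(n,x,y')$ with witnesses $f$ and $f'$, then $\Delta_0$-induction on $i\le n\to f(i)=f'(i)$ (available already in $\ZF$, and also recorded above as \cite{Corazza2006}, Proposition 4.4(1)) gives $f=f'$, whence $y=y'$. This proves (1), i.e.\ that $j^n(x)$ exists for all $n<\omega$.

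For (2) one applies (1) with $x=\kappa$: for each $n<\omega$ there is a unique $y$ with $\jiter(n,\kappa,y)$, equivalently with $\critseq(n,y)$; and since $\kappa$ is an ordinal, the fact recorded earlier that $\prejiter(f,n,x)$ sends an ordinal $x$ to an ordinal $f(n)$ (\cite{Corazza2006}, Lemma 2.16) shows that $y$ is an ordinal. The one point that genuinely needs care is the complexity bookkeeping flagged above: the quantifier $\exists f$ in $\theta$ cannot be bounded a priori, since without already knowing $j^k(x)$ we have no bound on the rank of a witnessing sequence — this is exactly why $\Sigma^j_1$-Induction, rather than the weaker $\Sigma^j_0$-Induction of \autoref{Proposition: Sect2-ConsequencesSigmaj0Ind}, is the hypothesis that makes $j$ iterate into a total function.
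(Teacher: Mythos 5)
Your proof is correct and follows essentially the same route as the paper's: the paper defers to Corazza for this proposition, but its own proof of \autoref{Theorem: Sect2-Corazza-Recursion-n+2}(2) is precisely your argument --- apply $\Sigma^j_1$-Induction to the $\Sigma^j_1$-formula $\exists f\,\prejiter(f,n,x)$ (noting $\prejiter$ is $\Delta^j_0$ because the quantifier over the index $i$ is bounded by $\omega$), extend a witness by one step using totality of the function symbol $j$, and get uniqueness from ordinary $\Delta_0$-induction on the $j$-free formula $i\le n\to f(i)=f'(i)$. Your complexity bookkeeping, including the point that the unbounded $\exists f$ is exactly what forces $\Sigma^j_1$- rather than $\Sigma^j_0$-Induction, matches the paper's treatment.
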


We can see that $\Sigma^j_n$-Induction is equivalent to $\Pi^j_n$-Induction.
The author included a proof for the implication by modifying a proof of \cite[Corollary 5.4]{Corazza2006} based on Don Hatch's unpublished result. 
However, the reviewer pointed out that the following argument is folklore known in the 1970s at the latest. \cite[\S1(D)]{Parsons1972nQInd}
\begin{proposition}[$\mathsf{Z}^-_0$]
    $\Sigma^j_n$-Induction implies $\Pi^j_n$-Induction, and vice versa.
\end{proposition}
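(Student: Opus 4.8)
The plan is to derive $\Pi^j_n$-Induction from $\Sigma^j_n$-Induction over $\mathsf{Z}^-_0$; the converse implication is obtained by the mirror-image argument, and the case $n=0$ is trivial since $\Sigma^j_0=\Pi^j_0$, so I will take $n\ge 1$. Following Hatch's idea (the source of Corazza's Corollary~5.4), the key move is to \emph{reflect the induction variable} through truncated subtraction: a hypothetical counterexample to $\Pi^j_n$-Induction is rewritten as a legitimate instance of $\Sigma^j_n$-Induction whose matrix never acquires a bounded quantifier standing in front of a $j$-formula, so that no complexity is lost.

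In detail, let $\psi(k)$ be a $\Pi^j_n$-formula (parameters allowed) with $\psi(0)$ and $\forall k\,(\psi(k)\to\psi(k+1))$, fix $m<\omega$, and --- arguing by contraposition --- suppose $\lnot\psi(m)$; I aim for a contradiction. I would apply $\Sigma^j_n$-Induction to
\[
    \phi(k)\ :\equiv\ (k\le m)\to\lnot\psi\bigl(m\mathbin{\dot-}k\bigr),
\]
where $m\mathbin{\dot-}k$ is truncated subtraction on $\omega$ (equal to $m-k$ when $k\le m$ and to $0$ otherwise; its graph is $\Delta_0$ over $\mathsf{Z}^-_0$), and $\lnot\psi(m\mathbin{\dot-}k)$ is read formally as $\exists l\,[\,l=m\mathbin{\dot-}k\land\lnot\psi(l)\,]$. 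The first thing to check is that $\phi$ is equivalent over $\mathsf{Z}^-_0$ to a $\Sigma^j_n$-formula: $\lnot\psi(l)$ is $\Sigma^j_n$, the predicates $l=m\mathbin{\dot-}k$ and $k\le m$ are $\Delta_0$, and $(k\le m)\to\lnot\psi(m\mathbin{\dot-}k)$ is equivalent to $\lnot(k\le m)\lor\exists l\,[l=m\mathbin{\dot-}k\land\lnot\psi(l)]$, which is $\Sigma^j_n$ by \autoref{Lemma: Sect2-jPrenexFormReducing} (closure of $\Sigma^j_n$ under disjunction and conjunction with $\Delta_0$-formulas and under a leading $\exists$). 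The base case $\phi(0)$ is $(0\le m)\to\lnot\psi(m)$, which holds by assumption; for the successor step, if $\phi(k)$ holds and $k+1\le m$, then $k\le m$ and $m\mathbin{\dot-}k=(m\mathbin{\dot-}(k+1))+1$, so $\phi(k)$ gives $\lnot\psi\bigl((m\mathbin{\dot-}(k+1))+1\bigr)$ and the contrapositive of $\psi(k)\to\psi(k+1)$ yields $\lnot\psi(m\mathbin{\dot-}(k+1))$, i.e.\ $\phi(k+1)$. Hence $\Sigma^j_n$-Induction gives $\forall k\,\phi(k)$; instantiating at $k=m$ yields $(m\le m)\to\lnot\psi(0)$, so $\lnot\psi(0)$, contradicting $\psi(0)$. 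Therefore $\psi(m)$ for every $m$, which is $\Pi^j_n$-Induction for $\psi$.

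I expect the only real obstacle to be the complexity bookkeeping: one has to be sure the rewritten formula $\phi$ is genuinely $\Sigma^j_n$. This is precisely why I would avoid the more common route via the least-number principle, whose natural auxiliary formula $\forall i\le k\,\lnot\psi(i)$ puts a bounded universal quantifier in front of the $\Pi^j_n$-formula $\lnot\psi$; without Collection for $j$-formulas there is no reason this remains $\Pi^j_n$ unless one already has the very induction being proved. The subtraction reformulation sidesteps this, needing only the handful of $\Delta_0$ facts about $\mathbin{\dot-}$ on $\omega$ used above. The converse, $\Pi^j_n$-Induction $\Rightarrow$ $\Sigma^j_n$-Induction, runs through the same $\phi$ with $\Sigma^j_n$ and $\Pi^j_n$ interchanged, using that the negation of a $\Sigma^j_n$-formula is $\Pi^j_n$ and that (again by \autoref{Lemma: Sect2-jPrenexFormReducing}) $\Pi^j_n$ is closed under the analogous Boolean combinations with $\Delta_0$-formulas and under a leading $\forall$.
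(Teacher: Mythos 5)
Your proof is correct and follows essentially the same route as the paper: both arguments take a hypothetical counterexample $N$ (your $m$) to $\Pi^j_n$-Induction, apply $\Sigma^j_n$-Induction to the reversed formula $(k\le m)\to\lnot\psi(m\mathbin{\dot-}k)$, and derive a contradiction with $\psi(0)$ at $k=m$. Your complexity bookkeeping via \autoref{Lemma: Sect2-jPrenexFormReducing} is a bit more explicit than the paper's, which simply asserts the auxiliary formula is equivalent to a $\Sigma^j_n$-formula, but the substance is identical.
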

\begin{proof}
    We only consider the implication from $\Sigma^j_n$-Induction to $\Pi^j_n$-Induction since the remaining case is analogous.
    The case $n=0$ is trivial, so let us consider the case $n>0$. Assume the contrary, suppose that $\phi(n,p)$ is a $\Pi^j_n$-formula that satisfy $\phi(0,p)$ and $\forall n<\omega [(\phi(n,p)\to\phi(n+1,p)]$, but $\forall n<\omega\phi(n,p)$ fails. Thus there is $N\in\omega$ such that $\lnot\phi(N,p)$.

    Now consider the formula $\psi(n,p,N)$ given by
    \begin{equation*}
        \psi(n,p,N)\equiv n\le N \to \lnot\phi(N-n,p).
    \end{equation*}
    $\psi(n,p,N)$ is equivalent to a $\Sigma^j_n$-formula. We claim that induction on $n$ for $\psi$ fails. Assume the contrary that the induction for $\psi$ holds. Clearly $\psi(0,p,N)$ holds. If $\psi(n,p,N)$ holds and $n+1\le N$, then we have $\psi(n+1,p,N)$ from $\phi(N-n-1,p)\to\phi(N-n,p)$.
    Hence $\psi(n,p,N)$ holds for all $n$, and especially, $\psi(N,p,N)$ holds, which is equivalent to $\lnot\phi(0,p)$, a contradiction.
\end{proof}

    Finally, let us define the Wholeness axiom as the combination of $\BTEE$ and $\Sigma^j_n$-Separation. We know that $\Sigma^j_n$-Separation implies $\Sigma^j_n$-Induction, so we can use consequences of $\Sigma^j_n$-Induction when we assume $\WA_n$.
Also, we have the following results:
\begin{proposition} \pushQED{\qed}
    $\ZFC+\BTEE$ shows $\Sigma^j_0$-Separation is equivalent to the claim that $j\restricts a$ is a set for any set $a$. \qedhere 
\end{proposition}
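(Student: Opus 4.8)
The plan is to prove the two implications separately; in both directions the only feature of $\BTEE$ used is elementarity, to keep track of the ranks of $j$-images.

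\emph{From $\Sigma^j_0$-Separation to ``$j\restricts a$ is a set''.} Fix a set $a$ and put $\alpha=\rank a$. Applying elementarity to the $j$-free, parameter-free statement ``$V_\alpha$ is the $\alpha$-th level of the cumulative hierarchy'' gives $j(V_\alpha)=V_{j(\alpha)}$; since $a\subseteq V_\alpha$ and $j$ preserves $\in$, we get $j(x)\in V_{j(\alpha)}$ for every $x\in a$, so every Kuratowski pair $\langle x,j(x)\rangle$ with $x\in a$ lies in $V_\beta$ for $\beta=\max(\alpha,j(\alpha))+2$, a set that exists by $\ZFC$. Unfolding the Kuratowski pair by means of \autoref{Lemma: Sect2-Basics}, the formula $\psi(p)\equiv\exists x\in a\,(p=\langle x,j(x)\rangle)$ becomes a $\Sigma^j_0$-formula: the only occurrences of $j$ sit inside atomic subformulas such as $j(x)\in w$ and $u=j(x)$, and no unbounded quantifier is introduced. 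Hence $\Sigma^j_0$-Separation applied to $V_\beta$, with $a$ as a parameter, produces the set $\{p\in V_\beta:\psi(p)\}=j\restricts a$.

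\emph{From ``$j\restricts a$ is a set'' to $\Sigma^j_0$-Separation.} Let $\phi(x,\vec p)$ be $\Sigma^j_0$, let $a$ be a set and $\vec p$ parameters, and pick an ordinal $\alpha_0$ with $a,\vec p\in V_{\alpha_0}$. Let $N<\omega$ be larger than the number of symbols of $\phi$; since $j$ is total, the $N$-fold image $j^N(\alpha_0)$ exists. Put $W=V_{j^N(\alpha_0)}$ and $F=j\restricts W$, which is a set by hypothesis. A routine induction on the construction of $\phi$ shows that, when $\phi(x,\vec p)$ is evaluated on some $x\in a$, every value taken by a subterm of $\phi$ and every witness of a bounded quantifier lies in $W$, and every application of $j$ in the evaluation is applied to an element of $W$. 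Thus, rewriting each term $j^k(y)$ as the chain of $k$ successive lookups in $F$ turns $\phi$ into a $j$-free bounded formula $\phi^*(x,\vec p,F,W)$ for which $\ZFC+\BTEE$, together with the hypothesis, proves $\phi(x,\vec p)\lr\phi^*(x,\vec p,F,W)$ for all $x\in a$. The ordinary Separation schema of $\ZFC$ then yields $\{x\in a:\phi^*(x,\vec p,F,W)\}=\{x\in a:\phi(x,\vec p)\}$ as a set.

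The delicate point is entirely in the second implication: one must fix a \emph{standard} bound $N$ depending only on the code $\phi$ and verify, by induction on the build-up of $\phi$, that the finitely many ``layers of $j$'' that accumulate while $\phi$ is evaluated on an element of $a$ never exceed $N$, so that $F=j\restricts V_{j^N(\alpha_0)}$ really computes $j$ on every argument that arises. This is precisely where boundedness of $\phi$ is essential --- an unbounded quantifier would carry the evaluation past any prescribed $V_{j^N(\alpha_0)}$ --- and once it is in hand, the equivalence $\phi\lr\phi^*$ on $a$ is nothing but the standard absoluteness of $\Delta_0$-formulas for transitive structures, with the symbol $j$ interpreted by the set $F$.
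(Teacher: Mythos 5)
Your argument is correct and is essentially the standard one: the paper states this proposition without proof (it is imported from Corazza), and both directions you give --- carving $j\restricts a$ out of a suitable $V_\beta$ by $\Sigma^j_0$-Separation applied to the $\Delta^j_0$ formula $\exists x\in a\,(p=\langle x,j(x)\rangle)$, and conversely eliminating $j$ from a $\Delta^j_0$ formula in favour of the set parameter $F=j\restricts V_{j^N(\alpha_0)}$ so that ordinary Separation applies --- are the expected ones. The one detail worth making explicit in the second direction is the inequality $j^k(\alpha_0)\le j^N(\alpha_0)$ for $k\le N$ (equivalently $\beta\le j(\beta)$ for ordinals $\beta$), which over $\ZFC+\BTEE$ alone requires a least-counterexample argument that your hypothesis ``$j\restricts a$ is a set for every $a$'' does make available; alternatively one can sidestep it by taking $W=V_\gamma$ with $\gamma=\max\{j^k(\alpha_0)\mid k\le N\}$.
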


\begin{proposition} \pushQED{\qed} \label{Proposition: Sect2-ConsequenceWA0}
    $\ZFC+\WA_0$ proves the following: 
    \begin{enumerate}
        \item (\cite[Proposition 8.4]{Corazza2006}) For any ordinal $\alpha$, we can find $n<\omega$ such that $\alpha\le j^n(\kappa)$, where $\kappa$ is a critical point of $j$.
        \item (\cite[Lemma 8.6.(2)]{Corazza2006}) For every $M$, if $j^n(M)$ exists, then $j^n\restricts M\colon M\to j^n(M)$ is an elementary embedding.
        \item (\cite[Lemma 8.6.(3)]{Corazza2006}) For every $n\ge 1$, $j^n\restricts V_\kappa$ is the identity map.
        \item (\cite[Theorem 8.8]{Corazza2006}) For each $n<\omega$, if $j^n(\kappa)$ exists, then $V_\kappa\prec V_{j(\kappa)}\prec\cdots \prec V_{j^n(\kappa)}$. Also, under the same assumption, $V_{j^n(\kappa)}\prec V$.
        \qedhere 
    \end{enumerate}
\end{proposition}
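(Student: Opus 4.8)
The plan is to establish the four clauses in the order (2), (3), (1), (4): the first three are essentially independent, while (4) uses (1) and (3). Work in $\ZFC+\WA_0$ and write $\kappa$ for the critical point. The one piece of infrastructure I would set up first is that iteration of $j$ is \emph{$\Delta^j_0$-definable}: since ``$y=j(z)$'' is atomic, the ``apply $j$ to the last value'' operation is a $\Delta^j_0$-definable class function (with a parameter), so by \autoref{Corollary: Sect2-optimized-recursion} the maps $n\mapsto j^n(x)$ and $n\mapsto j^n(V_\kappa)=V_{j^n(\kappa)}$ are $\Delta^j_0$-definable. Combined with $\Sigma^j_0$-Separation — equivalently, that $j\restricts a$ is a set for every $a$ — this keeps the relevant auxiliary predicates low enough to use $\Sigma^j_0$-Separation and $\Sigma^j_0$-Induction freely; e.g.\ ``$j^n(\kappa)\le\alpha$'' is provably equivalent to the bounded formula ``$\exists y\in\alpha+1\,\critseq(n,y)$''.

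Clause (2) is pure elementarity: for $\phi$ without $j$ and $\vec a\in M$, the relativization $\phi^M(\vec a)$ (with $M$ as a parameter) is a formula without $j$, so applying Elementarity to it and to its negation gives $\phi^M(\vec a)\leftrightarrow\phi^{j(M)}(j(\vec a))$; since $j(\vec a)=(j\restricts M)(\vec a)$, $j\restricts M\colon M\to j(M)$ is elementary, and iterating $n$ times (a $\Sigma^j_0$-induction on $n$ using the infrastructure above) gives the claim. For clause (3): $\kappa$ is a limit ordinal (else $j(\kappa)=j(\beta)+1=\kappa$), and $j\restricts V_\kappa=\operatorname{id}$ is proved by induction on rank, packaged as: the set $\{\xi<\kappa:\exists x\in V_\kappa\,(\rank x=\xi\wedge j(x)\neq x)\}$, formed by $\Sigma^j_0$-Separation with parameter $V_\kappa$, is empty. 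In the inductive step with $\xi=\rank x$, Elementarity on ``$w\in x$'' gives $x\subseteq j(x)$, Elementarity on ``$w\notin x$'' together with $j\restricts V_\xi=\operatorname{id}$ gives $j(x)\cap V_\xi\subseteq x$, and $\rank j(x)=j(\rank x)=\rank x$ (by \autoref{Proposition: Sect2-ConsequencesSigmaj0Ind}(3)) gives $j(x)\subseteq V_\xi$; hence $j(x)=x$. Then $j^n\restricts V_\kappa=\operatorname{id}$ for $n\ge1$ follows since $j$ fixes $V_\kappa$ pointwise.

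Clause (1) is the heart. Suppose for contradiction $j^n(\kappa)<\delta_0$ for all $n$, for some ordinal $\delta_0$. Then the critical sequence $c=\langle j^n(\kappa):n<\omega\rangle$ is $\Delta^j_0$-definable with range contained in $\delta_0$, so its graph, hence $c$ and $\ran c$, are sets by $\Sigma^j_0$-Separation; $\dom c$ is an initial segment of $\omega$ closed under successor (as $c(m-1)$ defined forces $c(m)=j(c(m-1))$ defined), so $\dom c=\omega$, and $\delta:=\sup\ran c$ is an ordinal with $\kappa<\delta\le\delta_0$. Applying $j$ to the $j$-free facts ``$c$ surjects onto $\ran c$'' and ``$\delta=\sup\ran c$'' yields $j(\delta)=\sup(\ran c\setminus\{\kappa\})=\delta$. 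Now $e:=j\restricts V_{\delta+2}$ is a set by $\Sigma^j_0$-Separation; since $j(\delta)=\delta$ it maps $V_{\delta+2}$ into itself, it is elementary (``$V_{\delta+2}\models\psi[\vec a]$'' is without $j$, hence preserved in both directions, and $j$ fixes $V_{\delta+2}$), and it is nontrivial as $e(\kappa)=j(\kappa)>\kappa$ — contradicting Kunen's inconsistency theorem. Hence the critical sequence is unbounded, which is (1); being infinite, $c$ is then total.

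For clause (4): for each $j$-free $\phi$ I would prove $V_\kappa\prec_\phi V$ (i.e.\ $\forall\vec a\in V_\kappa\,(\phi^{V_\kappa}(\vec a)\leftrightarrow\phi(\vec a))$) by meta-induction on $\phi$, the only interesting case being $\phi=\exists x\,\theta$. Since $\theta$ is without $j$, the Skolem-type function $\rho_\theta(\vec y)=$``least $\beta$ with $\exists x\in V_\beta\,\theta(x,\vec y)$, else $0$'' is definable without $j$ (here the full strength of $\ZFC$ is available, as $\theta$ is $j$-free), so for $\vec a\in V_\kappa$, Elementarity together with $j\restricts V_\kappa=\operatorname{id}$ gives $j(\rho_\theta(\vec a))=\rho_\theta(j(\vec a))=\rho_\theta(\vec a)$; but by (1) no ordinal $\geq\kappa$ is fixed by $j$ — a fixed $\alpha>\kappa$ satisfies $\forall n\,(j^n(\kappa)\leq\alpha)$ by a $\Sigma^j_0$-induction, contradicting cofinality — so $\rho_\theta(\vec a)<\kappa$ and a witness to $\theta(x,\vec a)$ lies in $V_\kappa$, the equivalence with $\theta^{V_\kappa}$ being the induction hypothesis. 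Finally ``$V_y\prec_\phi V$'' is a formula without $j$ in $y$, so Elementarity applied $n$ times upgrades $V_\kappa\prec_\phi V$ to $V_{j^n(\kappa)}\prec_\phi V$; combining these (with $V_{j^m(\kappa)}\subseteq V_{j^{m+1}(\kappa)}$) gives the chain $V_\kappa\prec V_{j(\kappa)}\prec\cdots\prec V_{j^n(\kappa)}\prec V$. The step I expect to be hardest is (1) and its infrastructure: with only $\Sigma^j_0$-Separation, every ``obvious'' set — the critical sequence, its supremum — must be produced by hand with $\Delta^j_0$ predicates, and the terminal contradiction must be channelled through a genuine $\ZFC$ theorem about \emph{set}-sized embeddings (Kunen), available precisely because $\Sigma^j_0$-Separation cuts $j$ down to the set $j\restricts V_{\delta+2}$; by contrast (4) is smooth once one notices it only ever concerns $j$-free formulas.
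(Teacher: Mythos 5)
The paper does not prove this proposition at all: it is imported as a black box from Corazza's work (note the \verb|\pushQED{\qed}| — each clause is a citation), so any proof you give is necessarily ``a different route.'' What you have written is in substance a correct reconstruction of Corazza's own arguments: clause (2) by relativizing $j$-free formulas, clause (3) by $\in$-induction below $\kappa$ using $\rank j(x)=\rank x$, clause (1) by bounding the critical sequence inside a set, extracting a fixed point $\delta$ of $j$, and feeding the \emph{set} embedding $j\restricts V_{\delta+2}\colon V_{\delta+2}\to V_{\delta+2}$ to Kunen's theorem, and clause (4) by the Skolem-rank function $\rho_\theta$ together with the fact that no ordinal $\ge\kappa$ is $j$-fixed. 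This is exactly the intended mechanism, and your closing remark correctly identifies where the work is: under $\WA_0$ every auxiliary object must be manufactured from $\Sigma^j_0$-Separation. One caveat on your ``infrastructure'' paragraph: the claim that $n\mapsto j^n(x)$ is $\Delta^j_0$-definable outright via \autoref{Corollary: Sect2-optimized-recursion} is too strong — the defining formula $\jiter(n,x,y)$ carries an unbounded $\exists f$ and is only $\Sigma^j_1$ (and its totality needs $\Sigma^j_1$-Induction, which $\WA_0$ does not obviously supply; this is precisely why clauses (1) and (4) are stated conditionally on existence of $j^n(\kappa)$). The statement ``$j^n(\kappa)\le\alpha$'' likewise is not literally bounded. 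Your proof survives because in every place you actually use this, a bound on the values is available, so the witnessing iteration sequence $f$ can be confined to $V_{\delta_0+\omega}$ and the predicate genuinely becomes $\Delta^j_0$ (using uniqueness of iteration sequences to see that any witness is so confined); similarly the inductions in (2) and (4) should be run on $j$-free formulas with the set-sized witnesses $f$, $j\restricts V_\alpha$ as parameters, where full $\ZFC$ induction is available and only the inductive \emph{step} invokes elementarity. I would also drop the parenthetical ``being infinite, $c$ is then total'': totality of the critical sequence is not part of the statement and is more delicate than unboundedness.
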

We should be careful when we talk about $V_{j^n(\kappa)}\prec V$. This statement is actually a schema on a formula, and what it says is for each formula $\phi(x)$ and $a\in V_{j^n(\kappa)}$, we have $\phi(a)\lr V_{j^n(\kappa)}\vDash\phi(a)$. 
Finally, we define $\WA$ as the combination of $\BTEE$ and $\Sigma^j_\infty$-Separation, that is, Separation for arbitrary $j$-formulas.

\begin{Fleischmann}
\subsection{Variations of $\WA$} In this subsection, we define variants of $\WA_n$ based on alternative collection of formulas. 
In the previous subsection on hierarchies of formulas, we defined $\uSigma^j_n$-formulas. As we defined $\WA_n$ as a combination of $\BTEE$ and $\Sigma^j_n$-Separation, let us define $\uWA_n$ as a combination of $\BTEE$ and $\uSigma^j_n$-formulas. It is clear that $\uSigma^j_n$-Separation implies $\uSigma^j_n$-Induction. Moreover, by following the standard proof of Induction on $\omega$, we have the following:
\begin{lemma}\pushQED{\qed}
    Let $n\ge 1$. Then $\uSigma^j_n$-Induction proves $\uSigma^j_1$-Recursion: For a $\uSigma^j_n$-definable class function $F\colon V\to V$, we can find a $\uSigma^j_b$-definable class function $G\colon \omega\to V$ such that $G\restricts n$ is a set for each $n<\omega$ and $G(n)=F(G\restricts n)$.
    \qedhere 
\end{lemma}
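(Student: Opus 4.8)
The plan is to run the recursion construction in the proof of \autoref{Theorem: Sect2-Corazza-Recursion-n+2} unchanged, but to replace the ad hoc quantifier manipulation behind \autoref{Corollary: Sect2-optimized-recursion} by the single observation that the Levy--Fleischmann hierarchy is closed under bounded quantifiers. So fix a $\uSigma^j_n$-formula $\phi(x,y)$ defining $F(x)=y$ and put
\begin{equation*}
    \gamma(g,m)\equiv \text{``$g$ is a function''}\land \dom g=m\land \forall i\in m\,\phi(g\restricts i,g(i)).
\end{equation*}
The one step that needs care is the complexity count: ``$g$ is a function'' and $\dom g=m$ are $\Delta_0$ by \autoref{Lemma: Sect2-orderedpair-property}; the maps $(g,i)\mapsto g\restricts i$ and $(g,i)\mapsto g(i)$ are rudimentary (simple in Jensen's sense), so substituting them into $\phi$ leaves the result $\uSigma^j_n$, exactly as for $\pi_0,\pi_1$ in the second part of \autoref{Lemma: Sect2-orderedpair-property}; and prefixing the bounded quantifier $\forall i\in m$ and conjoining with the two $\Delta_0$ clauses keeps it $\uSigma^j_n$, since $\uSigma^j_n$ is closed under $\land$ and bounded quantifiers. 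This is the precise point at which the same argument would fail over the plain Levy hierarchy, where $\forall i\in m$ in front of a $\Sigma^j_n$-formula is only $\Pi^j_{n+1}$ --- which is the source of the ``$+2$'' in \autoref{Theorem: Sect2-Corazza-Recursion-n+2}.

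Next I would prove the two standard facts about $\gamma$. \emph{Uniqueness}: if $\gamma(f,m)$ and $\gamma(g,m)$, then $f=g$; this follows by $\uSigma^j_n$-Induction (indeed $\Delta_0$-Induction) applied to ``$i\le m\to f\restricts i=g\restricts i$'', using at the successor step that $f\restricts i=g\restricts i$ together with $\phi(f\restricts i,f(i))$, $\phi(g\restricts i,g(i))$ and the functionality of $F$ force $f(i)=g(i)$. \emph{Existence}: $\forall m<\omega\,\exists g\,\gamma(g,m)$; here I apply $\uSigma^j_n$-Induction to $\exists g\,\gamma(g,m)$, which is $\uSigma^j_n$ because $\uSigma^j_n$ is also closed under unbounded $\exists$ --- the base case being $g=\varnothing$, and the successor step passing from $g$ with $\gamma(g,m)$ to $g'=g\cup\{\langle m,F(g)\rangle\}$, where the new instance $\phi(g'\restricts m,g'(m))$ is exactly $\phi(g,F(g))$ and the instances below $m$ are unaffected.

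Finally I would define
\begin{equation*}
    G(m)=y\iff \exists g\,[\gamma(g,m+1)\land g(m)=y],
\end{equation*}
which is once more $\uSigma^j_n$ by the same closure properties. By uniqueness the witness $g$ is determined, so $G$ is a genuine class function on $\omega$; and since $\gamma(g\restricts(m+1),m+1)$ holds whenever $\gamma(g,k)$ does and $m+1\le k$, the unique $g$ with $\gamma(g,k)$ satisfies $g(m)=G(m)$ for all $m<k$, whence $G\restricts k=g$ is a set. Reading off the $i=m$ clause of $\gamma(g,m+1)$ for the witness $g$ gives $\phi(G\restricts m,G(m))$, i.e.\ $G(m)=F(G\restricts m)$, as required. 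The only genuine obstacle is the complexity bookkeeping of the first paragraph --- in particular checking that the terms $g\restricts i$, $g(i)$ can be absorbed without introducing any unbounded quantifier --- after which the rest is the usual induction-on-$\omega$ argument inside $\mathsf{Z}^-_0+\uSigma^j_n$-Induction.
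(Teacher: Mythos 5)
Your proposal is correct and is exactly the argument the paper intends: the lemma is stated with its proof omitted as ``the standard proof of recursion on $\omega$'' (i.e.\ rerunning the construction of \autoref{Theorem: Sect2-Corazza-Recursion-n+2}), and your write-up supplies precisely that, with the key point correctly identified as the closure of $\uSigma^j_n$ under bounded quantifiers (and, for absorbing the terms $g\restricts i$, $g(i)$, under unbounded $\exists$), which is what eliminates the ``$+2$'' of the Levy-hierarchy version. You also implicitly and correctly read the statement's ``$\uSigma^j_b$'' and ``$\uSigma^j_1$-Recursion'' as typos for $\uSigma^j_n$.
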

\end{Fleischmann}

\subsection{Reducing axioms}
Before finishing this section, let us introduce a form of $\Sigma_n^j$-Separation that uses a minimal number of quantifiers.

\begin{definition}\label{Definition: ReducedAxioms}
    \emph{Reduced $\Gamma$-Separation scheme} is the statement of the form $\forall a\exists b \Sep'_\phi(a,b)$ for a $\Gamma$-formula $\phi(u)$ with a only single free variable $u$, where $\Sep'_\phi(a,b)$ is the following formula:
    \begin{equation*}\label{Formula: Sect3-InstanceSimpleWholeness}
        \forall x\in a(\phi(x)\to x\in b) \land \forall x\in b (x\in a\land \phi(x)).
    \end{equation*}
\end{definition}

\begin{lemma}[$\mathsf{Z}_0^-+\Delta_0$-Replacement]\label{Lemma: SimplifySeparation}
    Let $\Gamma$ be one of either $\Delta^j_0$, $\Sigma^j_n$, or $\Pi^j_n$.
    Then the reduced $\Gamma$-Separation proves the usual $\Gamma$-Separation.
\end{lemma}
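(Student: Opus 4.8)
The plan is to reduce the full schema to its reduced, single-free-variable form by coding the parameters of $\phi$ into the set over which we separate. Fix a $\Gamma$-formula $\phi$ whose free variables are among $x,p_1,\dots,p_k$; we must produce, for every $a$ and $\vec p=(p_1,\dots,p_k)$, a set $b$ with $\forall x\in a(\phi(x,\vec p)\to x\in b)\land\forall x\in b(x\in a\land\phi(x,\vec p))$. If $k=0$ this is already an instance of reduced $\Gamma$-Separation, so from now on assume $k\ge 1$ and fix $a$ and $\vec p$.

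First I would set $c=\langle p_1,\langle p_2,\dots,p_k\rangle\cdots\rangle$ (so $c=p_1$ when $k=1$), which exists by Pairing. Since the map $x\mapsto\langle x,c\rangle$ is $\Delta_0$-definable with parameter $c$ by \autoref{Lemma: Sect2-Basics}, $\Delta_0$-Replacement provides the set $a^{*}=\{\langle x,c\rangle : x\in a\}$. This is the one place where the hypothesis $\Delta_0$-Replacement is genuinely needed: over $\mathsf{Z}_0^-$ alone, which lacks Power Set, there need be no set large enough to carve $a^{*}$ out by bounded Separation. Next, with $u$ a fresh variable, let $\theta(u)$ be the formula obtained from $\phi$ by replacing every free occurrence of $x$ by the term $\pi_0(u)$, every free occurrence of $p_i$ (for $1\le i\le k-1$) by $\pi_0(\pi_1^{i}(u))$, and every free occurrence of $p_k$ by $\pi_1^{k}(u)$. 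Then $u$ is the only free variable of $\theta$, and by correctness of the Kuratowski pair (\autoref{Lemma: Sect2-Basics}) we have $\theta(\langle x,c\rangle)\lr\phi(x,\vec p)$ for every $x$.

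Granting for the moment that $\theta$ is again a $\Gamma$-formula, the rest is immediate: reduced $\Gamma$-Separation applied to $\theta$ and $a^{*}$ yields $b^{*}$ with $\Sep'_\theta(a^{*},b^{*})$, i.e.\ $b^{*}=\{\langle x,c\rangle : x\in a\land\phi(x,\vec p)\}$; then $u\mapsto\pi_0(u)$ is $\Delta_0$-definable by \autoref{Lemma: Sect2-orderedpair-property}, so $\Delta_0$-Replacement gives $b=\{\pi_0(u):u\in b^{*}\}$, and unwinding the definition of $\pi_0$ shows $\forall x\in a(\phi(x,\vec p)\to x\in b)\land\forall x\in b(x\in a\land\phi(x,\vec p))$, the required instance of the usual $\Gamma$-Separation. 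The one genuine point to check --- and the step I expect to be the main obstacle --- is that substituting the $\Delta_0$-definable terms $\pi_0(u),\pi_1(u)$ and their iterates for free variables of a $\Gamma$-formula keeps us inside $\Gamma$. For $j$-free subformulas this is precisely the ``Furthermore'' clause of \autoref{Lemma: Sect2-orderedpair-property}; the delicate case is a bounded quantifier whose bound has become $\pi_i(u)$ or $j^{n}(\pi_i(u))$, but such a quantifier can be re-expressed as a bounded quantifier ranging over an iterated union of $u$ --- using that $\pi_i(u)$ lies in $\bigcup u$, its elements in $\bigcup\bigcup u$, and that ``$z=\pi_i(u)$'' is $\Delta_0$ --- so no unbounded quantifier is introduced and the complexity class is preserved; one then lifts this from $\Delta^j_0$ to $\Sigma^j_n$ and $\Pi^j_n$ by a routine induction on formula structure that uses no property of $j$. (Equivalently, as noted in the excerpt, $\pi_0$ and $\pi_1$ are rudimentary, hence simple, which yields the same conclusion.)
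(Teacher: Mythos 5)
Your proposal is correct and follows essentially the same route as the paper's proof: code the parameters into the set via $x\mapsto\langle x,p\rangle$ using $\Delta_0$-Replacement, apply reduced $\Gamma$-Separation to the formula with projection terms substituted, and project back. The paper simply treats the parameter tuple as a single set $p$ from the outset and asserts the preservation of $\Gamma$ under substitution of $\pi_0,\pi_1$ by a routine induction, which you elaborate but do not change.
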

\begin{proof}
    Let $a$ be a non-empty set and $\phi(x,y)$ be a $\Gamma$-formula. We want to show that for a parameter set $p$, there is a set $b$ such that
    \begin{equation*}
        \forall x[x\in b \lr x\in a\land \phi(x,p)].
    \end{equation*}
    Let $a'=\{\langle x,p\rangle\mid x\in a\}$.
    $a'$ exists since the map $x\mapsto \langle x,p\rangle$ is a $\Delta_0$-definable class function, so we can apply the $\Delta_0$-replacement to get $a'$. By the reduced $\Gamma$-Separation, the following set exists:
    \begin{equation*}
        b' = \{x\in a' \mid \phi(\pi_0(x),\pi_1(x)) \}
    \end{equation*}
    where $\pi_0$ and $\pi_1$ are projection functions for the Kuratowski ordered pair, which are $\Delta_0$-definable.
    We can check by induction on a complexity of $\phi$ that $\phi(\pi_0(x),\pi_1(x))$ is a $\Gamma$-formula if $\phi(x,y)$ is a $\Gamma$-formula, so we can apply the reduced $\Gamma$-Separation to get $b'$.
    Then $b=\pi_0^"[b'] = \{x \mid \langle x,p\rangle\in b'\}$, and we can see that $b$ satisfies $b=\{x\in a\mid\phi(x,p)\}$.
\end{proof}

The following theorem says we can replace Critical Point of $\BTEE$ to a simpler axiom under the Elementarity of $j$ and $\Delta^j_0$-Separation:
\begin{proposition}[{\cite[p337]{Corazza2000}}, $\ZFC+\Delta^j_0\text{-Separation + Elementarity}$] \pushQED{\qed}
    Critical Point follows from the \emph{Nontriviality of $j$} asserting  $\exists x [x\neq j(x)]$. \qedhere 
\end{proposition}

\section{Partial truth predicates}
\autoref{Proposition: Sect2-ConsequenceWA0} suggests that $\WA_0$ allows us to define the full truth predicate for $\Sigma_\infty$-formulas by using $V_{j^m(\kappa)}$ of parameters with small rank. This truth predicate is, however, not enough to evaluate formulas with $j$. Thus we can ask if there is a way to push this construction to get a partial truth predicate for formulas using $j$.
In this section, as an affirmative answer to the previous question, we see how to construct a truth predicate for $\Delta^j_0(\Sigma_\infty)$-formulas over $\ZFC + \WA_0$ + $\Pi^j_1$-Induction, hence also over $\ZFC+\WA_1$.

Before defining the truth predicate over $\ZFC + \WA_0$ + $\Pi^j_1$-Induction, let us define subsidiary terminologies for $j$-formulas. For a $j$-formula $\phi$, a \emph{$j$-height of $\phi$} is the number of $j$ appearing in $\phi$. For a set $a$, a \emph{degree} of $a$ is the least $n<\omega$ such that $a\in V_{j^n(\kappa)}$.
For $\vec{a}$ a finite sequence of sets, the degree of $\vec{a}$ means the maximum of the degree of components of $\vec{a}$. We denote the $j$-height of $\phi$ and the degree of $a$ as $\jht\phi$ and $\deg a$ respectively.  To clarify the meaning of the definition, let us provide their definitions in a formal manner:
\begin{definition}
    For a formula $\phi$, let us define its \emph{$j$-height} $\jht\phi$ inductively as follows:
    \begin{itemize}
        \item If $\phi(v_0,v_1,\cdots,v_n)$ is $\Sigma_\infty$, the for not necessarily distinct variables $x_0$, $x_1$, $\cdots$, $x_n$, define $\jht \phi(j^{e_0}(x_0),\cdots, j^{e_n}(x_n)) = e_0+\cdots +e_n$.
        \item $\jht (\lnot\phi)=\jht\phi$
        \item For a propositional connective $\circ\in \{\land,\lor,\to\}$, $\jht(\phi\circ\psi)=\jht\phi + \jht\psi$.
        \item If $\mathsf{Q}$ is a quantifier, define $\jht (\mathsf{Q}x \phi) = \jht \phi$ and $\jht(\mathsf{Q}x\in j^l(y)\phi) = l+\jht(\phi)$.
    \end{itemize}
    
    For a set $a$, define $\deg_1 a = \min\{n<\omega \mid \rank a < j^n(\kappa)\}$. For a finite tuple of sets $\vec{a}=\langle a_0,\cdots, a_{m-1}\rangle$, $\deg\vec{a}$ is the maximum of $\deg_1(a_i)$ for $i<m$.
\end{definition}
The definition of $\jht$ is coherent for quantifiers in the sense that
\begin{equation*}
    \jht(\forall x( x\in j^l(y)\to \phi)) =
    \jht(\exists  x( x\in j^l(y)\land  \phi)) = l + \jht(\phi).
\end{equation*}
$\jht$ is recursively defined and takes a natural number (coding a formula). Thus $\jht$ is a recursive function. 
The definition of degree function requires some justification:
\begin{lemma}[$\ZFC+\WA_0$ + $\Pi^j_1$-Induction] \label{Lemma: degree is Delta j 1 definable}
    $\deg_1$ and $\deg$ are $\Delta^j_1$-definable total class functions with parameter $V_\kappa$.
\end{lemma}
\begin{proof}
    We can see that $\deg_1$ satisfies
    \begin{align*}
        \deg_1(x)=n & \iff \forall f \big[\prejiter(f,n,V_\kappa) \to \big( a\in f(n) \land \forall m<n [a \notin f(m)]\big)\big] \\
        & \iff  \exists f \big[\prejiter(f,n,V_\kappa) \land \big( a \in f(n) \land \forall m<n [a \notin f(m)]\big)\big]
    \end{align*}
    $\prejiter(f,n,\kappa)$ is $\Sigma^j_1$, so $\deg_1(x)$ is $\Delta^j_1$-definable with parameter $V_\kappa$.
    To formulate the definition of $\deg$, we appeal to \autoref{Corollary: Sect2-optimized-recursion} and define $\deg \vec{a}$ by recursion on the length of $\vec{a}$; More precisely, consider the following recursively defined $\Delta^j_1$-function $G(n,\vec{a})$ with parameter $\vec{a}$:
    \begin{equation*}
        G(0,\vec{a})=0,\ G(n+1,\vec{a}) = \max\{G(n,\vec{a}),\vec{a}(n)\}.
    \end{equation*}
    (We take $\vec{a}(i)=0$ if $i$ is greater than or equal to the length of $\vec{a}_i$.) Then $G(n,\vec{a})$ is $\Delta^j_1$-definable, and $\deg(\vec{a}) = G(\dom (\vec{a}),\vec{a})$.
\end{proof}

The reader may ask why we defined the $j$-height of a formula. \autoref{Proposition: Sect2-Reflection0} ensures how much `additional height' is required to catch the validity of a $\Delta^j_0(\Sigma_\infty)$-formula $\phi$. Before providing its formal statement, let us examine its brief motivation.

If $\phi$ is $\Sigma_\infty$, then its truth is captured by any $V_{j^n(\kappa)}$ for any $n$ such that parameters of $\phi$ is in $V_{j^n(\kappa)}$. Especially, if $\phi(\vec{a})$ holds and $m\ge\deg(\vec{a})$, then $V_{j^m(\kappa)}\vDash \phi(\vec{a})$ also holds.
We want to have a similar situation for $\Delta_0^j(\Sigma_\infty)$-formulas, but the situation is more cumbersome when $\phi$ is a proper $\Delta_0^j(\Sigma_\infty)$-formula. For example, for a $\Sigma_\infty$-formula $\psi(x)$, consider the $\Delta_0^j(\Sigma_\infty)$-formula $\phi(a)\equiv \exists x\in a \psi(j(x))$.
Even when $a\in V_{j^n(\kappa)}$, there is no guarantee that $j(x)\in V_{j^n(\kappa)}$, so even stating $V_{j^n(\kappa)}\vDash \phi(a)$ may not make sense.
However, if $a\in V_{j^n(\kappa)}$ and $x\in a$, then $j(x)\in V_{j^{n+1}(\kappa)}$. Thus $V_{j^{n+1}(\kappa)}$ is `enough' to catch the formula $\phi(a)$, and the additional $+1$ is equal to $\jht(\phi)$. 

\begin{proposition}[$\ZFC+\WA_0$ + $\Pi^j_1$-Induction]\label{Proposition: Sect2-Reflection0}
    Let $\phi(\vec{x})$ be a $\Delta^j_0(\Sigma_\infty)$ formula. If $\vec{a}$ is a  finite sequence of sets, then for each natural number $m\ge \deg \vec{a} + \jht\phi$, we have
    \begin{equation*}
        \phi(\vec{a})\iff (V_{j^m(\kappa)},\in, \hat{\jmath}_m)\vDash\phi(\vec{a}).
    \end{equation*}
    Here $\hat{\jmath}_m$ is a function of domain $V_{j^m(\kappa)}$ given by
    \begin{equation*}
        \hat{\jmath}_m(x) = 
        \begin{cases}
            j(x) & j(x)\in V_{j^m(\kappa)}, \\
            0 & \text{otherwise.}
        \end{cases}
    \end{equation*}
\end{proposition}
\begin{proof}
    Let us prove it by induction on $\phi$:
    \begin{enumerate}
        \item Suppose that $\phi$ takes the form $\psi(j^{e_0}(x_0),\cdots,j^{e_n}(x_n))$ for some $\Sigma_\infty$-formula $\psi$ and not necessarily pairwise distinct variables $x_0$, $\cdots$, $x_n$.
        If $\vec{a}=\langle a_0,\cdots, a_{n-1}\rangle$, we can see that $m\ge \deg \vec{a} + \jht\phi$ and $j^{e_i}(a_i)\in V_{j^m(\kappa)}$ for $i=0,1,\cdots, n$. Thus $j^{e_i}(a_i) = \hat{\jmath}_m^{e_i}(a_i)$ for each $i$, and so the desired result follows from \autoref{Proposition: Sect2-ConsequenceWA0}.

        \item Now assume that the proposition holds for $\phi$ and $\psi$. Then it is easy to see that for $m\ge \deg \vec{a}+ \jht\phi + \jht\psi$, $\phi\circ\psi(\vec{a})$ if and only if the same holds over $(V_{j^m(\vec{a})},\in, \hat{\jmath}_m)$. The case for the negation is easy to see.

        \item Now let $\phi$ is of the form $\exists x\in j^e(y) \psi(x,y,\vec{z})$. We have
        \begin{equation*}
            \jht(\exists x\in j^e(y) \psi(x,y,\vec{z}))=e + \jht \psi(x,y,\vec{z}).
        \end{equation*}
        
        Now suppose that $m\ge \deg{\vec{a}}+ e + \jht\psi$, and let $\vec{a}_y$ be the component of $\vec{a}$ corresponding to the variable $y$.
        Observe that $\hat{\jmath}_m^e(\vec{a}_y) = j^e(\vec{a}_y)$ since $e+\deg(\vec{a})\le \jht(\phi)+\deg(\vec{a})\le m$.
        
        For one direction, suppose that there is $b\in j^e(\vec{a}_y)$ such that $\psi(b,\vec{a}_y,\vec{z})$.
        Then $\deg b \le e + \deg\vec{a}$, so 
        \begin{equation*}
            \deg(\vec{a}[b/x]) + \jht(\psi) \le \deg\vec{a}+e+\jht\psi\le m.
        \end{equation*}        
        Hence we can apply the inductive hypothesis, and we have 
        \begin{equation*} 
            \psi(b,\vec{a}_y,\vec{a}_{\vec{z}}) \iff \left(V_{j^m(\kappa)},\in, \hat{\jmath}_m\right)\vDash \psi(b,\vec{a}_y,\vec{a}_{\vec{z}}).
        \end{equation*}
        Hence we have $\left(V_{j^m(\kappa)},\in, \hat{\jmath}_m\right)\vDash \exists x\in j^e(\vec{a}_y) \psi(x,\vec{a}_y,\vec{a}_{\vec{z}})$.
        For the remaining direction, suppose that we have 
        \begin{equation*}
            \left(V_{j^m(\kappa)},\in,\hat{\jmath}_m\right)\vDash \exists x\in j^e(\vec{a}_y) \psi(x,\vec{a}_y,\vec{a}_{\vec{z}}).
        \end{equation*}
        Then we have $b\in \hat{\jmath}_m^e(\vec{a}_y)$ such that $\left(V_{j^m(\kappa)},\in, \hat{\jmath}_m\right)\vDash \psi(b,\vec{a}_y,\vec{a}_{\vec{z}})$.
        Now we can apply the inductive hypothesis for $\psi$ since 
        \begin{equation*}
            \deg(b,\vec{a}_y,\vec{a}_{\vec{z}})+\jht(\psi) \le \max(e,\deg(\vec{a}))+\jht(\psi) \le m,
        \end{equation*}
        and so we have $\psi(b,\vec{a}_y,\vec{a}_{\vec{z}})$ as desired.
        
        \item Lastly, the case when $\phi$ is of the form $\forall x\in j^e(y) \psi(x,y,\vec{z})$ is similar to the existential case, so we left it to the reader.
        \qedhere 
    \end{enumerate}
\end{proof}

Note that the above proposition is actually a proposition schema, so we formally have instances of \autoref{Proposition: Sect2-Reflection0} for each $\Delta^j_0(\Sigma_\infty)$ formula $\phi$, and induction on $\phi$ in the proof applies over the metatheory
We compute $\jht\phi$ as the $j$-height of a coded formula corresponding to $\phi$, and the resulting $j$-height is a standard natural number. However, $m$ and $\deg\vec{a}$ may not be standard.

Now we want to define a partial truth predicate $\vDash_{\Delta^j_0(\Sigma_\infty)}\phi[\vec{a}]$ for a (code for) $\Delta^j_0(\Sigma_\infty)$-formula $\phi$ and a sequence of sets $\vec{a}$ as $(V_{j^m(\kappa)},\in, \hat{\jmath}_m)\vDash \phi(\vec{a})$ for an appropriate $m\ge \deg\vec{a} + \jht(\phi)$.
We prove that the choice of $m$ is irrelevant as long as $m\ge \deg\vec{a} + \jht(\phi)$ to ensure the definition works:
\begin{lemma}[$\ZFC + \WA_0$ + $\Pi^j_1$-Induction] \label{Lemma: Sect3-Eventual Delta j 0 Sigma infty truth coherence}
    Suppose that $\phi$ is a (code for) $\Delta^j_0(\Sigma_\infty)$-formula and $\vec{a}$ is a finite sequence of sets.
    Then
    \begin{equation} \label{Formula: Equivalence of Delta j 0 Sigma infty truth}
        \forall m,m'<\omega \forall \vec{a} \in V^{<\omega}
        \left[ m,m' \ge \deg\vec{a} + \jht(\phi)\implies  \left((V_{j^m(\kappa)},\in, \hat{\jmath}_m)\vDash\phi(\vec{a}) \iff (V_{j^{m'}(\kappa)},\in, \hat{\jmath}_{m'})\vDash\phi(\vec{a})\right)\right].
    \end{equation}
\end{lemma}
\begin{proof}
    Let us fix a recursive enumeration of formulas in the language of $\{\in,j\}$ such that every subformula of a given formula occurs earlier in the enumeration than the formula. 
    We first claim that \eqref{Formula: Equivalence of Delta j 0 Sigma infty truth} is equivalent to a $\Pi^j_1$-formula with a natural number free variable (a code for) $\phi$ and parameter $V_\kappa$, so we can apply $\Pi^j_1$-induction.
    Let
    \begin{multline*}
        \Xi(v,f,m) \equiv \jiter(m,V_\kappa,v)\land \text{``$f$ is a function''}\land \dom f = v \\ \land \forall x\in v \big[\big(j(x)\in v\to f(x)=j(x)\big) \land (j(x)\notin v\to f(x)=0)\big]
    \end{multline*}
    Then $\Xi(v,f,m)$ is $\Sigma^j_1$-formula with parameter $V_\kappa$. We can also see $\forall m<\omega\exists! v,f\ \Xi(v,f,m)$ from \autoref{Proposition: Sect2-ConsequencesSigmaj0Ind}. (We need $\Delta^j_0$-Separation to show the existence of $\hat{\jmath}_m$.)
    Moreover, $\Xi(v,f,m)$ implies $v=V_{j^m(\kappa)}$ and $f=\hat{\jmath}_m$.
    Hence $(V_{j^m(\kappa)},\in, \hat{\jmath}_m)\vDash\phi(\vec{a})$ has the following two formulations:
    \begin{enumerate}
        \item $\forall v\forall f [\Xi(v,f,m)\to (v,\in,f)\vDash \phi(\vec{a})]$, and 
        \item $\exists v\forall f [\Xi(v,f,m)\land  (v,\in,f)\vDash \phi(\vec{a})]$.
    \end{enumerate}
    Hence $(V_{j^m(\kappa)},\in, \hat{\jmath}_m)\vDash\phi(\vec{a})$ is $\Delta^j_1$ with parameter $V_\kappa,m,\phi,\vec{a}$.
    The expression $\vec{a}\in V^{<\omega}$ means $\vec{a}$ is a finite function of domain $n\in\omega$, which is $\Delta_1$-expressible. $\jht$ is arithmetically definable and $\deg$ is $\Delta^j_1$-definable by \autoref{Lemma: degree is Delta j 1 definable}.
    Hence the entire formula \eqref{Formula: Equivalence of Delta j 0 Sigma infty truth} is $\Pi^j_1$ with parameter $V_\kappa$.

    Now let us prove \eqref{Formula: Equivalence of Delta j 0 Sigma infty truth} by $\Pi^j_1$-induction applied to a (code for a) formula $\phi$. 
    \begin{enumerate}
        \item $\phi$ is atomic: We may assume $\phi(v_0,v_1)$ is either $j^{e_0}(v_0)=j^{e_1}(v_1)$ or $j^{e_0}(v_0)\in j^{e_1}(v_1)$. In either case, the interpretation of $j$ (as a symbol) makes the proof non-trivial.
        For $\vec{a}=\langle a_0,a_1,\cdots\rangle$, it suffices to show that $\hat{\jmath}^{e_i}_m(a_i) = j^{e_i}(a_i)=\hat{\jmath}^{e_i}_{m'}(a_i)$ for $i=0,1$, which follows from
        \begin{equation*}
            \deg_1(a_i) + e_i \le \deg(\vec{a}) + \jht(\phi) \le m,m'.
        \end{equation*}

        \item The cases for connectives are easy to prove.

        \item Now suppose that $\phi$ is $\exists x\in j^e(y) \psi(x,y,\vec{z})$. By mimicking the argument in the proof of \autoref{Proposition: Sect2-Reflection0}, we have that for every $m\ge\deg(\vec{a})+\jht(\phi)$,
        \begin{equation*}
            (V_{j^m(\kappa)},\in, \hat{\jmath}_m)\vDash\phi(\vec{a}) \iff \exists b\in j^e(\vec{a}_y) (V_{j^m(\kappa)},\in, \hat{\jmath}_m)\vDash\psi(\vec{a}[b/y]).
        \end{equation*}
        By the inductive hypothesis, we have for every $b$ and $m,m'\ge \deg(\vec{a}[b/y])+\jht(\phi)$,
        \begin{equation*}
            (V_{j^m(\kappa)},\in, \hat{\jmath}_m)\vDash\psi(\vec{a}[b/y]) \iff (V_{j^{m'}(\kappa)},\in, \hat{\jmath}_{m'})\vDash\psi(\vec{a}[b/y]).
        \end{equation*}
        Hence the desired equivalence follows.

        \item The case for bounded universal quantification is similar, so we leave it to the reader. \qedhere 
    \end{enumerate}
\end{proof}

\autoref{Proposition: Sect2-Reflection0} and \autoref{Lemma: Sect3-Eventual Delta j 0 Sigma infty truth coherence} enable us to define the truth predicate for $\Delta^j_0(\Sigma_\infty)$-formulas:
\begin{definition}
    Define a partial truth predicate $\vDash_{\Delta^j_0(\Sigma_\infty)}$ for $\Delta^j_0(\Sigma_\infty)$-formulas as follows: 
    Let $\phi$ be a (code for) $\Delta^j_0(\Sigma_\infty)$-formula. Then we say $\vDash_{\Delta^j_0(\Sigma_\infty)}\phi(\vec{a})$ if and only if for every $m\ge \deg\vec{a}+\jht \phi$, $(V_{j^m(\kappa)},\in, \hat{\jmath}_m)\vDash \phi(\vec{a})$.
\end{definition}

We can see that $\vDash_{\Delta^j_0(\Sigma_\infty)}$ satisfies Tarski's definition of truth. In particular, we have
\begin{lemma}[$\ZFC + \WA_0$ + $\Pi^j_1$-Induction] \pushQED{\qed}
    For two $\Delta^j_0(\Sigma_\infty)$-formulas $\phi$ and $\psi$, we have
    \begin{enumerate}
        \item $\vDash_{\Delta^j_0(\Sigma_\infty)} \lnot \phi(\vec{a}) \iff \nvDash_{\Delta^j_0(\Sigma_\infty)}\phi(\vec{a})$,
        \item $\vDash_{\Delta^j_0(\Sigma_\infty)} \phi\circ \psi(\vec{a}) \iff \big[ \vDash_{\Delta^j_0(\Sigma_\infty)} \phi(\vec{a})\big] \circ \big[ \vDash_{\Delta^j_0(\Sigma_\infty)} \phi(\vec{a})\big]$ for a binary connective $\circ$. \qedhere 
    \end{enumerate}
\end{lemma}

Moreover, $\vDash_{\Delta^j_0(\Sigma_\infty)}$ calculates $j$ correctly in the following sense:
\begin{lemma}[$\ZFC + \WA_0$ + $\Pi^j_1$-Induction]
    Let $\phi$ be a $\Delta^j_0(\Sigma_\infty)$-formula.
    Then 
    \begin{equation*}
        \vDash_{\Delta^j_0(\Sigma_\infty)}(\phi(j^{e_0}(v_0),\cdots, j^{e_{k-1}}(v_{k-1})))[\langle a_0,\cdots, a_{k-1}\rangle] \iff \vDash_{\Delta^j_0(\Sigma_\infty)}\phi[\langle j^{e_0}(a_0),\cdots,j^{e_{k-1}}(a_{k-1})\rangle]
    \end{equation*}
    for every $a_0,\cdots, a_{k-1}$.
\end{lemma}
\begin{proof}
    Let $\vec{a}=\langle a_0,\cdots,a_{k-1}\rangle$.
    First, 
    \begin{equation} \label{Formula: Sec3-Models-phi-j-00}
        \vDash_{\Delta^j_0(\Sigma_\infty)}(\phi(j^{e_0}(v_0),\cdots, j^{e_{k-1}}(v_{k-1})))[\langle a_0,\cdots, a_{k-1}\rangle]
    \end{equation}
    if and only if for every $m\ge \deg(\vec{a}) + \jht(\phi)+e_0+\cdots+e_{k-1}$, $\left( V_{j^m(\kappa)},\in,\hat{\jmath}_m\right)\vDash \phi(\hat{\jmath}_m^{e_0}(a_0),\cdots,\hat{\jmath}_m^{e_{k-1}}(a_{k-1}))$.
    Since $m\ge \deg_1(a_i)+e_i$ for each $i<k$, we have that $j^{e_i}(a_i) \in V_{\kappa_m}$, so $\hat{\jmath}_m^{e_i}(a_i) = j^{e_i}(a_i)$ for each $i<k$.
    Hence \eqref{Formula: Sec3-Models-phi-j-00} if and only if for every $m\ge \deg(\vec{a}) + \jht(\phi)+e_0+\cdots+e_{k-1}$, $V_{j^m(\kappa)}\vDash \phi(j^{e_0}(a_0),\cdots, j^{e_{k-1}}(a_{k-1}))$.
    Similarly, we can see that 
    \begin{equation*}
        \vDash_{\Delta^j_0(\Sigma_\infty)}\phi[\langle j^{e_0}(a_0),\cdots,j^{e_{k-1}}(a_{k-1})\rangle]
    \end{equation*}
    if and only if for every $m\ge \jht(\phi)+\max\{e_i+\deg_1(a_i)\mid i<k\}$, $V_{j^m(\kappa)}\vDash \phi(j^{e_0}(a_0),\cdots, j^{e_{k-1}}(a_{k-1}))$.
    So in both cases, each formula holds when for a sufficiently large $m$, $V_{j^m(\kappa)}\vDash \phi(j^{e_0}(a_0),\cdots, j^{e_{k-1}}(a_{k-1}))$. Hence the equivalence follows by \autoref{Lemma: Sect3-Eventual Delta j 0 Sigma infty truth coherence}.
\end{proof}

Let us analyze the definitional complexity of $\vDash_{\Delta^j_0(\Sigma_\infty)}$. For a given (code for) $\Delta^j_0(\Sigma_\infty)$-formula $\phi$ and a finite sequence of sets $\vec{a}$, 
\begin{equation*}
    \vDash_{\Delta^j_0(\Sigma_\infty)} \phi(\vec{a})\iff \forall m<\omega \big[m\ge(\deg \vec{a} +\jht\phi) \to (V_{j^m(\kappa)},\in, \hat{\jmath}_m)\vDash \phi(\vec{a})\big].
\end{equation*}

We can see that the inequality $m\ge \deg\vec{a}+\jht \phi$ is $\Delta^j_1$ since $\deg$ is $\Delta^j_1$-definable.
Also, $(V_{j^m(\kappa)},\in, \hat{\jmath}_m)\vDash \phi(\vec{a})$ is $\Delta^j_1$ as we proved in the proof of \autoref{Lemma: Sect3-Eventual Delta j 0 Sigma infty truth coherence}.
Hence $\vDash_{\Delta^j_0(\Sigma_\infty)}$ is $\Pi^j_1$-definable. Furthermore, we also have
\begin{equation}\label{Formula: Sect3-Sigmaj1-formula-Pred0}
    \vDash_{\Delta^j_0(\Sigma_\infty)} \phi(\vec{a})\iff \exists m<\omega \big[m\ge(\deg \vec{a} +\jht\phi) \land (V_{j^m(\kappa)},\in, \hat{\jmath}_m)\vDash \phi(\vec{a})\big]
\end{equation}
Hence $\vDash_{\Delta^j_0(\Sigma_\infty)}$ is $\Delta^j_1(\Sigma_\infty)$-definable provably in $\ZFC + \WA_0$ + $\Pi^j_1$-Induction.

We can also define a partial truth predicate for $\Sigma^j_n(\Sigma_\infty)$-formulas whose definitional complexity is $\Sigma^j_n$.
\begin{definition}
    Let us define $\vDash_{\Sigma^j_n(\Sigma_\infty)}$ inductively as follows: if $\phi(x,y)$ is a $\Pi^j_{n-1}(\Sigma_\infty)$-formula, then define $\vDash_{\Sigma^j_n(\Sigma_\infty)} \forall x \phi(x,p)$ if and only if there is $a$ such that $\nvDash_{\Sigma^j_{n-1}(\Sigma_\infty)}\lnot\phi(a,p)$. 
\end{definition}

The above truth predicate is clearly $\Sigma^j_n$-definable.
We can also define $\vDash_{\Pi^j_n(\Sigma_\infty)}\phi(x)$ when $\phi$ is $\Pi^j_n(\Sigma_\infty)$ by negating $\vDash_{\Sigma^j_n(\Sigma_\infty)}$, and $\vDash_{\Pi^j_n(\Sigma_\infty)}\phi(x)$ is $\Pi^j_n$.
By using $\vDash_{\Sigma^j_n(\Sigma_\infty)}$, we can cast $\Sigma^j_n$-Separation as a single sentence when $n\ge 1$.
Thus if $n\ge 1$, then $\Sigma^j_n$-Separation is finitely axiomatizable.
Combining with the fact in \cite{Hamkins2001WA} that $\ZFC+\WA_0$ is finitely axiomatizable, we have the following:
\begin{corollary}\pushQED{\qed} \label{Corollary: Sect4-WAn finite}
    For each natural number $n$, $\ZFC+\WA_n$ is finitely axiomatizable. \qedhere
\end{corollary}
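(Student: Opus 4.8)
The plan is to reduce the general case to the case $n=0$, which is already available: by Hamkins \cite{Hamkins2001WA}, $\ZFC+\WA_0$ is finitely axiomatizable, so I fix once and for all a finite set $\Phi_0$ of $\{\in,j\}$-sentences with the same deductive closure as $\ZFC+\WA_0$. Now fix $n\ge 1$. Since every $\Delta^j_0$-formula is equivalent over predicate logic to a $\Sigma^j_1$-formula, $\Sigma^j_n$-Separation contains $\Sigma^j_0$-Separation, hence $\ZFC+\WA_n\vdash\ZFC+\WA_0$; so $\ZFC+\WA_n$ proves every member of $\Phi_0$, while conversely $\Phi_0$ proves $\ZFC+\BTEE$. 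Thus $\ZFC+\WA_n$ and $\Phi_0\cup(\Sigma^j_n\text{-Separation})$ have the same theorems, and it suffices to replace the \emph{schema} $\Sigma^j_n$-Separation, over the theory $\Phi_0$, by a single sentence.

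The single sentence will be built from the partial truth predicate $\models_{\Sigma^j_n(\Sigma_\infty)}$, using two observations. First, every $\Sigma^j_n$-formula is literally a $\Sigma^j_n(\Sigma_\infty)$-formula: an atomic $j$-formula $j^a(x)\star j^b(y)$ (with $\star$ either $=$ or $\in$) is the substitution instance $\psi(j^a(x),j^b(y))$ of the $\Sigma_\infty$ atomic formula $\psi(u,v)\equiv u\star v$, whence $\Delta^j_0\subseteq\Delta^j_0(\Sigma_\infty)$ and $\Sigma^j_n\subseteq\Sigma^j_n(\Sigma_\infty)$; in particular $\models_{\Sigma^j_n(\Sigma_\infty)}$, which is uniformly $\Sigma^j_n$-definable in the code of its input formula, applies to every $\Sigma^j_n$-formula. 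Second, $\ZFC+\WA_0$ proves the correctness schema: for each $\Sigma^j_n(\Sigma_\infty)$-formula $\phi$, $\ZFC+\WA_0\vdash\,{\models_{\Sigma^j_n(\Sigma_\infty)}}\phi(\vec a)\lr\phi(\vec a)$. At the base level this is \autoref{Proposition: Sect2-Reflection0}, whose proof needs only that the critical sequence $\langle j^m(\kappa)\mid m<\omega\rangle$ is total; and $\ZFC+\WA_0$ proves totality by combining \autoref{Proposition: Sect2-ConsequenceWA0}(1) (some $j^m(\kappa)$ exceeds any prescribed ordinal, so cofinally many $j^m(\kappa)$ exist) with \autoref{Proposition: Sect2-ConsequencesSigmaj0Ind}(1) (if $j^{m'}(\kappa)$ exists and $m\le m'$ then $j^m(\kappa)$ exists), using that $\Sigma^j_0$-Separation yields $\Sigma^j_0$-Induction. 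The levels $n\ge 1$ then follow by a routine meta-induction on $n$ from the recursive clauses defining $\models_{\Sigma^j_n(\Sigma_\infty)}$ (and $\models_{\Pi^j_n(\Sigma_\infty)}$) in terms of the level below.

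By \autoref{Lemma: SimplifySeparation} it is enough to axiomatize the reduced form of $\Sigma^j_n$-Separation, so I take $\theta_n$ to be the sentence
\[
\forall\ulcorner\phi\urcorner\,\forall a\,\exists b\,\bigl[\,\forall x\in a\,(\,{\models_{\Sigma^j_n(\Sigma_\infty)}}\phi(x)\to x\in b\,)\ \wedge\ \forall x\in b\,(\,x\in a\ \wedge\ {\models_{\Sigma^j_n(\Sigma_\infty)}}\phi(x)\,)\,\bigr],
\]
where $\ulcorner\phi\urcorner$ ranges over the codes of $\Sigma^j_n$-formulas with the single free variable $x$. On one hand $\ZFC+\WA_n\vdash\theta_n$: for a fixed code $\ulcorner\phi\urcorner$ the relation $\models_{\Sigma^j_n(\Sigma_\infty)}\phi(x)$ is $\Sigma^j_n$ in $x$ with $\ulcorner\phi\urcorner$ as a parameter, so the required $b$ is produced by a single instance of $\Sigma^j_n$-Separation. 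On the other hand $\Phi_0\cup\{\theta_n\}$ proves every instance of $\Sigma^j_n$-Separation: instantiate $\theta_n$ at a fixed standard code, and use the correctness schema (available since $\Phi_0\vdash\ZFC+\WA_0$) to identify the $b$ it furnishes with $\{x\in a:\phi(x)\}$. Hence $\Phi_0\cup\{\theta_n\}$ is a finite set of sentences with the same theorems as $\ZFC+\WA_n$, so $\ZFC+\WA_n$ is finitely axiomatizable. The one place demanding real care is the correctness schema for $\models_{\Sigma^j_n(\Sigma_\infty)}$ over the comparatively weak $\ZFC+\WA_0$ — concretely, checking that \autoref{Proposition: Sect2-Reflection0} and, upstream, the very definition of $\models_{\Delta^j_0(\Sigma_\infty)}$ survive with only $\Sigma^j_0$-Induction available, which is exactly why the totality of the critical sequence must be routed through \autoref{Proposition: Sect2-ConsequenceWA0} and \autoref{Proposition: Sect2-ConsequencesSigmaj0Ind} and not through \autoref{Proposition: Sect2-ConsequencesSigmaj1Ind}; all the remaining steps are routine bookkeeping with codes.
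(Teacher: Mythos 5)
Your overall strategy is the same as the paper's: combine Hamkins's finite axiomatization of $\ZFC+\WA_0$ with a single sentence expressing $\Sigma^j_n$-Separation via the partial truth predicate $\models_{\Sigma^j_n(\Sigma_\infty)}$. The paper's own proof is essentially a one-liner, and most of the bookkeeping you supply (the inclusion $\Sigma^j_n\subseteq\Sigma^j_n(\Sigma_\infty)$, the shape of $\theta_n$, the two directions of the equivalence over the base theory) is fine. But the step you yourself single out as the delicate one --- the correctness schema for the truth predicate over the base theory $\Phi_0$ --- is where there is a genuine gap. You claim that $\ZFC+\WA_0$ proves totality of the critical sequence by combining \autoref{Proposition: Sect2-ConsequenceWA0}(1) with \autoref{Proposition: Sect2-ConsequencesSigmaj0Ind}(1). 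What those two facts (plus monotonicity) actually give is that the class $S=\{n<\omega:\exists y\,\critseq(n,y)\}$ contains $0$, is downward closed, and is closed under successor: given $j^n(\kappa)$, apply \autoref{Proposition: Sect2-ConsequenceWA0}(1) to $j^n(\kappa)+1$ to get some $n'>n$ in $S$, whence $n+1\in S$. Concluding $S=\omega$ from this is precisely an instance of $\Sigma^j_1$-Induction for the formula $\exists y\,\critseq(n,y)$, which $\WA_0$ does not supply --- the paper deliberately files totality under \autoref{Proposition: Sect2-ConsequencesSigmaj1Ind} (i.e., under $\Sigma^j_1$-Induction) and keeps every $\WA_0$-level statement conditional on the existence of $j^n(\kappa)$. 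Without totality, the definition of $\models_{\Delta^j_0(\Sigma_\infty)}$ (which evaluates formulas inside $V_{j^m(\kappa)}$) and hence the correctness schema are not available over $\Phi_0$, and then $\Phi_0\cup\{\theta_n\}$ is not known to recover the Separation schema.

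The gap is localized and easily repaired: add to your finite list the single sentence $\forall n\in\omega\,\forall x\,\exists y\,\jiter(n,x,y)$ (or, equivalently, the one instance of $\Sigma^j_1$-Induction needed for \autoref{Proposition: Sect2-ConsequencesSigmaj1Ind}). This is a single sentence, it is provable in $\ZFC+\WA_n$ for $n\ge 1$ because $\Sigma^j_n$-Separation yields $\Sigma^j_1$-Induction, and over $\Phi_0$ together with this sentence the correctness schema for $\models_{\Sigma^j_n(\Sigma_\infty)}$ does go through by external induction on the standard code exactly as you outline. With that one extra axiom your argument is complete and agrees with the paper's intended proof.
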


Before finishing this section, let us expand $\Pi^j_n(\Sigma_\infty)$-truth predicate to combinations of $\Pi^j_n(\Sigma_\infty)$ by conjunction, disjunction, and universal quantification:
\begin{definition}
    \emph{A class of $\underline{\Pi}^j_n(\Sigma_\infty)$-formulas} is the least class of formulas containing $\Pi^j_n(\Sigma_\infty)$ closed under $\land$, $\lor$, and (unbounded) $\forall$.
    We also define  class of $\underline{\Sigma}^j_n(\Sigma_\infty)$-formulas dually, as the least class of formulas containing $\Sigma^j_n(\Sigma_\infty)$ closed under $\land$, $\lor$, and (unbounded) $\exists$.
\end{definition}

By the proof of \autoref{Corollary: Sect2-Effective-normalization}, for a $\underline{\Pi}^j_n(\Sigma_\infty)$ formula $\phi$, we can find a $\Pi^j_n(\Sigma_\infty)$ formula $D_\phi$ with precisely the same free variables with $\phi$ such that $\mathsf{Z}^-_0$ proves $\forall \vec{x}[\phi(\vec{x})\lr D_\phi(\vec{x})]$.
Then let us define $\vDash_{\underline{\Pi}^j_n(\Sigma_\infty)}\phi(\vec{a})$ by $\vDash_{\Pi^j_n(\Sigma_\infty)}D_\phi(\vec{a})$.
We can make $D_\phi$ is identical with $\phi$ when $\phi$ is $\Pi^j_n(\Sigma_\infty)$, so $\vDash_{\underline{\Pi}^j_n(\Sigma_\infty)}\phi(\vec{a})$ iff $\vDash_{\Pi^j_n(\Sigma_\infty)}D_\phi(\vec{a})$ when $\phi$ is $\Pi^j_n(\Sigma_\infty)$.
We can also see that $\vDash_{\underline{\Pi}^j_n(\Sigma_\infty)}$ satisfies Tarskian definition of truth:
\begin{lemma}[$\ZFC+\WA_0$ + $\Pi^j_1$-Induction] \pushQED{\qed}
    For each (standard) $n$ and two (codes for) $\underline{\Pi}^j_n(\Sigma_\infty)$-formulas $\phi$ and $\psi$, we have the following: For every $\vec{a}$,
    \begin{enumerate}
        \item $\vDash_{\underline{\Pi}^j_n(\Sigma_\infty)} (\phi\circ\psi) (\vec{a})$ iff $\big[ \vDash_{\underline{\Pi}^j_n(\Sigma_\infty)} \phi(\vec{a})\big]\circ \big[ \vDash_{\underline{\Pi}^j_n(\Sigma_\infty)} \psi(\vec{a})\big]$ if $\circ \in \{\land,\lor\}$.
        \item $\vDash_{\underline{\Pi}^j_n(\Sigma_\infty)}\forall y \phi(\vec{a})$ iff for every $b$, $\vDash_{\underline{\Pi}^j_n(\Sigma_\infty)}\phi(\vec{a}[b/y])$. \qedhere 
    \end{enumerate}
\end{lemma}

Moreover, we can distribute finitely many disjunctions of $\Pi^j_n(\Sigma_\infty)$-formulas:
\begin{proposition}[$\ZFC + \WA_0$ + $\Pi^j_{n+1}$-Induction]
    Let $\langle\phi_i \mid i<k\rangle$ be a finite enumeration of $\underline{\Pi}^j_n(\Sigma_\infty)$-formulas.
    Then we have the following:
    \begin{equation} \label{Formula: Sect3-Finite Disjunction Distribution}
        \forall \vec{a} \big[ \vDash_{\underline{\Pi}^j_n(\Sigma_\infty)} (\phi_0\lor\cdots\lor \phi_{k-1})(\vec{a}) \iff \exists i<k \big( 
        \vDash_{\underline{\Pi}^j_n(\Sigma_\infty)} \phi_i(\vec{a})\big)\big].
    \end{equation}
\end{proposition}
\begin{proof}
    The proof follows by applying induction on $k\in\omega$ to \eqref{Formula: Sect3-Finite Disjunction Distribution}, which is straightforward. Here we can apply $\Pi^1_{n+1}$-Induction since $\vDash_{\Pi^j_n(\Sigma_\infty)}$ (both for a single $\Pi^j_n(\Sigma_\infty)$-formula or combinations of them) is $\Pi^j_n$-definable, and bounded existential quantifier bounded by a natural number $k$ does not increase the complexity of a $\Pi^j_n$-formula by \autoref{Corollaty: Sect2-BddQreduction}, so \eqref{Formula: Sect3-Finite Disjunction Distribution} is equivalent to a $\Pi^j_{n+1}$-formula.
\end{proof}


\section{Separating Wholeness}
This section proves the consistency implications we announced in the introduction. Unlike most set-theoretic arguments about large cardinals, we will heavily rely on proof-theoretic means.

The author's initial argument constructs a \emph{weak model} to apply the Strong Soundness theorem \cite[Theorem II.8.10]{Simpson2009}. However, the reviewer suggested a more direct proof akin to the proof of completeness theorem and cut-elimination theorem of first-order logic.
All proofs in this section follow the reviewer's suggestion.

\subsection{A proof of $\Con(\ZFC+\WA_0)$}
In this subsection, we prove the consistency of $\ZFC+\WA_0$ from $\ZFC+\WA_0$ + $\Pi^j_1$-Induction, which also implies $\ZFC+\WA_1\vdash \Con(\ZFC+\WA_0)$.

Before starting the proof for the main result, let us review the proof-theoretic tools we will use. First, let us fix the language for the theories we will formulate.
\begin{definition}
    The language we consider is $\{\in,=,\sfj,\dot{\kappa}\}$, where $\dot{\kappa}$ is a constant symbol for a set moved by $j$. (We intend $\dot{\kappa}$ for the critical point of $j$, but we can interpret $\dot{\kappa}$ by any set moved by $j$ in principle.)
    We distinguish bounded quantifiers and unbounded quantifiers as formally distinct objects.
    That is, $\mathsf{Q} x\in y \phi$ is a legitimate formula \emph{only when} $\phi$ is $\Delta^j_0(\Sigma_\infty)$.
\end{definition}

There are different ways to formulate first-order logic in proof theory, and we will use Tait-styled one-sided sequent calculus. Hence there is no negation connective, but we include \emph{literals} (atomic formulas and its negations) as a basic notion. Then we define the negation as follows:
\begin{itemize}
    \item $\lnot(s=t) \equiv (s\neq t)$ and $\lnot(s\neq t) \equiv (s=t)$, and similarly for $\in$ and $\notin$.
    \item $\lnot(\phi\land \psi) \equiv \lnot \phi\lor \lnot\psi$ and $\lnot(\phi\lor \psi) \equiv \lnot \phi\land \lnot\psi$.
    \item $\lnot(\forall x \phi(x))\equiv \exists x \lnot\phi(x)$ and $\lnot(\exists x \phi(x))\equiv \forall x \lnot\phi(x)$, and similarly for bounded quantifiers.
\end{itemize}

Then let us formulate sequent calculus for first-order logic as follows. We follow terminologies and facts in \cite{Arai2020}, but the reader may also refer to \cite[Ch. 4]{Pohlers2009Prooftheory}.
We use $A,B,C,\cdots$ to denote formulas,  $\Gamma,\Delta,\cdots$ to denote sequents, $s, t$ for terms, and $x,y,z,\cdots$ for free variables.
\begin{definition}
    Sequent calculus $\bfG_1$ comprises the following inference rules:
    \begin{center}
    \begin{longtable}{c c}
        \AxiomC{}
        \RightLabel{Ax}
        \UnaryInfC{$\Gamma,\lnot L,L$}
        \DisplayProof
        &
        \\[2em]
        \AxiomC{$\Gamma,A_0\lor A_1,A_i$}
        \RightLabel{$\lor_i$}
        \UnaryInfC{$\Gamma, A_0\lor A_1$}
        \DisplayProof
        &
        \AxiomC{$\Gamma, A_0\land A_1, A_0$}
        \AxiomC{$\Gamma, A_0\land A_1, A_1$}
        \RightLabel{$\land$}
        \BinaryInfC{$\Gamma, A_0\land A_1$}
        \DisplayProof
        \\[2em]
        \AxiomC{$\Gamma,\exists x A(x),A(t)$}
        \RightLabel{$\exists$}
        \UnaryInfC{$\Gamma,\exists x A(x)$}
        \DisplayProof
        &
        \AxiomC{$\Gamma,\forall x A(x),A(y)$}
        \RightLabel{$\forall$}
        \UnaryInfC{$\Gamma,\forall x A(x)$}
        \DisplayProof
    \end{longtable}
    \end{center}
    Here Ax applies only when $L$ is a literal, and $\forall$ applies only when $y$ does not occur in the lower sequent $\Gamma,\forall x A(x)$ freely. We call $y$ in the $\forall$ rule an \emph{eigenvariable of the inference rule $\forall$}.
\end{definition}
Note that the \emph{cut rule}
\begin{center}
    \AxiomC{$\Gamma, \lnot A$}
    \AxiomC{$A, \Delta$}
    \RightLabel{$\mathrm{Cut}$}
    \BinaryInfC{$\Gamma, \Delta$}
    \DisplayProof
\end{center}
is not part of $\bfG_1$. It is known that $\bfG_1$ with the Cut rule is sound \cite[Proposition 1.1]{Arai2020}, and $\bfG_1$ (even without the Cut rule) is complete:
\begin{theorem}[Completeness theorem, {\cite[Theorem 2.1]{Arai2020}, \cite[Theorem 4.4.1]{Pohlers2009Prooftheory}}]
    If $\bigvee \Gamma$ is valid, then $\bfG_1\vdash \Gamma$.
\end{theorem}

The completeness theorem holds for not only $\bfG_1$ but also a modification of $\bfG_1$ augmented with new inference rules associated with a given theory $T$.
The easiest way to add a new inference associated with $A\in T$ is by adding the rule
\begin{center}
    \AxiomC{$\Gamma,\lnot A$}
    \RightLabel{$\mathrm{Ax}_A$}
    \UnaryInfC{$\Gamma$}
    \DisplayProof
\end{center}
When $A$ takes the form $\forall\vec{y} \left[ \left[\bigwedge_{i<n} \forall \vec{x}_i \bigvee \Phi_i(\vec{x}_i,\vec{y}) \right]\to \bigwedge \Phi(\vec{y})\right]$, we may add the following rule instead:
\begin{center}
    \AxiomC{$\{\Gamma,\Phi_i(\vec{z}_i, \vec{t}) \mid i<n\}$}
    \RightLabel{$\mathrm{Ax}'_A$}
    \UnaryInfC{$\Gamma,\Phi(\vec{t})$}
    \DisplayProof
\end{center}
where $\vec{z}_i$ do not occur free in $\Gamma,\Phi(\vec{t})$. Let us say $\bfG_T \supseteq \bfG_1$ is an \emph{admissible expansion for $T$} if $\bfG_T$ comprises rules of $\bfG_1$ and one of $\mathrm{Ax}_A$ or $\mathrm{Ax}'_A$ for each $A\in T$.
The following theorem follows a proof of \cite[Theorem 2.2]{Arai2020}:
\begin{theorem} \pushQED{\qed} \label{Theorem: Sect4-Expanded cutfree completeness}
    If $T\vDash \bigvee\Gamma$, then $\bfG_T\vdash \Gamma$ for an admissible expansion $\bfG_T$ for $T$. \qedhere 
\end{theorem}

Now let us consider the case $T=\ZFC+\WA_0$.
We consider the following admissible expansion $\bfG_{\WA_0}$: It comprises axioms of $\bfG_1$ and
\begin{center}
\begin{longtable}{c c}
    \AxiomC{$\Gamma,\exists x\in t A(x),s\in t$}
    \AxiomC{$\Gamma,\exists x\in t A(x),A(s)$}
    \RightLabel{$b\exists$}
    \BinaryInfC{$\Gamma,\exists x\in t A(x)$}
    \DisplayProof
    &
    \AxiomC{$\Gamma,y\notin t,A(y)$}
    \RightLabel{$b\forall$ ($y$ free in the lower sequent.)}
    \UnaryInfC{$\Gamma,\forall x\in t A(x)$}
    \DisplayProof
    \\[2em]
    \AxiomC{$\Gamma,t\neq t$}
    \RightLabel{Eq-refl}
    \UnaryInfC{$\Gamma$}
    \DisplayProof
    &
    \AxiomC{$\Gamma,t_0=t_1$}
    \AxiomC{$\Gamma,t_1=t_2$}
    \AxiomC{$\Gamma,t_0\neq t_2$}
    \RightLabel{Eq-trans}
    \TrinaryInfC{$\Gamma$}
    \DisplayProof
    \\[2em]
    \AxiomC{$\Gamma,t_0=t_1$}
    \AxiomC{$\Gamma,s_0=s_1$}
    \AxiomC{$\Gamma,s_0\in t_0$}
    \AxiomC{$\Gamma,s_1\notin t_1$}
    \RightLabel{Eq-rel}
    \QuaternaryInfC{$\Gamma$}
    \DisplayProof
    &
    \AxiomC{$\Gamma,s=t$}
    \AxiomC{$\Gamma,\sfj(s)\neq \sfj(t)$}
    \RightLabel{Eq-ftn}
    \BinaryInfC{$\Gamma$}
    \DisplayProof
    \\[2em]
    \AxiomC{$\Gamma,\lnot A$}
    \RightLabel{$\mathrm{Ax}_\ZFC$ ($A$ is an axiom of $\ZFC$)}
    \UnaryInfC{$\Gamma$}
    \DisplayProof
    &
    \AxiomC{$\Gamma,A(t)$}
    \AxiomC{$\Gamma,\lnot A(\sfj(t))$}
    \RightLabel{Elem ($A\in \Sigma_\infty$)}
    \BinaryInfC{$\Gamma$}
    \DisplayProof
    \\[2em]
    \AxiomC{$\Gamma,\sfj(\dot{\kappa})=\dot{\kappa}$}
    \RightLabel{Nontriv}
    \UnaryInfC{$\Gamma$}
    \DisplayProof
    &
    \AxiomC{$\Gamma,\exists x\in t (A(x)\land x\notin y),y\nsubseteq t, \exists x\in y \lnot A(x)$}
    \RightLabel{$\Delta^j_0$-Sep}
    \UnaryInfC{$\Gamma$}
    \DisplayProof
\end{longtable}
\end{center}
In $b\forall$ and $\Delta^j_0$-Sep, $y$ does not occur free in the lower sequent, $s\subseteq t$ is an abbreviation of $\forall x\in s(x\in t)$, and in $\Delta^j_0$-Sep, $A(x)$ is a $\Delta^j_0(\Sigma_\infty)$-formula.
We call the free variable $y$ the \emph{eigenvariable} of the corresponding inference rule.
By \autoref{Theorem: Sect4-Expanded cutfree completeness}, every $\ZFC+\WA_0$-theorem is a theorem of $\bfG_{\WA_0}$.
Also, observe that for every $\bfG_{\WA_0}$-proof $\pi$ with the end sequent $\Gamma$, every formula occurring in $\pi$ is either $\Delta^j_0(\Sigma_\infty)$-formula or a subformula of a formula in $\Gamma$.

The presence of a partial truth predicate shows the soundness of $\ZFC+\WA_0$. More precisely,
\begin{theorem}[$\ZFC+\WA_0$ + $\Pi^j_1$-Induction] \label{Theorem: Sect4-GWA0 proof soundness}
    Suppose that $\bfG_{\WA_0}\vdash \Gamma(\vec{x})$, where every formula of $\Gamma(\vec{x})$ is $\Delta^j_0(\Sigma_\infty)$ with free variables $\vec{x}$. Then $\forall \vec{a}\in V^{<\omega }\left[\vDash_{\Delta^j_0(\Sigma_\infty)} \bigvee \Gamma(\vec{a})\right]$ if we interpret $\sfj$ by $j$, and $\dot{\kappa}$ by $\kappa=\operatorname{crit} j$.%
    \footnote{We defined $\vDash_{\Delta^j_0(\Sigma_\infty)}$ for formulas in the language without $\dot{\kappa}$. One way to get around this issue is to replace $\dot{\kappa}$ to a fixed free variable, then substitute it with $\kappa$.} 
\end{theorem}
\begin{proof}
    Let us fix a $\bfG_{\WA_0}$-proof $\pi$ with end sequent $\Gamma=\Gamma(\vec{x})$.
    $\pi$ is a finitely branching tree, so it must be finite by K\H onig's lemma.
    Hence we can find a finite enumeration $\langle \Gamma_i \mid i<m\rangle$ of every sequent occurring in $\pi$ such that if $\Gamma_i$ occurs as a hypothesis over $\Gamma_j$ in $\pi$, then $i<j$. (In particular, $\Gamma_0$ must follow from Ax.) The previous observation shows that every formula in $\Gamma_i$ is $\Delta^j_0(\Sigma_\infty)$.
    
    Now we can prove $\forall \vec{a}\in V^{<\omega }\left[\vDash_{\Delta^j_0(\Sigma_\infty)} \bigvee \Gamma_i(\vec{a})\right]$ by $\Pi^j_1$-Induction on $i$:
    The critical but tedious step is that for each inference rule of $\bfG_{\WA_0}$ in which every occurring formula is $\Delta^j_0(\Sigma_\infty)$, if every upper sequent is true under $\vDash_{\Delta^j_0(\Sigma_\infty)}$, then the lower sequent is also true under $\vDash_{\Delta^j_0(\Sigma_\infty)}$.
    The most non-trivial cases are when the rule is either $\exists$ or $\forall$. In this case, however, the principal formula of the inference rule is $\Sigma_\infty$ so that we can proceed with the argument.
\end{proof}

Hence we have the following:
\begin{corollary} \label{Corollary: ZFC WA0 consistency} \pushQED{\qed}
    $\ZFC+\WA_0$ + $\Pi^j_1$-Induction proves, hence $\ZFC+\WA_1$ proves the following:
    \begin{enumerate}
        \item $\ZFC+\WA_0$ is $\Delta^j_0(\Sigma_\infty)$-sound: That is, every $\Delta^j_0(\Sigma_\infty)$-theorem of $\ZFC+\WA_0$ is true.
        \item $\ZFC+\WA_0$ is consistent. \qedhere 
    \end{enumerate}
\end{corollary}

\subsection{Wholeness by numbers, one, two, three...}
In this subsection, we prove that $\ZFC+\WA_{n+1}$ proves the consistency of $\ZFC+\WA_n$. 
Let us consider the sequent calculus $\bfG_{\WA_n}$ obtained from $\bfG_{\WA_0}$ by replacing $\Delta^j_0$-Sep rule to $\Sigma^j_n$-Sep rule, which is obtained from $\Delta^j_0$-Sep rule by allowing $A(x)$ to be $\Sigma^j_n(\Sigma_\infty)$.
Observe that $\ZFC + \WA_0$ + $\Pi^j_1$-Induction proves $\vDash_{\Sigma^j_n(\Sigma_\infty)}$ is $\Sigma^j_n$-definable, so so we may replace the $\Sigma^j_n$-Sep rules to its single instance given by $\vDash_{\Sigma^j_n(\Sigma_\infty)}$.
Moreover, a proof of \autoref{Lemma: SimplifySeparation} tells us we may assume that $A(x)$ has a unique single variable $x$.
Again, it follows that if $\ZFC+\WA_n\vDash \bigvee\Gamma$, then $\bfG_{\WA_n}\vdash \Gamma$.

Now we claim that $\ZFC+\WA_n + \Pi^j_{n+1}$-Induction proves the consistency of $\ZFC+\WA_n$.
More precisely, we have the following:
\begin{theorem}[$\ZFC+\WA_n + \Pi^j_{n+1}$-Induction]
    Let $\Gamma$ be a sequent in which every formula is in $\Sigma^j_n(\Sigma_\infty)\cup \Pi^j_n(\Sigma_\infty)$.
    We define $\Gamma_{\Sigma^j_n(\Sigma_\infty)} \subseteq \Gamma$ is the subsequent comprising every $\Sigma^j_n(\Sigma_\infty)$-formula of $\Gamma$, and define $\Gamma_{\Pi^j_n(\Sigma_\infty)} \subseteq \Gamma$ similarly.
    Suppose that $\bfG_{\WA_n}\vdash\Gamma$. Then we have 
    \begin{equation*} \textstyle
        \forall \vec{a}\in V^{<\omega}\left[
        \vDash_{\underline{\Sigma}^j_n(\Sigma_\infty)} \left( \bigvee \Gamma_{\Sigma^j_n(\Sigma_\infty)} \right)(\vec{a}) \lor \vDash_{\underline{\Pi}^j_n(\Sigma_\infty)} \left( \bigvee \Gamma_{\Pi^j_n(\Sigma_\infty)} \right)(\vec{a}) \right]
    \end{equation*}
    if we interpret $\sfj$ by $j$ and $\dot{\kappa}$ by $\kappa$.
\end{theorem}
\begin{proof}
    The proof is more convoluted than that of \autoref{Theorem: Sect4-GWA0 proof soundness} due to $\Sigma^j_n$-Sep rule:
    The upper sequent of $\Sigma^j_n$-Sep rule has $\exists x\in y \lnot A(x)$, which is not in $\Sigma^j_n(\Sigma_\infty)\cup \Pi^j_n(\Sigma_\infty)$ when $A$ is $\Sigma^j_n(\Sigma_\infty)$.
    We handle this issue by manually substituting an appropriate instance.
    
    Let us fix a $\bfG_{\WA_n}$-proof $\pi$ of $\Gamma$.
    From the previous discussion, we may assume that $\pi$ uses a single instance of $\Sigma^j_n$-Sep for the formula $A(x)$ whose only free variable is $x$.
    Let $\{x_i\mid i<N\}$ be the enumeration of eigenvariables of $\forall$ or $b\forall$ occurring in $\pi$, and $\{y_i\mid i<m\}$ and $\{t_i\mid i<m\}$ be the enumeration of eigenvariables and terms of $\Sigma^j_n$-Sep rule
    \begin{center}
        \AxiomC{$\Gamma,\exists x\in t_i (A(x)\land x\notin y_i),y_i\nsubseteq t_i, \exists x\in y_i \lnot A(x)$}
        \RightLabel{$\Sigma^j_n$-Sep}
        \UnaryInfC{$\Gamma$}
        \DisplayProof
    \end{center}
    By indexing the eigenvariables from the bottommost sequent of $\pi$, we may assume that if $y_{i'}$ occurs in $\Gamma$ of the $\Sigma^j_n$-Sep rule, then $i'<i$.
    We can also find a finite sequence of natural number $\{k_i\mid i<m\}$ such that if $x_k$ occurs in $\Gamma$, then $k<k_i$. We may also assume $\langle k_i\mid i<m\rangle$ is increasing. 
    Then we have that every $t_i$ is of the form $\sfj^{p_i}(v)$ for some $p_i<\omega$ and $v\in \{\dot{\kappa}\}\cup \{x_k\mid k<k_i\}\cup \{y_{i'}\mid i'<i\}$.
    
    Suppose we are given a finite sequence of sets $\vec{a}$.
    Then let us define subsidiary parameters $c_i(\vec{a})$ as follows:
    \begin{itemize}
        \item $c_i(\vec{a}) := j^{p_i}(\kappa)$ when $t_i \equiv \sfj^{p_i}(\dot{\kappa})$.
        \item $c_i(\vec{a}) := j^{p_i}(\vec{a}_{x_k})$ when $t_i \equiv \sfj^{p_i}(x_k)$.
        \item $c_i(\vec{a}) := j^{p_i}(c_{i'}(\vec{a}))$ when $t_i \equiv \sfj^{p_i}(y_{i'})$.
    \end{itemize}
    We set $c(\vec{a}) = \bigcup_{i<m} c_i(\vec{a})$ and $b(\vec{a}) = \{x\in c(\vec{a}) \mid \vDash_{\Sigma^j_n(\Sigma_\infty)} A(x)\}$.
    Then define
    \begin{equation*}
        d_i(\vec{a}) = 
        \begin{cases}
            j^{p_i}(b_{i'}(\vec{a})), & t_i\equiv \sfj^{p_i}(y_{i'}), \\ 
            c_i(\vec{a}), & \text{otherwise,}
        \end{cases}
    \end{equation*}
    and $b_i(\vec{a}) = b(\vec{a}) \cap d_i(\vec{a})$.
    Then we have the following:
    \begin{enumerate}
        \item $\vDash_{\Delta^j_0(\Sigma_\infty)} d_i(\vec{a}) = t_i[\vec{a}]$, 
        \item $d_i(\vec{a})\cup b_i(\vec{a})\subseteq c_i(\vec{a})$, and
        \item $b_i(\vec{a}) = \{x\in d_i(\vec{a}) \mid \vDash_{\Sigma^j_n(\Sigma_\infty)} A(x)\}$.
    \end{enumerate}
    Finally, define $\vec{a}' = \vec{a}[b_0(\vec{a})/y_0,\cdots, b_{m-1}(\vec{a})/y_{m-1}]$.
    
    Before applying the induction, we further assume we fixed an enumeration of sequents occurring in $\pi$ in a way that upper sequents occur earlier than lower sequents in the enumeration. 
    There is no guarantee that every formula occurring in $\pi$ is in $\Sigma^j_n(\Sigma_\infty)\cup \Pi^j_n(\Sigma_\infty)$.
    However, we can still see that the only formula occurring in $\pi$ that is not in $\Sigma^j_n(\Sigma_\infty)\cup \Pi^j_n(\Sigma_\infty)$ is $\exists x\in y_i \lnot A(x)$ for some eigenvariable $y_i$.
    Then we claim the following by $\Pi^j_{n+1}$-Induction on a sequent $\Gamma$ of $\pi$:
    \begin{equation} \label{Equation: Sect4-Sequent Validity} \textstyle
        \vDash_{\underline{\Sigma}^j_n(\Sigma_\infty)} \left( \bigvee \Gamma_{\Sigma^j_n(\Sigma_\infty)} \right)(\vec{a}') \lor \vDash_{\underline{\Pi}^j_n(\Sigma_\infty)} \left( \bigvee \Gamma_{\Pi^j_n(\Sigma_\infty)} \right)(\vec{a}').
    \end{equation}
    ($\Gamma$ may contain formulas not in $\Sigma^j_n(\Sigma_\infty)\cup \Pi^j_n(\Sigma_\infty)$, but it will turn out that we can ignore such formulas.)
    Let us only consider the following non-trivial cases:
    \begin{enumerate}
        \item When $\Gamma$ is the lower sequent of $\Sigma^j_n$-Sep rule:
        By the induction hypothesis, we have either \eqref{Equation: Sect4-Sequent Validity}, or $\exists x\in d_i(\vec{a}) [\vDash_{\Sigma^j_n(\Sigma_\infty)} A(x) \land x\notin b_i(\vec{a})]$, or $b_i(\vec{a})\nsubseteq d_i(\vec{a})$. The latter two cases fail, so we get \eqref{Equation: Sect4-Sequent Validity}.

        \item When $\Gamma$ is the lower sequent of $b\exists$ rule introducing $\exists x\in y_i \lnot A(x)$: 
        \begin{center}
            \AxiomC{$\Gamma,\exists x\in y_i \lnot A(x),t\in y_i$}
            \AxiomC{$\Gamma,\exists x\in y_i \lnot A(x),\lnot A(t)$}
            \RightLabel{$b\exists$}
            \BinaryInfC{$\Gamma,\exists x\in y_i \lnot A(x)$}
            \DisplayProof
        \end{center}
        Let $e=t(\vec{a}')$.
        From the induction hypothesis, we have (\eqref{Equation: Sect4-Sequent Validity} or $e\in b_i(\vec{a})$) and (\eqref{Equation: Sect4-Sequent Validity} or $\vDash_{\Pi^j_n(\Sigma_\infty)}\lnot A(e)$).
        But we have $\lnot( e\in b_i(\vec{a})\land \vDash_{\Pi^j_n(\Sigma_\infty)} \lnot A(e))$, so \eqref{Equation: Sect4-Sequent Validity} holds.
        \qedhere 
    \end{enumerate}
\end{proof}

Hence we have
\begin{corollary} \label{Corollary: ZFC WAn consistency} \pushQED{\qed}
    $\ZFC + \WA_n + \Pi^j_{n+1}$-Induction proves the following:
    \begin{enumerate}
        \item $\ZFC+\WA_n$ is sound for Boolean combinations of $\Sigma^j_n(\Sigma_\infty)$-sentences.
        \item $\ZFC+\WA_n$ is consistent. \qedhere 
    \end{enumerate}
\end{corollary}

We also note that the consistency separation result also shows $\ZFC+\WA$ is not finitely axiomatizable:
\begin{corollary} \label{Corollary: Sect4-WAnotfinite}
    $\ZFC+\WA$ is \emph{not} finitely axiomatizable. 
\end{corollary}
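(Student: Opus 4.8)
The plan is to run the classical Montague-style argument for non-finite-axiomatizability, now powered by the explicit consistency results just established. Suppose, toward a contradiction, that $\ZFC+\WA$ were finitely axiomatizable, hence logically equivalent to a single sentence $\tau$. Since $\ZFC+\WA\vdash\tau$ and first-order proofs are finite, there is a finite set $\Phi$ of axioms of $\ZFC+\WA$ with $\Phi\vdash\tau$; and since $\tau$ in turn proves every axiom of $\ZFC+\WA$, we obtain $\Phi\vdash\ZFC+\WA$. The only axioms of $\ZFC+\WA$ mentioning $j$ are the instances of $\Sigma^j_\infty$-Separation together with the critical point axiom and the elementarity scheme of $\BTEE$; the latter two already belong to $\WA_0$, while each of the finitely many $j$-formulas occurring in the Separation instances of $\Phi$ is logically equivalent to a prenex normal form, hence (padding a $\Pi^j_m$ matrix to $\Sigma^j_{m+1}$ if necessary) to a $\Sigma^j_m$-formula for some standard $m$. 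Letting $n$ be the maximum of these finitely many $m$'s (and at least $1$), every axiom in $\Phi$ is provable in $\ZFC+\WA_n$. Therefore $\ZFC+\WA_n\vdash\Phi\vdash\ZFC+\WA$, so $\ZFC+\WA_n$ and $\ZFC+\WA$ prove exactly the same sentences.

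Next I would invoke the consistency results of the previous section. By \autoref{Corollary: Sect4-WAnplus2 and WAn} (using \autoref{Theorem: Sect3-ReallyWeakModelWA0} together with the Strong Soundness Theorem \autoref{Lemma: Sect3-StrongSoundness} for the base cases), $\ZFC+\WA$ proves $\Con(\ZFC+\WA_n)$; combining this with the conclusion of the previous paragraph, $\ZFC+\WA_n$ \emph{itself} proves $\Con(\ZFC+\WA_n)$. But $\ZFC+\WA_n$ is recursively axiomatizable, interprets enough arithmetic, and --- since $\WA_n$ is a subtheory of $\WA$ and $\ZFC+\WA$ is consistent (the standing assumption, available modulo the usual large-cardinal hypotheses) --- is consistent. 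Gödel's second incompleteness theorem then gives $\ZFC+\WA_n\nvdash\Con(\ZFC+\WA_n)$, the desired contradiction; therefore $\ZFC+\WA$ is not finitely axiomatizable.

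I do not anticipate a genuine obstacle, as the heavy lifting is already contained in \autoref{Corollary: Sect4-WAnplus2 and WAn}. The only step needing a word of care is the reduction of a finite fragment of $\ZFC+\WA$ into $\ZFC+\WA_n$ for a \emph{standard} $n$: one must note that each $j$-formula appearing in the fragment's Separation instances is, by prenex normalization, $\Sigma^j_m$ for some standard $m$, and that the maximum over finitely many standard numbers is again standard (so the rule-of-thumb from the preliminaries, that complexity subscripts are standard, is genuinely used here). It is perhaps also worth remarking explicitly that this is consistent with \autoref{Corollary: Sect4-WAn finite}: each $\ZFC+\WA_n$ is finitely axiomatizable, yet their union $\ZFC+\WA$ is not, precisely because $\ZFC+\WA$ strictly exceeds every $\ZFC+\WA_n$ in consistency strength.
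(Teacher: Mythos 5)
Your proof is correct and follows essentially the same route as the paper's: the paper's argument is exactly that finite axiomatizability would collapse $\ZFC+\WA$ to some $\ZFC+\WA_n$, which would then prove its own consistency via \autoref{Corollary: Sect4-WAnplus2 and WAn}, contradicting G\"odel's second incompleteness theorem. You merely spell out the details the paper leaves implicit (the prenex-normalization step placing a finite fragment inside a standard level $\Sigma^j_n$, and the tacit reliance on the consistency of $\ZFC+\WA$), both of which are handled correctly.
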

\begin{proof}
    If it were, then $\ZFC+\WA$ reduces to $\ZFC+\WA_n$ for some $n$, so $\ZFC+\WA_n$ proves the consistency of itself, a contradiction.
\end{proof}

\subsection{Separating $\Pi^j_n$-Induction}
$\Pi^j_{n+1}$-Induction has a critical role in the previous subsections, and we may ask if we can separate $\Pi^j_n$-Induction by their strength.
To answer the question, we prove that for $m\ge n$, $\ZFC+\WA_n + \Pi^j_{m+1}$-Induction proves the consistency of  $\ZFC+\WA_n + \Pi^j_{m}$-Induction. We fix $m\ge n$ throughout this section, and we may assume $m>n$ since we have considered the case $m=n$.

We need an appropriate sequent calculus for $\ZFC+\WA_n + \Pi^j_{m}$-Induction to prove the theorem.
Let us recall that the statement $x\in\omega$ is $\Delta_0$-expressible in set theory and so we identify $x\in\omega$ with the corresponding $\Delta_0$-expression.
Then let us consider the following rule: 
\begin{center}
    \AxiomC{$\Gamma,A(0)$}
    \AxiomC{$\Gamma,y\notin \omega,\lnot A(y),A(y+1)$}
    \AxiomC{$\Gamma,t\notin \omega,A(t)$}
    \RightLabel{$\Pi^j_m$-Ind}
    \TrinaryInfC{$\Gamma$}
    \DisplayProof
\end{center}
Here $A(x)$ is $\Pi^j_m(\Sigma_\infty)$, and $y$ does not occur free in $\Gamma$. 
In principle, $0$ and $y+1$ are not part of the language of set theory we consider. However, both $0$ and the successor function are $\Delta_0$-definable, and for each $\Sigma^j_m(\Sigma_\infty)$-formula $A(x)$, we can uniformly find a $\Sigma^j_m(\Sigma_\infty)$-formula equivalent to $A(0)$ and $A(y+1)$ respectively. We fix such $\Sigma^j_m(\Sigma_\infty)$-formulas, and apply them to formulate $\Sigma^j_m$-Ind rule.

We can see that if $\ZFC+\WA_n$ + $\Pi^j_m$-Induction proves $\phi$, then $\bfG_{\WA_n}+\text{$\Pi^j_m$-Ind}\vdash\phi$.
Then let us prove the following:
\begin{proposition}[$\ZFC+\WA_n+\Pi^j_{m+1}$-Induction]
    Let $\Gamma$ be a sequent in which every formula is $\Sigma^j_m(\Sigma_\infty)\cup \Pi^j_m(\Sigma_\infty)$.
    If $\bfG_{\WA_n}+\Pi^j_m\mhyphen \mathrm{Ind}\vdash \Gamma$, then the following holds:
    \begin{equation*} \textstyle
        \forall \vec{a}\in V^{<\omega}\left[
        \vDash_{\underline{\Sigma}^j_m(\Sigma_\infty)} \left( \bigvee \Gamma_{\Sigma^j_n(\Sigma_\infty)} \right)(\vec{a}) \lor \vDash_{\underline{\Pi}^j_m(\Sigma_\infty)} \left( \bigvee \Gamma_{\Pi^j_n(\Sigma_\infty)} \right)(\vec{a}) \right].
    \end{equation*}
\end{proposition}
\begin{proof}
    Fix a $\bfG_{\WA_n}+\Pi^j_m\mhyphen \mathrm{Ind}$-proof $\pi$ of $\Gamma$.
    We can see that every formula occurring in $\pi$ is in $\Sigma^j_m(\Sigma_\infty)\cup \Pi^j_m(\Sigma_\infty)$. (We assume $m>n$, so $\Sigma^j_n$-Sep does not cause any complexity issues.)
    Then the desired result follows from the induction argument.
\end{proof}

\begin{corollary} \pushQED{\qed}
    Let $m\ge n$.
    $\ZFC + \WA_n + \Pi^j_{m+1}$-Induction proves the following:
    \begin{enumerate}
        \item $\ZFC+\WA_n$ is sound for Boolean combinations of $\Sigma^j_m(\Sigma_\infty)$-sentences.
        \item $\ZFC+\WA_n + \Pi^j_m$-Induction is consistent. \qedhere 
    \end{enumerate}
\end{corollary}

\printbibliography

\end{document}